\theoremstyle{plain} 
\newtheorem{thm}{Theorem}[section]
\newtheorem{lem}[thm]{Lemma}
\newtheorem{cor}[thm]{Corollary}
\newtheorem{prop}[thm]{Proposition}
\theoremstyle{definition} 
\newtheorem{definition}[thm]{Definition}
\newtheorem{rem}[thm]{Remark}
\newtheorem{exa}[thm]{Example}
\newcommand{\R}{\mathbb{R}}
\newcommand{\N}{\mathbb N}
\newcommand{\ep}{\varepsilon}
\newcommand{\Lim}{\displaystyle \lim} 
\newcommand{\Liminf}{\displaystyle \liminf}
\let\tilde=\widetilde
\newcommand{\eqsp}[1]{{\begin{equation}\begin{aligned}#1\end{aligned}\end{
equation}}}
\title[Well-posedness of Hardy-H\'enon equation]{Optimal well-posedness and forward self-similar solution for the Hardy-H\'enon parabolic equation in critical weighted Lebesgue spaces}
\date\today
\author[N. Chikami, M. Ikeda and K. Taniguchi]{Noboru Chikami, Masahiro Ikeda and Koichi Taniguchi}
\address[N. Chikami]
{
Graduate School of Engineering, 
Nagoya Institute of Technology, 
Gokiso-cho, Showa-ku, Nagoya 
466-8555, Japan.}
\email{chikami.noboru@nitech.ac.jp}
\address[M. Ikeda]
{Faculty of Science and Technology,
Keio University, 
3-14-1 Hiyoshi, Kohoku-ku, Yokohama, 223-8522, Japan/ Center for Advanced Intelligence Project
RIKEN, Japan.}
\email{masahiro.ikeda@keio.jp/masahiro.ikeda@riken.jp}
\address[K. Taniguchi]
{Advanced Institute for Materials Research,
Tohoku University,
2-1-1 Katahira, Aoba-ku, Sendai, 980-8577, Japan.}
\email{koichi.taniguchi.b7@tohoku.ac.jp}
\keywords{Hardy-H\'enon parabolic equation, well-posedness, global existence, nonexistence, self-similar solution}
\begin{document}

\footnote[0]
{2010 {\it Mathematics Subject Classification.}
Primary 35K05; Secondary 35B40;}
\maketitle

\begin{abstract}
The Cauchy problem for the Hardy-H\'enon parabolic equation is studied in the critical and subcritical regime in weighted Lebesgue spaces on the Euclidean space $\mathbb{R}^d$. 
Well-posedness for singular initial data and existence of non-radial forward self-similar solution of the problem are previously shown only for the Hardy and Fujita cases ($\gamma\le 0$) in earlier works. 
The weighted spaces enable us to treat the potential $|x|^{\gamma}$ as an increase or decrease of the weight, thereby we can prove well-posedness to the problem for all $\gamma$ with $-\min\{2,d\}<\gamma$ including the H\'enon case ($\gamma>0$). 
As a byproduct of the well-posedness, the self-similar solutions to the problem are also constructed for all $\gamma$ without restrictions. 
A non-existence result of local solution for supercritical data is also shown. 
Therefore our critical exponent $s_c$ turns out to be optimal in regards to the solvability. 
\end{abstract} 


\section{Introduction}\label{sec:1}

\subsection{Background and setting of the problem}
We consider the Cauchy problem of the Hardy-H\'enon parabolic equation
\begin{equation}\label{HH}
	\begin{cases}
		\partial_t u - \Delta u = |\cdot|^{\gamma} |u|^{\alpha-1} u,
			&(t,x)\in (0,T)\times D, \\
		u(0) = u_0 \in L^q_{s}(\R^d),
	\end{cases}
\end{equation}
where  $T>0,$ $d\in \mathbb{N}$, $\gamma\in \R,$ $\alpha\in \R,$ 
$D:=\R^d$ if $\gamma\ge0$ and $D:=\R^d\setminus\{0\}$ if $\gamma<0.$ 
Here, $\partial_t:=\partial/\partial t$ is the time derivative, 
$\Delta:=\sum_{j=1}^d\partial^2/\partial x_j^2$ is the Laplace operator on $\R^d$, 
$u=u(t,x)$ is the unknown real- or complex-valued function on 
$(0,T)\times \mathbb R^d$, and $u_0=u_0(x)$ is a prescribed real- or 
complex-valued function on $\mathbb R^d$. 
In this paper, we assume that the initial data $u_0$ belongs to weighted Lebesgue spaces $L^q_s(\R^d)$ given by
\[L^q_s(\R^d):=\left\{ f \in \mathcal{M} (\R^d) \,;\, 
	\|f\|_{L^q_s}
	< \infty \right\}
\] endowed with the norm
\[
	\|f\|_{L^q_s} := \left(\int_{\R^d} ( |x|^s |f(x)|)^q \, dx \right)^\frac1{q},
\]
where $s\in \R$ and $q\in [1,\infty]$ and $\mathcal{M} (\R^d)$ denotes the set of all
Lebesgue measurable functions on $\R^d$. 
We express the time-space-dependent function 
$u$ as $u(t)$ or $u(t,x)$ depending on circumstances. 
We introduce a exponent $\alpha_F(d,\gamma)$ given by
\[
    \alpha_F(d,\gamma):=1+\frac{2+\gamma}{d},
\]
which is often referred as the {\it Fujita exponent} and is known to divide the existence and 
nonexistence of positive global solutions (See \cite[Theorem 1.6]{Qi1998}). 

The equation \eqref{HH} with $\gamma<0$ is known as a 
{\it Hardy parabolic equation} while that with $\gamma>0$ 
is known as a {\it H\'enon parabolic equation}. 
The elliptic part of \eqref{HH}, that is, 
\begin{equation}\nonumber
    -\Delta \phi=|x|^{\gamma}|\phi|^{\alpha-1}\phi,\ \ \ x\in \R^d,
\end{equation}
was proposed by H\'enon as a model to study the rotating 
stellar systems (see \cite{H-1973}), 
and has been extensively studied in the mathematical context, 
especially in the field of nonlinear analysis and variational methods 
(see \cite{GhoMor2013} for example). 
The case $\gamma=0$ corresponds to a heat equation with 
a standard power-type nonlinearity, often called the {\it Fujita equation}, 
which has been extensively studied in various directions. 
Regarding well-posedness of the Fujita equation ($\gamma=0$) in Lebesgue 
spaces, we refer to \cites{Wei1979, Wei1980, Gig86}, among many. 
Concerning the global dynamics and asymptotic behaviors, 
we refer to \cites{Ish2008,IT-arxiv,CIT-arxiv} for the Fujita 
and Hardy cases of \eqref{HH} with Sobolev-critical exponents. 
Articles \cites{HisIsh2018, HisTak-arxiv} give definitive results on 
the optimal singularity of initial data to assure the solvability for $\gamma\le 0.$ 
In \cite{Tay2020}, unconditional uniqueness has been established for the Hardy case $\gamma<0.$ Concerning earlier conditional uniqueness when $\gamma<0$, 
we refer to \cites{BenTayWei2017, Ben2019}. 
Lastly, we refer to \cite{Maj-arxiv} for the analysis of the problem 
\eqref{HH} with an external forcing term in addition to the nonlinear term. 

Let us recall that the equation \eqref{HH} is invariant under the scale transformation 
\begin{equation}\label{scale}
u_{\lambda}(t,x) 
:= \lambda^{\frac{2+\gamma}{\alpha-1}} u(\lambda^2 t, \lambda x), 
	\quad \lambda>0.
\end{equation}
More precisely, if $u$ is the classical solution to \eqref{HH}, 
then $u_{\lambda}$ defined as above also solves the equation 
with the rescaled initial data $ \lambda^{\frac{2+\gamma}{\alpha-1}} u_0(\lambda x).$
Under \eqref{scale}, the $L^q_{s}(\R^d)$-norm scales as follows:
$\|u_\lambda (0)\|_{L^q_{s}} 
	= \lambda^{-s+\frac{2+\gamma}{\alpha-1}-\frac{d}{q}} \|u(0)\|_{L^q_{s}}.$ 
We say that the space $L^q_{s}(\R^d)$ is (scale-){\sl critical} if $s=s_c$ with 
\begin{equation}\label{d:sc}
	s_c=s_c(q)= s_c(d,\gamma,\alpha,q) := \frac{2+\gamma}{\alpha-1} - \frac{d}{q}, 
\end{equation}
{\sl subcritical} if $s<s_c,$ and {\sl supercritical} if $s>s_c.$ 
In particular, when $s=s_c = 0,$ $L^{\frac{d(\alpha-1)}{2+\gamma}}(\R^d)$ 
is a critical Lebesgue space. 

One of our purposes in this article is to establish well-posedness results in the critical and subcritical cases ($s\le s_c$) for all the range of the parameter $\gamma$ such that 
$-\min\{2,d\} < \gamma,$ including the H\'enon case ($\gamma>0$). 
In terms of well-posedness in function spaces containing sign-changing singular data, 
the equation \eqref{HH} has been studied mainly for $\gamma<0$ (Hardy case). 
As far as we know, there has been no result concerning well-posedness in the sense of Hadamard (Existence, uniqueness and continuous dependency) of the H\'enon parabolic equation $\gamma>0$ for sign-changing singular data. 
For the Hardy and Fujita cases that are well-studied, our results provide 
well-posedness in new function spaces (See Remark \ref{r:HH.LWP}). 
We stress that the use of weighted spaces enables us to treat the equations 
for all $\gamma$ in a unified manner. 

Our second purpose of this article is to 
prove the existence of forward self-similar solutions for all of 
Hardy, Fujita and H\'enon cases, without restrictions on the exponent $\alpha.$ 
A forward self-similar solution is a solution such that $u_{\lambda} = u$ for all $\lambda>0,$ 
where $u_{\lambda}$ is as in \eqref{scale}. 
In \cite[Lemma 4.4]{Wan1993}, 
the existence of radially symmetric self-similar solutions for 
$d\ge3$, $\gamma>-2$ and $\alpha\ge1+\frac{2(2+\gamma)}{d-2}$ 
is established. 
Later, the case $\alpha_F(d,\gamma)<\alpha<1+\frac{2(2+\gamma)}{d-2}$ 
is treated in \cite{Hir2008} under some additional restriction on $\gamma,$ 
namely $\gamma\le 0$ for $d\ge4$ and $\gamma\le \sqrt{3}-1$ for $d=3.$   
In \cite[Theorem 1.4]{BenTayWei2017}, the existence of self-similar solutions that are not 
necessarily radially symmetric has been proved for all $\alpha>\alpha_F(d,\gamma),$ 
but only for the Hardy case $\gamma<0$ (See also \cite{Chi2019}). 
Our result (Theorem \ref{t:HH.self.sim}) covers all the previous results and asserts the existence of non-radial forward self-similar solutions for $\gamma$ and $\alpha$ 
such that $-\min(2,d)<\gamma$ and $\alpha > \alpha_F(d,\gamma)$. 

In earlier works, the crux of the matter has been 
the handling of the singular potential $|x|^{\gamma}.$ 
If $\gamma<0$, the conventional methods are to regard the potential 
$|x|^{\gamma}$ as a function belonging either to the Lorentz space $L^{\frac{d}{-\gamma},\infty}(\R^d)$ 
(\cites{BenTayWei2017, Tay2020}) 
or the homogeneous Besov space $\dot B^{\frac{d}{q}+\gamma}_{q,\infty}(\R^d),$ 
$1\le q \le \infty$ (\cite{Chi2019}), 
and apply appropriate versions of H\"older's inequality to establish suitable 
heat kernel estimates. In contrast to their previous works, in this article, 
we treat the potential $|x|^{\gamma}$ as the increase or decrease of the order of 
the weight in $L^q_s(\R^d)$-norms, thereby covering the H\'enon case ($\gamma>0$) as well. 
In this regard, the introduction of the weighted spaces is crucial to our results. 
Indeed, if the data only belongs to the critical Lebesgue space, then we 
may only treat the Hardy case ($\gamma<0$) in our main theorem 
(See Remark \ref{r:HH.LWP} below). 
The proofs of the well-posedness results rely on Banach's fixed point theorem. 
The essential ingredient in the proof of various nonlinear estimates is the following linear estimate for the heat semigroup $\{e^{t\Delta}\}_{t>0}$ on weighted Lebesgue spaces: 
\begin{equation}\nonumber
	\| e^{t\Delta} f\|_{L^q_{s'}} 
	\le C t^{-\frac{d}2 (\frac1{p}-\frac1{q}) - \frac{s-s'}{2} } 	
		\| f\|_{L^p_{s}},
\end{equation}
(see Lemma \ref{l:wLpLq} for precise statement), which is known in the literatures such as \cite{Tsu2011} except for the end-point cases. 
In this article, we first extend the above estimate to the end-point cases $(i)$ $1<p<q=\infty,$ $(ii)$ $p=q=1,$ $(iii)$ $1=p<q<\infty,$ 
$(iv)$ $p=q=\infty$ and $(v)$ $(p,q)=(1,\infty)$. 

To complete the picture of the admissible range of our well-posedness results, we also discuss the non-existence of positive distributional local solutions to \eqref{HH} 
for suitable supercritical data $u_0 \in L^q_{s}(\R^d)$ with $s>s_c.$ 

\subsection{Main results}
In order to state our results, we introduce the following auxiliary function spaces. Let $\mathscr{D}'([0,T)\times\R^d)$ be the space of distributions on $[0,T)\times\R^d$. 
\begin{definition}[Kato class]
\label{def:Kato}
Let $T \in (0,\infty],$ $s\in\R$ and $q\in [1,\infty].$ 
\begin{enumerate}[(1)]
\item In the critical regime, i.e. $\tilde s=s_c$, where $s_c$ is defined by \eqref{d:sc}, for $s<\tilde s$, the space $\mathcal{K}^{s}(T)$ is defined by 
\begin{equation}\nonumber
   \mathcal{K}^{s}(T)
   :=\left\{u\in \mathscr{D}'([0,T)\times\R^d) \,;\, 
   	\|u\|_{\mathcal{K}^{s}(T')}
   <\infty\ \text{for any } T' \in (0,T)\right\}
\end{equation}
endowed with a norm
\[
\|u\|_{\mathcal K^{s}(T)}
	:=\sup_{0\le t\le T}t^{\frac{s_c -s}{2}} \|u(t)\|_{L^q_s}.
\]
We simply write $\mathcal{K}^{s}=\mathcal{K}^{s}(\infty)$ 
when $T=\infty,$ if it does not cause confusion. 
\item In the subcritical regime, i.e. $\tilde s<s_c$, for $s<\tilde s$, the space $\tilde{\mathcal{K}}^{s}(T)$ is defined by 
\begin{equation}\nonumber
   \tilde{\mathcal{K}}^{s}(T)
   :=\left\{u\in \mathscr{D}'([0,T)\times\R^d) \,;\, 
   	\|u\|_{\tilde{\mathcal{K}}^{s}(T')}
   <\infty\ \text{for any } T' \in (0,T)\right\}
\end{equation}
endowed with a norm
\[
\|u\|_{\tilde{\mathcal{K}}^{s}(T)}
	:=\sup_{0\le t\le T}t^{\frac{\tilde s -s}{2}} \|u(t)\|_{L^q_s}.
\]
\end{enumerate}
\end{definition}
For $t\in \R_+$, we introduce the heat kernel $g_t:\R^d\rightarrow \R_+$ given by
\begin{equation}\label{d:h.krnl}
	g_t(x) := (4\pi t)^{-\frac{d}{2}} e^{-\frac{|x|^2}{4t}}, \ x \in\R^d. 
\end{equation}
We denote by $\{e^{t\Delta}\}_{t
\ge 0}$ the free heat semigroup defined by 
\[
(e^{t\Delta} \varphi) (x) := (g_t \ast \varphi) (x)
\]
for $\varphi \in L^1_{loc}(\R^d),$ where $\ast$ denotes the convolution with respect to the space variable. Let $\mathcal{S}'(\R^d)$ denotes the space of the Schwarz distributions.
For $\varphi \in \mathcal{S}'(\R^d)$, $e^{t\Delta}\varphi$ 
is defined by duality. 

In what follows, we denote by $C_0^\infty(\R^d)$ the space of all smooth functions 
with compact support. We also denote by $\mathcal{L}^q_s (\R^d)$ 
the closure of $C_0^\infty(\R^d)$ with respect to the topology of $L^q_s (\R^d).$ 
Next we give a definition of mild solution as follows. 
\begin{definition}[Mild solution]\label{def:sol-A}
Let $T \in (0,\infty]$, $\tilde s\le s_c$ and $u_0 \in L^q_{\tilde s} (\R^d)$. Let $Y := \mathcal{K}^s(T)$ if $\tilde s = s_c$ 
and $Y := \tilde{\mathcal{K}}^{s}(T)$ if $\tilde s<s_c.$
A function $u : [0,T] \times \R^d \to \mathbb C\ \text{or}\ \R$ is called an 
$L^q_{\tilde s} (\R^d)$-mild solution to \eqref{HH} with initial data 
$u(0)=u_0$ if it satisfies 
$u\in C([0,T]; L^q_{\tilde s} (\R^d)) \cap Y$
and the integral equation 
\begin{equation}\label{integral-eq}
u(t,x) = e^{t\Delta} u_0(x) 
	+ \int_0^t e^{(t-\tau)\Delta} 
	\left\{ |\cdot|^{\gamma} |u(\tau,\cdot)|^{\alpha-1}u(\tau, \cdot)\right\}(x) \, d\tau
\end{equation}
for any $t \in [0,T]$ and almost everywhere $x \in \R^d$.
The time $T$ is said to be the maximal existence time, which is denoted 
by $T_m$, if the solution cannot be extended beyond $[0,T).$ 
More precisely, 
\begin{equation}\label{d:Tm}
	T_m = T_m (u_0) := \sup \left\{T>0 \,;\, 
		\left.\begin{aligned}&\text{There exists } \text{ a unique solution $u$ of \eqref{HH}} \\
			&\text{in } C([0,T]; L^q_{\tilde s}(\R^d)) \cap Y 
			\text{ with initial data $u_0$}
		\end{aligned}\right. \right\}.
\end{equation}
We say that $u$ is global in time if $T_m = + \infty$ 
and that $u$ blows up in a finite time otherwise. 
Moreover, we say that $u$ is dissipative if $T_m = + \infty$ and 
\[
	\lim_{t\to\infty} \|u(t)\|_{L^q_{\tilde s}} = 0. 
\]
\end{definition}
The following is one of our main results on local well-posedness of \eqref{HH} in the critical space $L^q_{s_c}(\R^d)$. 
\begin{thm}[Well-posedness in the critical space]	\label{t:HH.LWP}
Let $d\in\mathbb{N},$ $\gamma\in\R$ and $\alpha\in\R$ satisfy 
\begin{equation}\label{t:HH.LWP.c0}
	\gamma> -\min(2,d)
		\quad\text{and}\quad
	\alpha> \alpha_F(d,\gamma).
\end{equation}
Let $q\in [1,\infty]$ be such that 
\begin{equation}\label{t:HH.LWP.c1}
	\alpha\le q \le \infty 
		\quad\text{and}\quad
	\frac1{q} < \min \left\{ \frac{2}{d(\alpha-1)}, \,
		\frac{2}{d(\alpha-1)} + \frac{(d-2)\alpha - d -\gamma}{d(\alpha-1)^2}\right\}
\end{equation}
and let $s \in \R$ be such that 
\begin{equation}\label{t:HH.LWP.c2}
	s_c - \frac{d(\alpha-1)}{\alpha} \left(\frac{2}{d(\alpha-1)} - \frac1{q} \right) \le s
	< \min \left\{ s_c, \, s_c + \frac{(d-2)\alpha - d -\gamma}{\alpha(\alpha-1)} \right\}. 
\end{equation}
Then the Cauchy problem \eqref{HH} is locally well-posed in $L^q_{s_c}(\R^d)$ for arbitrary data $u_0\in L^q_{s_c}(\R^d)$ and globally well-posed for small data $u_0\in L^q_{s_c}(\R^d)$. 
More precisely, the following assertions hold. 
\begin{enumerate}[$(i)$]
\item {\rm (}Existence{\rm )} 
For any $u_0 \in L^q_{s_c}(\R^d)$ with $q <\infty$ 
(Replace $L^\infty_{s_c}(\R^d)$ with 
$\mathcal{L}^\infty_{s_c}(\R^d)$ when $q = \infty$), 
there exist a positive number $T$ and an $L^q_{s_c}(\R^d)$-mild solution 
$u$
\ to \eqref{HH} satisfying 
\begin{equation}\label{t:HH.LWP.est} 
	\|u\|_{\mathcal{K}^s(T)} 
		\le 2 \|e^{t\Delta} u_0 \|_{\mathcal{K}^s(T)}. 
\end{equation}
Moreover, the solution can be extended to the maximal interval $[0,T_m),$ 
where $T_m$ is defined by \eqref{d:Tm}. 
\item {\rm (}Uniqueness{\rm )} 
Let $T>0.$ If $u, v \in \mathcal{K}^s(T)$ satisfy 
\eqref{integral-eq} with $u(0) = v(0)=u_0 \in L^q_{s_c}(\R^d)$ 
(Replace $L^\infty_{s_c}(\R^d)$ with $\mathcal{L}^\infty_{s_c}(\R^d)$ when $q=\infty$), 
then $u=v$ on $[0,T].$
\item {\rm (}Continuous dependence on initial data{\rm )}  
Let $u$ and $v$ be the $L^q_{s_c}(\R^d)$-mild solutions constructed in (i) with given initial data $u_0$ and $v_0$ respectively. 
Let $T(u_0)$ and $T(v_0)$ be the corresponding existence times. 
Then there exists a constant $C$ depending on $u_0$ and $v_0$ such that 
the solutions $u$ and $v$ satisfy 
\begin{equation}\nonumber	
	\|u-v\|_{L^\infty(0,T;L^q_{s_c}) \cap \mathcal{K}^s(T)} 
	\le C \|u_0-v_0\|_{L^q_{s_c}}
\end{equation}
for some $T\le \min\{T(u_0), T(v_0)\}.$ 
\item {\rm (}Blow-up criterion{\rm )} 
If $u$ is an $L^q_{s_c}(\R^d)$-mild solution constructed in the assertion $(i)$ and 
$T_m<\infty,$ then $\|u\|_{\mathcal{K}^s(T_m)}=\infty.$
\item {\rm (}Small data global existence and dissipation{\rm )} 
There exists $\ep_0>0$ depending only on $d,\gamma,\alpha,q$ and $s$ such that if $u_0 \in \mathcal{S}'(\R^d)$ satisfies 
$\|e^{t\Delta}u_0\|_{\mathcal{K}^s}<\ep_0,$ then $T_m=\infty$ and 
$\|u\|_{\mathcal{K}^s} \le 2\ep_0.$ Moreover, the solution $u$ is dissipative. In particular, if $\|u_0\|_{L^p_{s_c}}$ is sufficiently small, then 
$\|e^{t\Delta}u_0\|_{\mathcal{K}^s}<\ep_0.$ 
\end{enumerate}
\end{thm}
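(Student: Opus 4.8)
The plan is to solve the integral equation \eqref{integral-eq} by Banach's fixed point theorem in the critical Kato space $\mathcal{K}^s(T)$, exploiting the fact that, in the weighted framework, multiplication by the potential $|x|^{\gamma}$ acts as a pure shift of the weight index. Writing $\Phi(u)(t):=e^{t\Delta}u_0+\mathcal{N}(u)(t)$ with $\mathcal{N}(u)(t):=\int_0^t e^{(t-\tau)\Delta}\{|\cdot|^{\gamma}|u(\tau)|^{\alpha-1}u(\tau)\}\,d\tau$, the central observation is the algebraic identity
\[
\bigl\| |\cdot|^{\gamma}|u|^{\alpha-1}u \bigr\|_{L^{q/\alpha}_{\alpha s-\gamma}} = \|u\|_{L^q_s}^{\alpha},
\]
which follows at once by writing $|x|^{\alpha s}|u|^{\alpha}=(|x|^{s}|u|)^{\alpha}$ and reading off the $L^{q/\alpha}$-norm. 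Note that $\alpha\le q$ is exactly what guarantees $q/\alpha\ge 1$, so that $L^{q/\alpha}_{\alpha s-\gamma}(\R^d)$ is a genuine weighted Lebesgue space; the nonlinearity thus maps $L^q_s$ boundedly into it, and I would feed this source space into the weighted heat-semigroup estimate of Lemma~\ref{l:wLpLq} with $(p,q,s,s')=(q/\alpha,\,q,\,\alpha s-\gamma,\,s)$.

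First I would record the linear bound: applying Lemma~\ref{l:wLpLq} from $L^q_{s_c}$ to $L^q_s$ gives $t^{(s_c-s)/2}\|e^{t\Delta}u_0\|_{L^q_s}\le C\|u_0\|_{L^q_{s_c}}$, hence $\|e^{t\Delta}u_0\|_{\mathcal{K}^s}\le C\|u_0\|_{L^q_{s_c}}$. Then, combining the identity above with Lemma~\ref{l:wLpLq} and the pointwise bound $\|u(\tau)\|_{L^q_s}\le\tau^{-(s_c-s)/2}\|u\|_{\mathcal{K}^s}$, the Duhamel term obeys
\[
\|\mathcal{N}(u)(t)\|_{L^q_s}\le C\|u\|_{\mathcal{K}^s}^{\alpha}\int_0^t (t-\tau)^{-\beta}\,\tau^{-\frac{\alpha(s_c-s)}{2}}\,d\tau,\qquad \beta:=\frac{d(\alpha-1)}{2q}+\frac{(\alpha-1)s-\gamma}{2}.
\]
A direct computation using $s_c=\frac{2+\gamma}{\alpha-1}-\frac dq$ shows the exponents conspire so that the Beta integral produces exactly $t^{-(s_c-s)/2}$, giving the scale-invariant estimate $\|\mathcal{N}(u)\|_{\mathcal{K}^s}\le C\|u\|_{\mathcal{K}^s}^{\alpha}$; convergence of the integral is ensured by $\beta<1$ and $\frac{\alpha(s_c-s)}{2}<1$. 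The upper bounds on $1/q$ in \eqref{t:HH.LWP.c1} and the two-sided restriction on $s$ in \eqref{t:HH.LWP.c2} are precisely the admissibility conditions that legitimize both applications of Lemma~\ref{l:wLpLq} and guarantee these two inequalities. The companion difference estimate $\|\mathcal{N}(u)-\mathcal{N}(v)\|_{\mathcal{K}^s}\le C(\|u\|_{\mathcal{K}^s}^{\alpha-1}+\|v\|_{\mathcal{K}^s}^{\alpha-1})\|u-v\|_{\mathcal{K}^s}$ follows from the elementary inequality $\bigl||a|^{\alpha-1}a-|b|^{\alpha-1}b\bigr|\le C(|a|^{\alpha-1}+|b|^{\alpha-1})|a-b|$ together with Hölder in the weighted norm, splitting the weight as before.

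Granting these estimates, the contraction argument is routine. On the ball $\{\|u\|_{\mathcal{K}^s(T)}\le R\}$ with $R:=2\|e^{t\Delta}u_0\|_{\mathcal{K}^s(T)}$, I would choose $R$ small enough that $CR^{\alpha-1}<\tfrac12$, so $\Phi$ is a self-map and a contraction; its fixed point is the required mild solution, and \eqref{t:HH.LWP.est} holds. Part~$(v)$ is the variant with $T=\infty$, invoking the hypothesis $\|e^{t\Delta}u_0\|_{\mathcal{K}^s}<\ep_0$ directly, with its last assertion immediate from the linear bound $\|e^{t\Delta}u_0\|_{\mathcal{K}^s}\le C\|u_0\|_{L^q_{s_c}}$ and dissipation following from an approximation argument showing $\|u(t)\|_{L^q_{s_c}}\to 0$. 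Parts~$(ii)$–$(iv)$ are consequences of the difference estimate: uniqueness in all of $\mathcal{K}^s(T)$ by a standard continuation argument, continuous dependence by applying it to two solutions, and the blow-up criterion by restarting the solution near $T_m$.

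The step I would spend the most care on is the self-map property for \emph{arbitrary} (not merely small) critical data in part~$(i)$: this requires $\|e^{t\Delta}u_0\|_{\mathcal{K}^s(T)}\to 0$ as $T\to 0$, which is \emph{not} implied by the mere boundedness recorded above. I would prove it by first verifying, for $u_0\in C_0^\infty(\R^d)$, that the strict inequality $s<s_c$ leaves enough smoothing room that $t^{(s_c-s)/2}\|e^{t\Delta}u_0\|_{L^q_s}\to 0$, and then passing to general data by density of $C_0^\infty(\R^d)$ in $L^q_{s_c}(\R^d)$ when $q<\infty$ and in $\mathcal{L}^\infty_{s_c}(\R^d)$ when $q=\infty$. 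This density step is exactly why the endpoint $L^\infty_{s_c}$ must be replaced by its subspace $\mathcal{L}^\infty_{s_c}$, and the same approximation simultaneously yields the continuity $u\in C([0,T];L^q_{s_c})$ up to $t=0$.
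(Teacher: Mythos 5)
Your overall architecture coincides with the paper's: the same fixed-point scheme in $\mathcal{K}^s(T)$, the same identity $\||\cdot|^{\gamma}|u|^{\alpha-1}u\|_{L^{q/\alpha}_{\alpha s-\gamma}}=\|u\|_{L^q_s}^{\alpha}$ fed into the weighted smoothing estimate of Lemma~\ref{l:wLpLq}, the beta-function bookkeeping, and the density argument giving $\|e^{t\Delta}u_0\|_{\mathcal{K}^s(T)}\to0$ as $T\to0$ (which you correctly single out as the crucial step for arbitrary critical data). Parts $(i)$, $(iii)$, $(v)$ and the restart argument for $(iv)$ go through essentially as you describe.

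The genuine gap is in part $(ii)$. The statement asserts uniqueness for \emph{any} $T>0$ and for \emph{any} two elements of $\mathcal{K}^s(T)$ solving \eqref{integral-eq}, with no restriction on the size of $\|u\|_{\mathcal{K}^s(T)}$. Your proposed tool is the difference estimate $\|N(u)-N(v)\|_{\mathcal{K}^s(T)}\le C(\|u\|_{\mathcal{K}^s(T)}^{\alpha-1}+\|v\|_{\mathcal{K}^s(T)}^{\alpha-1})\|u-v\|_{\mathcal{K}^s(T)}$ plus ``a standard continuation argument.'' The continuation step at an interior time $T^*>0$ is indeed fine (there $\|u(\cdot+T^*)\|_{\mathcal{K}^s(\delta)}\lesssim \delta^{(s_c-s)/2}(T^*)^{-(s_c-s)/2}\|u\|_{\mathcal{K}^s(T)}\to0$), but the \emph{initial} step fails: because the $\mathcal{K}^s$-norm is scale-invariant, shrinking the time interval produces no smallness, and for an arbitrary solution $u\in\mathcal{K}^s(T)$ one cannot conclude $\|u\|_{\mathcal{K}^s(T')}\to0$ as $T'\to0$ — the a priori inequality $a(T')\le\|e^{t\Delta}u_0\|_{\mathcal{K}^s(T')}+C_0\,a(T')^{\alpha}$ only forces the limit of $a(T')$ to be either $0$ or at least $(1/C_0)^{1/(\alpha-1)}$, and nothing rules out the second alternative. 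So the naive estimate only yields uniqueness in a small ball, not the stated result. The paper closes this gap by decomposing $F(u)-F(v)$ around $e^{t\Delta}u_0$ (the terms $I_1,I_2,I_3$) and proving the interpolated contraction estimate \eqref{l:Kato.est2} with a parameter $\theta\in(0,1)$, so that the contraction coefficient is controlled by $\|e^{t\Delta}u_0\|_{\mathcal{K}^s(T)}^{\alpha-1}+\|u-e^{t\Delta}u_0\|_{L^\infty(0,T;L^q_{s_c})}^{(1-\theta)(\alpha-1)}+\|v-e^{t\Delta}u_0\|_{L^\infty(0,T;L^q_{s_c})}^{(1-\theta)(\alpha-1)}$; each of these does vanish as $T\to0$ (the first by density, the others by the critical-norm stability estimate of Lemma~\ref{l:crt.est}). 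Relatedly, you attribute the lower bound on $s$ in \eqref{t:HH.LWP.c2} to the admissibility of the Kato-norm estimates, but it is in fact the strictly stronger condition $s\ge\frac{s_c+\gamma}{\alpha}$ required by Lemma~\ref{l:crt.est} to recover $u-e^{t\Delta}u_0\in L^\infty(0,T;L^q_{s_c})$ — an estimate your outline needs both for the continuity claim in $(i)$ and, as just explained, for the uniqueness in $(ii)$.
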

\begin{rem}[Optimality of the power $\alpha$ for the nonlinearity]
By the blow-up result in \cite{Qi1998}, 
the condition $\alpha>\alpha_F(d,\gamma)$ is known to be optimal. 
Indeed, if $\alpha \le \alpha_F(d,\gamma)$, then the solutions of \eqref{HH}
with positive initial data blow up in a finite time. 
\end{rem}
\begin{rem}[Uniqueness $(ii)$]
In $(ii)$, $T$ is arbitrary and there is no restriction on the size of the quantity $\|u\|_{\mathcal{K}^s(T)}.$ We note that this uniqueness result concerns a so-called {\sl conditional} uniqueness since we can prove that $u\in \mathcal{K}^s(T)$ is a solution to \eqref{HH} if and only if 
$u \in C([0,T] ; L^q_{s_c}(\R^d)) \cap \mathcal{K}^s(T)$ is a solution to \eqref{HH}, 
provided that $u_0 \in L^q_{s_c}(\R^d).$ See Remark \ref{r:crt.est2} below. 
We note that for the Hardy case, unconditional uniqueness has been established by \cite{Tay2020} in the Lebesgue framework. 
\end{rem}
\begin{exa}[Small data global existence $(v)$]
We give a typical example of the initial data $u_0$ 
satisfying the assumptions in $(v)$ : 
$u_0 \in L^1_{loc}(\R^d)$ such that 
$|u_0(x)| \le c |x|^{-\frac{2+\gamma}{\alpha-1}}$ for almost all $x\in\R^d,$ 
where $c$ is a sufficiently small constant. 
This initial data in particular generates a self-similar solution. 
See Theorem \ref{t:HH.self.sim} below. 
\end{exa}
\begin{rem}[New contributions for $\gamma\neq0$]
\label{r:HH.LWP}
For the Hardy case $\gamma > 0,$ Theorem \ref{t:HH.LWP} is new 
concerning sign-changing solutions for singular initial data. 
Theorem \ref{t:HH.LWP} also gives a new result in the Hardy case ($\gamma<0$). 
In particular, when $s_c \equiv 0$, that is, $q = \frac{d(\alpha-1)}{2+\gamma}$, the critical space is the usual Lebesgue space 
$L^{\frac{d(\alpha-1)}{2+\gamma}}(\R^d).$ 
Theorem \ref{t:HH.LWP} gives a new well-posedness result in the usual Lebesgue space $L^{\frac{d(\alpha-1)}{2+\gamma}}(\R^d)$ for $d\ge2$ and $-2<\gamma<0$. 
\end{rem}
\begin{rem}
We note that $s_c$ is always positive when $\gamma>0$ 
while $s_c$ can be either negative or non-negative. In other words, 
the initial data $u_0$ must have a stronger decay at infinity when $\gamma>0.$ 
\end{rem}

We next discuss global existence of forward self-similar solutions to \eqref{HH}. 
As mentioned earlier, the result below is not known in the literature for large $\gamma>0.$ 
\begin{thm}[Existence of forward self-similar solutions]	\label{t:HH.self.sim}
Let $d\in\mathbb{N},$ $\gamma\in\R$ and $\alpha\in\R$ satisfy \eqref{t:HH.LWP.c0}. 
Let $\varphi(x) := \omega(x) |x|^{-\frac{2+\gamma}{\alpha-1}},$ 
where $\omega\in L^\infty(\R^d)$ is homogeneous of degree 0 and 
$\|\omega\|_{L^\infty}$ is sufficiently small so that $\|e^{t\Delta}\varphi\|_{\mathcal{K}^s}<\varepsilon_0$, where $\varepsilon_0$ appears in Theorem \ref{t:HH.LWP}.
Then there exists a self-similar solution $u_\mathcal{S}$ of 
\eqref{HH} with the initial data $\varphi$ such that $u_\mathcal{S}(t) \to \varphi$ in $\mathcal{S}'(\R^d)$ as $t\to0.$ 
\end{thm}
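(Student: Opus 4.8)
The plan is to read off the self-similar solution from the small-data global theory of Theorem~\ref{t:HH.LWP}, exploiting that the prescribed datum is exactly scale-invariant. Set $\beta:=\frac{2+\gamma}{\alpha-1}$, so that $\varphi(x)=\omega(x)|x|^{-\beta}$ is homogeneous of degree $-\beta$. The assumptions \eqref{t:HH.LWP.c0} force $0<\beta<d$: positivity follows from $2+\gamma>0$ (a consequence of $\gamma>-\min(2,d)$), and $\beta<d$ is equivalent to $\alpha>\alpha_F(d,\gamma)$. Hence $\varphi\in L^1_{\mathrm{loc}}(\R^d)$ and decays at infinity, so $\varphi\in\mathcal S'(\R^d)$ and $e^{t\Delta}\varphi$ is well defined. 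By hypothesis $\|e^{t\Delta}\varphi\|_{\mathcal K^s}<\varepsilon_0$ (a bound produced for small $\|\omega\|_{L^\infty}$ by a weak-type version of the smoothing estimate in Lemma~\ref{l:wLpLq}, together with the scale invariance of the $\mathcal K^s$-norm, which makes $t^{\frac{s_c-s}{2}}\|e^{t\Delta}\varphi\|_{L^q_s}$ independent of $t$). Part~$(v)$ of Theorem~\ref{t:HH.LWP} then provides a global mild solution $u_{\mathcal S}$ of \eqref{integral-eq} with datum $\varphi$ satisfying $\|u_{\mathcal S}\|_{\mathcal K^s}\le 2\varepsilon_0$.

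The heart of the proof is to show that $u_{\mathcal S}$ is invariant under the scaling \eqref{scale}. Fix $\lambda>0$ and put $v:=(u_{\mathcal S})_\lambda$, that is $v(t,x)=\lambda^{\beta}u_{\mathcal S}(\lambda^2 t,\lambda x)$. First I would check that $v$ again satisfies \eqref{integral-eq} with datum $\varphi_\lambda(x):=\lambda^{\beta}\varphi(\lambda x)$. The linear part transforms as $e^{t\Delta}\varphi_\lambda=(e^{\,\cdot\,\Delta}\varphi)_\lambda$, while a change of variables in the convolutions defining $e^{(t-\tau)\Delta}$, combined with the algebraic identity $\beta\alpha-\gamma=\beta+2$ (so that the nonlinearity $|\cdot|^{\gamma}|u|^{\alpha-1}u$ carries exactly the homogeneity matching the extra time dilation), shows that the Duhamel operator commutes with the scaling. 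Since $\varphi$ is homogeneous of degree $-\beta$, we have $\varphi_\lambda=\varphi$; thus $v$ solves \eqref{integral-eq} with the \emph{same} datum $\varphi$. A further change of variables gives the scale invariance of the norm,
\[
t^{\frac{s_c-s}{2}}\|v(t)\|_{L^q_s}=(\lambda^2 t)^{\frac{s_c-s}{2}}\|u_{\mathcal S}(\lambda^2 t)\|_{L^q_s},
\]
whence $\|v\|_{\mathcal K^s}=\|u_{\mathcal S}\|_{\mathcal K^s}\le 2\varepsilon_0$, so $v$ lies in the same ball as $u_{\mathcal S}$.

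The step I expect to be the main obstacle is concluding $v=u_{\mathcal S}$, because one cannot appeal to the uniqueness statement $(ii)$ of Theorem~\ref{t:HH.LWP}: the self-similar datum $\varphi\notin L^q_{s_c}(\R^d)$, since by homogeneity the weighted integral defining $\|\varphi\|_{L^q_{s_c}}$ reduces to a constant times $\int_0^\infty r^{-1}\,dr$, which diverges (and likewise $\varphi\notin\mathcal L^\infty_{s_c}$ when $q=\infty$). Instead I would use the uniqueness of the fixed point underlying part~$(v)$: subtracting the two integral equations and invoking the same Lipschitz estimate used to construct $u_{\mathcal S}$ yields
\[
\|u_{\mathcal S}-v\|_{\mathcal K^s}\le C\big(\|u_{\mathcal S}\|_{\mathcal K^s}^{\alpha-1}+\|v\|_{\mathcal K^s}^{\alpha-1}\big)\|u_{\mathcal S}-v\|_{\mathcal K^s}\le 2C(2\varepsilon_0)^{\alpha-1}\|u_{\mathcal S}-v\|_{\mathcal K^s}.
\]
Choosing $\varepsilon_0$ small enough that $2C(2\varepsilon_0)^{\alpha-1}<1$ forces $u_{\mathcal S}=v$. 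The delicate point here is precisely that the rescaled solution must be kept inside the very ball on which the contraction was performed; this is exactly what the scale invariance of the $\mathcal K^s$-norm guarantees, uniformly in $\lambda$. As $\lambda>0$ is arbitrary, $(u_{\mathcal S})_\lambda=u_{\mathcal S}$ for every $\lambda$, i.e. $u_{\mathcal S}$ is a forward self-similar solution.

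It remains to verify the initial trace $u_{\mathcal S}(t)\to\varphi$ in $\mathcal S'(\R^d)$ as $t\to0$. Writing $u_{\mathcal S}(t)=e^{t\Delta}\varphi+\int_0^t e^{(t-\tau)\Delta}\{|\cdot|^{\gamma}|u_{\mathcal S}|^{\alpha-1}u_{\mathcal S}\}(\tau)\,d\tau$, the linear part converges because, for $\psi\in\mathcal S(\R^d)$, $\langle e^{t\Delta}\varphi,\psi\rangle=\langle\varphi,e^{t\Delta}\psi\rangle\to\langle\varphi,\psi\rangle$, since $e^{t\Delta}\psi\to\psi$ in $\mathcal S(\R^d)$. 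For the Duhamel part I would test against $\psi$ and apply the weighted H\"older inequality together with the identity $\big\||\cdot|^{\gamma}|u_{\mathcal S}(\tau)|^{\alpha-1}u_{\mathcal S}(\tau)\big\|_{L^{q/\alpha}_{s\alpha-\gamma}}=\|u_{\mathcal S}(\tau)\|_{L^q_s}^{\alpha}\le \tau^{-\frac{\alpha(s_c-s)}{2}}\|u_{\mathcal S}\|_{\mathcal K^s}^{\alpha}$ (legitimate since $q\ge\alpha$), bounding the test factor $\|e^{(t-\tau)\Delta}\psi\|_{L^{(q/\alpha)'}_{-(s\alpha-\gamma)}}$ uniformly for $0<\tau<t\le1$, to obtain
\[
\Big|\Big\langle \int_0^t e^{(t-\tau)\Delta}\{\cdots\}\,d\tau,\ \psi\Big\rangle\Big|\le C(\psi)\,\|u_{\mathcal S}\|_{\mathcal K^s}^{\alpha}\int_0^t \tau^{-\frac{\alpha(s_c-s)}{2}}\,d\tau.
\]
The lower bound on $s$ in \eqref{t:HH.LWP.c2} reads $s_c-s\le \frac2\alpha-\frac{d(\alpha-1)}{\alpha q}$, so $\frac{\alpha(s_c-s)}{2}\le 1-\frac{d(\alpha-1)}{2q}<1$ for $q<\infty$; the time integral then equals a constant times $t^{1-\frac{\alpha(s_c-s)}{2}}\to0$, and the Duhamel term vanishes in $\mathcal S'$. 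The borderline $q=\infty$ case, where the exponent can equal $1$, is handled by extracting the extra decay in $t-\tau$ afforded by Lemma~\ref{l:wLpLq}; this verification, like the scale-covariance of the Duhamel operator, is routine but tied to the admissible ranges \eqref{t:HH.LWP.c1}--\eqref{t:HH.LWP.c2}.
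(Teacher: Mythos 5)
Your proposal is correct and follows essentially the same route as the paper: invoke the small-data global theory of Theorem \ref{t:HH.LWP}$(v)$, use the exact scale invariance of $\varphi$ and of the $\mathcal{K}^s$-norm to see that $(u_{\mathcal S})_\lambda$ solves the same integral equation in the same ball, and conclude $(u_{\mathcal S})_\lambda=u_{\mathcal S}$ by the uniqueness of the fixed point there, with the initial trace recovered from the Duhamel estimate. Your explicit observation that the uniqueness assertion $(ii)$ is unavailable because $\varphi\notin L^q_{s_c}(\R^d)$, so that one must fall back on the contraction in the ball $\|\cdot\|_{\mathcal K^s}\le 2\varepsilon_0$, is exactly the point the paper leaves implicit, and your verification is sound.
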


The following theorem deals with the local well-posedness 
of \eqref{HH} in the subcritical space $L^q_{\tilde s}(\R^d)$ with $\tilde s< s_c.$ 
\begin{thm}[Well-posedness in the subcritical space]	\label{t:HH.LWP.sub}
Let $d\in\mathbb{N},$ $\gamma\in\R$ and $\alpha\in\R$ satisfy \eqref{t:HH.LWP.c0}. 
Let $\tilde s\in\R$ be such that 
\begin{equation}\label{t:HH.LWP.sub.cs}
	\max\left\{-\frac{d}{\alpha}, \, \frac{\gamma}{\alpha-1} \right\}
		<\tilde s < \frac{2+\gamma}{\alpha-1}.
\end{equation}
Let $q\in[1,\infty]$ be such that 
\begin{equation}\label{t:HH.LWP.sub.c1}
	\alpha\le q \le \infty 
		\quad\text{and}\quad
	-\frac{\tilde s}{d} < \frac1{q} < \min \left\{ \frac{2}{d(\alpha-1)}, \,
		\frac1{\alpha} \left(1-\frac{\tilde s}{d} \right), \, 
		\frac1{d} \left(\frac{2 + \gamma}{\alpha-1} -\tilde s \right)  \right\}
\end{equation}
and let $s \in \R$ be such that 
\begin{equation}\label{t:HH.LWP.sub.c2}
	\frac{\tilde s+\gamma}{\alpha} \le s
		\quad\text{and}\quad
	- \frac{d}{q} < s 
	< \min \left\{ \frac{d+\gamma}{\alpha} - \frac{d}{q}, \tilde s \right\}. 
\end{equation}
Then the Cauchy problem \eqref{HH} is locally well-posed in $L^q_{\tilde{s}}(\R^d)$ for arbitrary data $u_0\in L^q_{\tilde{s}}(\R^d)$. More precisely, the following assertions hold. 
\begin{enumerate}[$(i)$]
\item {\rm (}Existence{\rm )} 
For any $u_0 \in L^q_{\tilde s}(\R^d),$ 
there exist a positive number $T$ depending only on $\|u_0\|_{L^q_{\tilde s}}$ and an $L^q_{\tilde s}(\R^d)$-mild solution 
$u $
\ to \eqref{HH} satisfying 
\begin{equation}\nonumber	
	\|u\|_{\tilde{\mathcal{K}}^s(T)} 
		\le 2 \|e^{t\Delta} u_0 \|_{\tilde{\mathcal{K}}^s(T)}. 
\end{equation}
Moreover, the solution can be extended to the maximal interval 
$[0,T_m),$ where $T_m$ is defined by \eqref{d:Tm}.
\item {\rm (}Uniqueness in $\tilde{\mathcal{K}}^s(T)${\rm )} 
Let $T>0.$ If $u, v \in \tilde{\mathcal{K}}^s(T)$ satisfy 
\eqref{integral-eq} with $u(0) = v(0)=u_0,$ then $u=v$ on $[0,T].$ 
\item {\rm (}Continuous dependence on initial data{\rm )}  
For any initial data $u_0$ and $v_0$ in $L^q_{\tilde s}(\R^d),$ 
let $T(u_0)$ and $T(v_0)$ be the corresponding existence time given by $(i).$ 
Then there exists a constant $C$ depending on $u_0$ and $v_0$ such that 
the corresponding solutions $u$ and $v$ satisfy 
\begin{equation}\nonumber	
	\|u-v\|_{L^\infty(0,T;L^q_{\tilde s}) \cap \tilde{\mathcal{K}}^s(T)} 
	\le C \|u_0-v_0\|_{L^q_{\tilde s}}
\end{equation}
for some $T\le \min\{T(u_0), T(v_0)\}.$ 
\item {\rm (}Blow-up criterion{\rm )} If $T_m<\infty,$ 
	then $\lim_{t\rightarrow T_m-0}\|u(t)\|_{L^q_{\tilde s}}=\infty.$ 
Moreover, the following lower bound of blow-up rate holds: 
there exists a positive constant $C$ independent of $t$ such that 
\begin{equation}\label{t:HH.LWP:Tm}
	\|u(t)\|_{L^q_{\tilde s}} \ge  \frac{C}{(T_m - t)^{\frac{s_c-\tilde s}{2}} }
\end{equation}
for $t\in (0,T_m)$.
\end{enumerate}
\end{thm}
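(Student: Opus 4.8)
The plan is to construct the solution by Banach's fixed point theorem applied to the map
\[
\Phi(u)(t) := e^{t\Delta}u_0 + \int_0^t e^{(t-\tau)\Delta}\bigl\{|\cdot|^\gamma |u(\tau)|^{\alpha-1}u(\tau)\bigr\}\,d\tau
\]
on a closed ball of radius $2R$ in the Kato space $\tilde{\mathcal K}^s(T)$, where $R:=\|e^{t\Delta}u_0\|_{\tilde{\mathcal K}^s(T)}$ and the norm is $\|u\|_{\tilde{\mathcal K}^s(T)}=\sup_{0\le t\le T}t^{(\tilde s-s)/2}\|u(t)\|_{L^q_s}$. First I would bound the linear part: applying Lemma \ref{l:wLpLq} with equal integrability exponents and weights $\tilde s\to s$ (admissible since $s<\tilde s$) gives $\|e^{t\Delta}u_0\|_{L^q_s}\le C t^{-(\tilde s-s)/2}\|u_0\|_{L^q_{\tilde s}}$, hence $R\le C\|u_0\|_{L^q_{\tilde s}}$ uniformly in $T$.

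The heart of the argument is the nonlinear (Duhamel) estimate, in which the potential $|x|^\gamma$ is absorbed into the weight. I would choose the intermediate exponent $p=q/\alpha$ (which satisfies $p\ge1$ precisely because $\alpha\le q$) and the weight $\sigma=\alpha s-\gamma$, so that the pointwise identity $\bigl\||\cdot|^\gamma|u|^{\alpha-1}u\bigr\|_{L^{p}_{\sigma}}=\|u\|_{L^q_s}^{\alpha}$ holds. Feeding this into Lemma \ref{l:wLpLq} with source space $L^p_\sigma$ and target $L^q_s$ yields a kernel bound $(t-\tau)^{-a}$ with $a=\frac{d(\alpha-1)}{2q}+\frac{(\alpha-1)s-\gamma}{2}$, while the definition of the norm gives $\|u(\tau)\|_{L^q_s}^\alpha\le \tau^{-b}\|u\|_{\tilde{\mathcal K}^s(T)}^\alpha$ with $b=\frac{\alpha(\tilde s-s)}{2}$. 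The time integral is then a Beta integral $\int_0^t(t-\tau)^{-a}\tau^{-b}\,d\tau=t^{1-a-b}B(1-a,1-b)$, convergent when $a<1$ and $b<1$. A short computation shows that after multiplying by the temporal weight $t^{(\tilde s-s)/2}$ the total power of $t$ equals $\frac{(\alpha-1)(s_c-\tilde s)}{2}$, which is strictly positive in the subcritical regime $\tilde s<s_c$. This positive gain of a power of $T$ is exactly what produces local existence for arbitrary data with existence time depending only on $\|u_0\|_{L^q_{\tilde s}}$. The contraction estimate for $\Phi(u)-\Phi(v)$ follows identically, using the elementary inequality $\bigl||u|^{\alpha-1}u-|v|^{\alpha-1}v\bigr|\le C(|u|^{\alpha-1}+|v|^{\alpha-1})|u-v|$ together with Hölder's inequality in the weighted spaces.

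The remaining assertions I would derive as in the critical case. Continuity $u\in C([0,T];L^q_{\tilde s})$ follows from strong continuity of the semigroup on $\mathcal L^q_{\tilde s}$, a density argument, and the Duhamel estimate; uniqueness in $\tilde{\mathcal K}^s(T)$ and continuous dependence follow from the same difference estimate. For the blow-up criterion, since the existence time depends only on the norm, the scaling \eqref{scale} — under which $\|u_\lambda(0)\|_{L^q_{\tilde s}}=\lambda^{s_c-\tilde s}\|u_0\|_{L^q_{\tilde s}}$ while time rescales by $\lambda^2$ — forces $T(u_0)\gtrsim \|u_0\|_{L^q_{\tilde s}}^{-2/(s_c-\tilde s)}$. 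Applying this with initial time $t$ close to $T_m$ gives $T_m-t\gtrsim\|u(t)\|_{L^q_{\tilde s}}^{-2/(s_c-\tilde s)}$, which is precisely the lower bound \eqref{t:HH.LWP:Tm}.

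The main obstacle I anticipate is purely the bookkeeping of the weight indices: one must verify that the hypotheses \eqref{t:HH.LWP.sub.cs}--\eqref{t:HH.LWP.sub.c2} guarantee simultaneously that $(p,\sigma)=(q/\alpha,\alpha s-\gamma)$ paired with $(q,s)$ is an admissible configuration for the weighted estimate of Lemma \ref{l:wLpLq} (including the endpoint cases $(i)$--$(v)$ singled out in the introduction), and that both time exponents satisfy $a<1$ and $b<1$ so that the Beta integral converges. Tracking these inequalities, rather than any single hard estimate, is where the delicacy lies.
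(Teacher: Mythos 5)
Your proposal is correct and follows essentially the same route as the paper: a fixed-point argument in $\tilde{\mathcal{K}}^s(T)$ built on the smoothing estimate from $L^{q/\alpha}_{\alpha s-\gamma}$ to $L^q_s$, the identity $\||\cdot|^{\gamma}|u|^{\alpha-1}u\|_{L^{q/\alpha}_{\alpha s-\gamma}}=\|u\|_{L^q_s}^{\alpha}$, and a Beta-function computation yielding the positive power $T^{\frac{\alpha-1}{2}(s_c-\tilde s)}$ that makes the existence time depend only on $\|u_0\|_{L^q_{\tilde s}}$ (this is the paper's Lemma \ref{l:Kato.est.sub} and Lemma \ref{l:exist.sub}). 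Your derivation of the blow-up rate \eqref{t:HH.LWP:Tm} from the quantitative lower bound on the existence time is the same argument the paper runs (by contradiction with $M=2\|u(t_0)\|_{L^q_{\tilde s}}$), merely phrased through scaling; the only detail you gloss over is the companion estimate $\|N(u)\|_{L^\infty(0,T;L^q_{\tilde s})}\le \tilde C_2 T^{\frac{\alpha-1}{2}(s_c-\tilde s)}\|u\|_{\tilde{\mathcal{K}}^s(T)}^{\alpha}$ (the paper's Lemma \ref{l:subcrt.est}) needed for continuity in the $L^q_{\tilde s}$ topology, which is of the same type.
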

\begin{rem}
Note that 
\eqref{t:HH.LWP.sub.c1} implies $\tilde s<s_c,$ i.e., $u_0 \in L^q_{\tilde s}(\R^d)$ 
is a scale-subcritical data.  
\end{rem}

Finally, for the scale-supercritical case, i.e. $s>s_c$, we prove non-existence of a weak local positive solution, whose definition is given below. More precisely, we may prove that there exists a positive initial data $u_0$ in $L^q_s(\R^d)$ with $s>s_c$ that does not generate a 
local solution to \eqref{HH} even in the distributional sense. 
\begin{definition}[Weak solution]
\label{d:w.sol}
Let $T>0$. We call a function $u:[0,T)\times \R^d\rightarrow \R$ a weak solution to the Cauchy problem \eqref{HH} 
if $u$ belongs to $L^{\alpha}(0,T;L^{\alpha}_{\frac{\gamma}{\alpha},loc}(\R^d))$ 
and if it satisfies the equation \eqref{HH} in the distributional sense, i.e., 
\begin{align}\label{weak}
\notag\int_{\R^d} &u(T',x) \eta (T',x) \, dx-\int_{\R^d} u_0(x) \eta (0,x) \, dx\\
	&= \int_{[0,T']\times\R^d} u(t ,x)(\Delta \eta + \eta_t) (t ,x) 
	+ |x|^{\gamma} |u(t, x)|^{\alpha-1} u(t,x) \,\eta(t,x)  \, dx\,dt
\end{align}
for all $T'\in [0,T]$ and for all $\eta \in C^{1,2}([0,T]\times \R^d)$ such that 
$\operatorname{supp} \eta(t, \cdot)$ is compact. 
\end{definition}
We remark that our $L^q_{\tilde s}(\R^d)$-mild solutions 
are weak solutions in the above sense. See Lemma \ref{mildweak} in Appendix.
\begin{thm}[Nonexistence of local positive weak solution]
\label{t:nonex}
Let $d\in \mathbb N$ and $\gamma \in \mathbb R$. 
Assume that $q\in [1,\infty],$ $\alpha\in\R$ and $s\in\R$ satisfy 
$\alpha>\max(1, \alpha_F(d,\gamma))$ and $s>s_c$. 
Then there exists an initial data $u_0 \in L^q_s (\R^d)$ such that the 
problem \eqref{HH} with $u(0)=u_0$ has no local positive weak solution. 
\end{thm}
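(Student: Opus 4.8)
\emph{Strategy.} The plan is to produce a \emph{single} explicit family of data that realizes every supercritical space at once. Writing $a_c:=\frac{2+\gamma}{\alpha-1}$ for the homogeneity of the scaling \eqref{scale}, I would take
\[
u_0^{(c)}(x):=c\,|x|^{-a_c}\,\mathbf{1}_{\{|x|<1\}},\qquad c>0 .
\]
A one-line computation shows that $u_0^{(c)}\in L^q_s(\R^d)$ for \emph{every} $s>s_c$: the singularity $|x|^{-a_c}$ is exactly borderline for the critical weight $s=s_c$, so raising the weight by any amount makes $u_0^{(c)}$ integrable near the origin, and there is no constraint from infinity because of the cut-off. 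Moreover $\alpha>\alpha_F(d,\gamma)$ is equivalent to $a_c<d$, which guarantees $u_0^{(c)}\in L^1_{loc}$ and $e^{t\Delta}u_0^{(c)}<\infty$. I would then show that for $c$ large enough the problem \eqref{HH} admits no positive weak solution, the obstruction being that such a solution would be forced to equal $+\infty$ on a set of positive measure.

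\emph{Necessary condition.} Since $|x|^{\gamma}u^{\alpha}\ge0$, a nonnegative weak solution $u$ (Definition \ref{d:w.sol}) is a distributional supersolution of the heat equation; by the standard representation of nonnegative supercaloric functions and the positivity of $g_t$ this gives $u(t)\ge e^{t\Delta}u_0$, and reinserting the nonlinearity into the weak formulation \eqref{weak} (the converse implication to the mild/weak correspondence of Lemma \ref{mildweak}) yields the Duhamel lower bound
\[
u(t,x)\ \ge\ e^{t\Delta}u_0(x)+\int_0^t e^{(t-\tau)\Delta}\big[\,|\cdot|^{\gamma}u(\tau)^{\alpha}\,\big](x)\,d\tau .
\]
Defining the Picard iterates $\Phi_0:=e^{t\Delta}u_0$ and $\Phi_{n+1}:=e^{t\Delta}u_0+\int_0^t e^{(t-\tau)\Delta}[\,|\cdot|^{\gamma}\Phi_n^{\alpha}]\,d\tau$, the monotonicity of this map together with the bound above gives $u\ge\Phi_n$ for all $n$, so it suffices to force $\Phi_n\uparrow+\infty$.

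\emph{Divergence of the iterates.} This is the heart of the matter. Modulo the cut-off, $u_0^{(c)}$ is homogeneous of degree $-a_c$, so the iterates inherit the self-similar form $\Phi_n(t,x)=t^{-a_c/2}\psi_n(x/\sqrt t)$ and the recursion collapses to a fixed \emph{profile} recursion $\psi_{n+1}=c\,\Phi_{\mathrm{prof}}+\mathcal N[\psi_n]$, where $\Phi_{\mathrm{prof}}=e^{\Delta}(|\cdot|^{-a_c})>0$ and $\mathcal N$ is a positive, monotone, positively homogeneous operator of degree $\alpha$ with strictly positive kernel. Fixing an annulus $B$ bounded away from the origin, on which $\Phi_{\mathrm{prof}}\ge\delta>0$ and $\mathcal N[\psi]\ge\kappa\,(\inf_B\psi)^{\alpha}$ on $B$, the numbers $m_n:=\inf_B\psi_n$ obey $m_{n+1}\ge\kappa\,m_n^{\alpha}$ with $m_0\ge c\delta$. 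Since $\alpha>1$, as soon as $c\delta>\kappa^{-1/(\alpha-1)}$ one gets $m_n\to\infty$; hence $\psi_n\to\infty$ on $B$ and, by self-similarity, $\Phi_n\equiv+\infty$ on the space-time cone $\{\,x/\sqrt t\in B\,\}$. This contradicts $u\in L^{\alpha}(0,T;L^{\alpha}_{\gamma/\alpha,loc})$ and proves nonexistence.

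\emph{Main obstacle and edge cases.} The principal difficulty is to make the self-similar reduction rigorous despite the truncation $\mathbf{1}_{\{|x|<1\}}$, which destroys exact homogeneity. I would localize to small times $t\le t_*$ and to $|x|\lesssim\sqrt{t_*}$, where the cut-off is inactive on all relevant integration regions, so that the profile recursion holds up to harmless multiplicative errors that the super-exponential growth $m_n\gtrsim m_0^{\alpha^n}$ absorbs. A secondary, more routine point is the justification of the supersolution/Duhamel step for a merely weak, possibly singular $u$, where nonnegativity together with monotone convergence (Fatou) are essential to preserve the inequality. Finally, the construction tacitly assumes $a_c>0$, i.e. $\gamma>-2$; in the remaining range $\gamma\le-2$ (also permitted in the statement) $u_0^{(c)}$ is no longer singular at the origin and the blow-up is instead driven by the strongly singular potential $|x|^{\gamma}$, a Baras--Goldstein-type instantaneous complete blow-up, which I would treat by the same comparison scheme with $|x|^{\gamma}$ providing the amplification.
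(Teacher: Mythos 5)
Your route is genuinely different from the paper's. The paper's proof is a rescaled test-function (Kaplan/Mitidieri--Poho\v{z}aev) argument: it takes $u_0=|x|^{-\beta}\mathbf{1}_{\{|x|\le 1\}}$ with $\frac{2+\gamma}{\alpha-1}<\beta<\min\{s+\frac dq,\,d\}$ --- a \emph{supercritically} singular profile, which is exactly what $s>s_c$ makes available --- tests the weak formulation \eqref{weak} against $\psi_T^l$, and lets $T\to 0$ to reach the contradiction \eqref{contradiction}; it never leaves Definition \ref{d:w.sol}. You instead keep the \emph{critical} singularity $|x|^{-\frac{2+\gamma}{\alpha-1}}$ and make the amplitude $c$ large, then run a Weissler-type monotone iteration to push the Picard iterates to $+\infty$. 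That dichotomy (small $c$: self-similar solution as in Theorem \ref{t:HH.self.sim}; large $c$: no local positive solution) is true, and your scheme for it is classical. Incidentally, your closing worry about $\gamma\le -2$ is unnecessary: the scaling identity $1+\frac{\gamma}{2}-\frac{\alpha}{2}\cdot\frac{2+\gamma}{\alpha-1}=-\frac{1}{2}\cdot\frac{2+\gamma}{\alpha-1}$ holds for every $\gamma$, so the same lower bound on the parabolic annulus $\{|x|\asymp\sqrt{t}\}$ closes the recursion $m_{n+1}\ge\kappa m_n^{\alpha}$ there as well; the data is then merely bounded, but large $c$ still launches the divergence. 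No separate Baras--Goldstein mechanism is needed.

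The genuine gap is your very first step: deducing the pointwise Duhamel \emph{inequality} $u(t)\ge e^{t\Delta}u_0+\int_0^t e^{(t-\tau)\Delta}\bigl[|\cdot|^{\gamma}u(\tau)^{\alpha}\bigr]\,d\tau$ from Definition \ref{d:w.sol}. That definition is an identity against compactly supported $C^{1,2}$ test functions for a function only assumed to lie in $L^{\alpha}(0,T;L^{\alpha}_{\gamma/\alpha,loc}(\R^d))$; it gives neither a supersolution property nor any global-in-space information, and Lemma \ref{mildweak} goes only in the direction mild $\Rightarrow$ weak. Passing from \eqref{weak} to the integral inequality requires approximating the backward heat kernel (which is not compactly supported) by admissible test functions and controlling the error terms via positivity and monotone convergence; this is standard in the literature you implicitly lean on, but it is a genuine lemma that everything downstream rests upon, and it is precisely what the paper's argument avoids having to prove. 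A secondary, smaller issue: the cut-off errors should not be absorbed iterate by iterate, since they compound over infinitely many steps; the clean fix is to drop exact self-similarity and iterate $m_n(t):=\inf_{|x|\le\sqrt{t}}\,t^{\frac{2+\gamma}{2(\alpha-1)}}\Phi_n(t,x)$ directly for $0<t\le 1$, which yields $m_{n+1}\ge\kappa\,m_n^{\alpha}$ with $\kappa$ independent of $n$ and $t$. With those two points supplied your proof is correct, but it is considerably longer than the paper's short test-function computation, which is self-contained at the level of Definition \ref{d:w.sol}.
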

\bigbreak
The rest of the paper is organized as follows: In Section 2, we 
prove the linear estimates and nonlinear ones in weighted Lebesgue spaces. 
Section 3 is devoted to the proof of Theorems \ref{t:HH.LWP}, \ref{t:HH.LWP.sub} 
and \ref{t:HH.self.sim}. 
We then give a sketch of the proof of Theorem \ref{t:nonex} in Section 5. 
In Appendix, we collect some elementary properties related to our function spaces and prove Lemma \ref{mildweak}. 

\section{Linear and nonlinear estimates}
Throughout the rest of the paper, we denote by $C$ 
a harmless constant that may change from line to line. 
\subsection{Linear estimate}
The following estimate for the heat semigroup $\{e^{t\Delta}\}_{t\ge0}$ in weighted Lebesgue space is known except for the endpoint cases (see \cites{Tsu2011, OkaTsu2016}).
\begin{lem}[Linear estimate]
	\label{l:wLpLq}
Let $d\in\N,$ $1\le p \le q \le \infty$ and  
\begin{equation}	\label{l:wLpLq:cs}
	-\frac{d}{q} < s' \le s < d\left( 1-\frac{1}{p} \right).
\end{equation}
In addition, $s\le 0$ when $p=1$ and $0\le s'$ when $q=\infty.$ 
In particular,  \eqref{l:wLpLq:cs} is understood as $s'=s=0$ when $p=1$ and $q=\infty.$ 
Then there exists some positive constant $C$ depending on $d,$ $p,$ $q,$ $s$ and
$s'$ such that 
\begin{equation}\nonumber
	\| e^{t\Delta} f\|_{L^q_{s'}} 
	\le C t^{-\frac{d}2 (\frac1{p}-\frac1{q}) - \frac{s-s'}{2} } 
		\| f\|_{L^p_{s}}
\end{equation}
for all $f\in L^p_{s}(\R^d)$ and $t>0$. 
Moreover, condition \eqref{l:wLpLq:cs} is optimal. 
\end{lem}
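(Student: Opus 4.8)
\emph{Step 1: parabolic scaling.} The plan is to first remove the time variable. Using the two homogeneities
\[
	e^{t\Delta}\big(f(\lambda\cdot)\big) = \big(e^{\lambda^2 t\Delta}f\big)(\lambda\cdot),
	\qquad
	\|f(\lambda\cdot)\|_{L^p_s} = \lambda^{-s-\frac{d}{p}}\|f\|_{L^p_s}
	\quad(\lambda>0),
\]
and specializing to $\lambda=\sqrt t$, the asserted inequality for general $t>0$ is equivalent to the single inequality $\|e^{\Delta}f\|_{L^q_{s'}}\le C\|f\|_{L^p_s}$; the power of $t$ displayed in the statement is exactly the one forced by these homogeneities. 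Thus I would reduce everything to $t=1$.

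\emph{Step 2: a weighted kernel operator.} Writing $h:=|\cdot|^{s}f$ and $g_1$ for the kernel \eqref{d:h.krnl} at $t=1$, the case $t=1$ becomes the $L^p(\R^d)\to L^q(\R^d)$ boundedness of the positive integral operator with kernel $\tilde K(x,y):=|x|^{s'}\,g_1(x-y)\,|y|^{-s}$. I would estimate this by separating the regime near the spatial origin from the regime away from it, i.e. splitting into $\{|x|\le1\}$ and $\{|x|>1\}$, and within each splitting the $y$-integral into a near-diagonal piece $\{|y|\sim|x|\}$ and off-diagonal tails.

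\emph{Step 3: the two regimes.} On $\{|x|\le1\}$, for $|y|\le2$ the Gaussian is harmless ($g_1(x-y)\lesssim1$) and H\"older's inequality gives $|x|^{s'}\,\||y|^{-s}\|_{L^{p'}(|y|\le2)}\,\|h\|_{L^p}$; taking the $L^q$-norm in $x$ over the unit ball, finiteness of the two weight factors is \emph{exactly} $s<d(1-\tfrac1p)=d/p'$ (local $p'$-integrability of $|y|^{-s}$) and $s'>-d/q$ (local $q$-integrability of $|x|^{s'}$), the two strict conditions in \eqref{l:wLpLq:cs}. The complementary range $|y|>2$, and the whole regime $\{|x|>1\}$ away from the diagonal (where $|x-y|\gtrsim\max(|x|,|y|)$), are controlled crudely, the Gaussian decay $e^{-c\max(|x|,|y|)^2}$ dominating every polynomial weight. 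On the remaining near-diagonal piece of $\{|x|>1\}$ the weights are comparable and regular, $|x|^{s'}|y|^{-s}\sim|y|^{s'-s}$; restricting to dyadic annuli $|x|\sim2^{j}$ (now $j\gtrsim0$) turns the estimate into the unweighted Young inequality $\|g_1*\varphi\|_{L^q}\le\|g_1\|_{L^r}\|\varphi\|_{L^p}$ with $1+\tfrac1q=\tfrac1r+\tfrac1p$ (valid for all $1\le p\le q\le\infty$ since $g_1\in L^r$ for every $r$), and the sum over $j$ converges precisely because $s'\le s$ (the borderline $s'=s$ absorbed by $\ell^p\hookrightarrow\ell^q$, using $p\le q$). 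This consumes the third condition $s'\le s$.

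\emph{Step 4: endpoints and optimality.} The genuinely delicate part, and what is new relative to \cite{Tsu2011}, is the endpoint list $(i)$--$(v)$ where $p$ or $q$ equals $1$ or $\infty$: there the intermediate Young exponent $r$ degenerates to $1$ or $\infty$ and the local-integrability thresholds become borderline, which is exactly why the side conditions ``$s\le0$ if $p=1$'', ``$0\le s'$ if $q=\infty$'', and ``$s=s'=0$ if $(p,q)=(1,\infty)$'' are imposed; in each such case I would bypass Young by a direct argument such as $\|g_1*\varphi\|_{L^\infty}\le\|g_1\|_{L^\infty}\|\varphi\|_{L^1}$ together with the now-trivial weight factors ($|y|^{-s}$ with $s=0$, or $|x|^{s'}$ with $s'=0$). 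I expect this bookkeeping to be the main obstacle. Finally, optimality of \eqref{l:wLpLq:cs} I would certify by testing: if $s'\le-d/q$, already a fixed nonnegative $f$ with $e^{\Delta}f(0)>0$ yields $|x|^{s'}e^{\Delta}f\notin L^q$ near $0$; if $s\ge d/p'$, a suitably singular datum concentrated at the origin belongs to $L^p_s$ while $e^{\Delta}f(0)=+\infty$; and $s'>s$ is excluded by the short-time scaling balance already encoded in the $t$-exponent. These counterexamples are short and only serve to show sharpness.
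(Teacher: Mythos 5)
Your outline is correct and completable, but it takes a genuinely different route from the paper. The paper disposes of the open range $1<p\le q<\infty$ by citing weighted-norm machinery (the fact that $|x|^{sp}\in A_p$ exactly when $-\frac{d}{p}<s<d(1-\frac1p)$, together with \cite{OkaTsu2016}), and then treats the five endpoint configurations one by one, each time using a pointwise splitting of the weight, $|x|^{s'}\lesssim|x-y|^{s'}+|y|^{s'}$ for $s'\ge0$ or $|y|^{-s}\lesssim|x-y|^{-s}+|x|^{-s}$ for $s\le0$, followed by H\"older and the uniform kernel integrals of Lemma \ref{l:g.unfrm.bnd}; the side conditions $s\le0$ when $p=1$ and $0\le s'$ when $q=\infty$ are precisely what make those splittings available. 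Your Schur-type decomposition of the kernel $|x|^{s'}g_1(x-y)|y|^{-s}$ into a compact region near the origin (crude rank-one H\"older bound, which is where $-\frac{d}{q}<s'$ and $s<d(1-\frac1p)$ enter), a far off-diagonal region (Gaussian decay beats every polynomial weight, including the polynomially growing factor $\||y|^{-s}\|_{L^{p'}(|y|<c|x|)}$ coming from the origin singularity in $y$), and a far near-diagonal region (dyadic annuli, unweighted Young, $\ell^p\hookrightarrow\ell^q$ to absorb $s'=s$) is self-contained and covers all of $1\le p\le q\le\infty$ at once. Indeed, contrary to your worry in Step 4, the endpoints are not an extra obstacle in your scheme: Young's inequality is valid for all $1\le p\le q\le\infty$ since $g_1\in L^r$ for every $r$, and the only degeneration is that the thresholds $\||y|^{-s}\|_{L^{p'}(|y|\le2)}<\infty$ and $\||x|^{s'}\|_{L^q(|x|\le1)}<\infty$ turn into exactly the non-strict side conditions of the statement. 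What your approach buys is uniformity and independence from $A_p$-theory; what the paper's buys is brevity in the non-endpoint range. Your optimality tests for $s'\le-\frac{d}{q}$ and $s\ge d(1-\frac1p)$ coincide in substance with the paper's (a bump whose heat evolution does not vanish at the origin, respectively a datum in $L^p_s(\R^d)\setminus L^1_{loc}(\R^d)$ for which $e^{\Delta}f\equiv\infty$). The one soft spot is your claim that $s'>s$ is ``excluded by the short-time scaling balance'': dilation alone does not rule it out, since it only fixes the power of $t$; the correct test is a unit bump translated to $|x_0|=R\to\infty$, which gives $\|e^{\Delta}\chi_{B(x_0,1)}\|_{L^q_{s'}}/\|\chi_{B(x_0,1)}\|_{L^p_{s}}\gtrsim R^{s'-s}\to\infty$. (The paper does not prove that part of the optimality either, so this does not put you behind it.)
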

We mainly focus on the endpoint cases in the following proof.
\begin{proof}
The inequality for $1< p\le q <\infty$ follows from Lemma 3.2, \cite[Proposition C.1]{OkaTsu2016} and the fact that the weight function 
$|x|^{s p}$ belongs to the Muckenhoupt class $A_p$ if and only if 
$- \frac{d}{p} < s < d(1- \frac1{p}).$ 

For the endpoint exponents, we divide the proof into five cases : 
$(i)$ $1<p<q=\infty,$ $(ii)$ $p=q=1,$ $(iii)$ $1=p<q<\infty,$ 
$(iv)$ $p=q=\infty$ and $(v)$ $(p,q)=(1,\infty).$ 
It suffices to prove the inequality for $e^{\Delta} f$ and then resort to 
a dilation argument as in the proof of \cite[Proposition 2.1]{BenTayWei2017}. 

Throughout the proof of this lemma, 
we write $a\lesssim b$ if $a \le C b$ with some constant $C.$\\ 
\underline{$(i)$ $1<p<q=\infty$}: Since $|x|^{s'} \lesssim |x-y|^{s'} + |y|^{s'}$ if $s' \ge 0,$ we have 
\begin{equation*}
	|x|^{s'} | e^{\Delta} f(x)| 
	\lesssim \int_{\R^d} |x-y|^{s'} g(x-y) |f(x)| \, dy 
		+ \int_{\R^d} |y|^{s'} g(x-y) |f(x)| \, dy = : I_1 + I_2.
\end{equation*}
For $I_1,$ H\"older's inequality with $\frac1{p}+\frac1{p'}=1,$ $p>1,$ leads to 
\begin{equation*}\nonumber
	I_1 
		 \le \left( \int_{\R^d} (|y|^{-s} |x-y|^{s'} g(x-y) )^{p'}\, dy \right)^{\frac1{p'}}  \, \|f\|_{L^p_s} 
		\lesssim \|f\|_{L^p_s}, 
\end{equation*}
thanks to Lemma \ref{l:g.unfrm.bnd} $(1)$ with $q\equiv p',$ $a\equiv s$ 
and $b\equiv s',$ where $0\le s<\frac{d}{p'}$ and $s'\ge0.$ 
Similarly, H\"older's inequality and Lemma \ref{l:g.unfrm.bnd} $(2)$ 
with $q\equiv p'$ and $c\equiv s-s'$ yields 
\begin{align*}\nonumber
	I_2 
		& \le \left( \int_{\R^d} (|y|^{-(s-s')} g(x-y) )^{p'}\, dy \right)^{\frac1{p'}}  \, \|f\|_{L^p_s} 
		\lesssim \|f\|_{L^p_s},
\end{align*}
where $0\le s-s' < \frac{d}{p'}.$ 
Thus, $\| e^{\Delta} f\|_{L^{\infty}_{s'}} \lesssim \| f\|_{L^p_{s}}$ provided that 
$0 \le s' \le s < d\left(1-\frac1{p}\right).$ 

\underline{$(ii)$ $p=q=1$}: We have 
$|y|^{-s} \lesssim |x-y|^{-s} + |x|^{-s}$ if $s\le 0$ and thus 
\begin{align*}
	\|e^{\Delta} f\|_{L^1_{s'}} 
	&\lesssim \int_{\R^d} |x|^{s'} \int_{\R^d}  g(x-y) |x-y|^{-s} |y|^{s} |f(y)| \,dy \, dx \\
		&\qquad\qquad+ \int_{\R^d} |x|^{s'-s} \int_{\R^d}  g(x-y) |y|^{s} |f(y)| \,dy \, dx \\ 
	&\lesssim \int_{\R^d} \left( \int_{\R^d} |x|^{s'} g(x-y) |x-y|^{-s} \, dx \right) |y|^{s} |f(y)|  \,dy\\
		&\qquad\qquad+ \int_{\R^d} \left( \int_{\R^d}  |x|^{s'-s}  g(x-y) \, dx\right) |y|^{s} |f(y)| \,dy \ \lesssim \|f\|_{L^1_s}
\end{align*}
thanks to Fubini's theorem and Lemma \ref{l:g.unfrm.bnd} with 
$q\equiv 1,$ $a\equiv -s',$ $b\equiv -s$ and $c\equiv s-s',$ where 
$0 \le -s' < d,$ $0 \le -s$ and $0 \le s-s' < d.$ 
Thus, $\| e^{\Delta} f\|_{L^{1}_{s'}} \lesssim \| f\|_{L^1_{s}}$ provided that $-d<s'\le s \le 0.$

\underline{$(iii)$ $1=p < q < \infty$}: By H\"older's inequality 
with $1=\frac1{q}+\frac1{q'},$ $q<\infty,$ we have  
\begin{align*}
	|e^{\Delta} f(x)| 
	&\le \left( \int_{\R^d} |y|^{-sq} g(x-y)^q |y|^s |f(y)| \, dy \right)^{\frac1{q}} \|f\|_{L^1_s}^{\frac1{q'}}
\end{align*}
for $s\le 0.$ Taking the $L^q_{s'}(\R^d)$-norm of the both sides of the above, we obtain 
\begin{align*}
	\|e^{\Delta} f\|_{L^q_{s'}} 
	&\le \left( \int_{\R^d} |x|^{s'q} \left( \int_{\R^d} |y|^{s(1-q)} g(x-y)^q |f(y)| \, dy \right) dx \right)^{\frac1{q}} \|f\|_{L^1_s}^{\frac1{q'}}.
\end{align*}
Since $|y|^{-qs} \lesssim |x-y|^{-qs} + |x|^{-qs}$ if $s<0,$ 
Fubini's theorem and Lemma \ref{l:g.unfrm.bnd} with 
$q\equiv q,$ $a\equiv -s',$ $b\equiv -s$ and $c\equiv s-s'$ yield  
\begin{align*}
\int_{\R^d} |x|^{s'q} 
	&\left( \int_{\R^d} |y|^{-sq} g(x-y)^q |y|^s |f(y)| \, dy \right) dx \\
	&\lesssim \int_{\R^d} |x|^{s'q} \left( \int_{\R^d} |x-y|^{-qs} g(x-y)^q |y|^{s} |f(y)| \, dy \right) dx \\
	&\qquad\qquad 
	+ \int_{\R^d} |x|^{-(s-s')q} \left( \int_{\R^d} g(x-y)^q |y|^{s} |f(y)| \, dy \right) dx \\
	&\lesssim \int_{\R^d} |y|^{s} |f(y)| \left( \int_{\R^d} (|x|^{s'} |x-y|^{-s} g(x-y) )^q \, dx \right) dy \\
	&\qquad\qquad 
	+ \int_{\R^d} |y|^{s} |f(y)| \left( \int_{\R^d} (|x|^{-(s-s')} g(x-y))^q \, dx \right) dy
\lesssim \|f\|_{L^1_s}, 
\end{align*}
where $0\le -s' < \frac{d}{q},$ $0\le -s$ and $0 \le s-s' < \frac{d}{q}.$ 
Thus, $\| e^{\Delta} f\|_{L^{q}_{s'}} \lesssim \| f\|_{L^1_{s}}$ provided that 
$-\frac{d}{q}<s'\le s \le 0.$

\underline{$(iv)$ $p=q=\infty$}: 
Since $|x|^{s'} \lesssim |x-y|^{s'} + |y|^{s'}$ if $s' \ge 0,$ we have 
\begin{align*}
	|x|^{s'} &|e^{\Delta} f(x)| 
	\le |x|^{s'} \int_{\R^d} |y|^{-s} g(x-y) \, dy \|f\|_{L^\infty_s}\\
	&\lesssim \left(  \int_{\R^d} |y|^{-s} |x-y|^{s'} g(x-y) \, dy 
		+ \int_{\R^d} |y|^{s'-s} g(x-y) \, dy \right)  \|f\|_{L^\infty_s}
	\lesssim \|f\|_{L^\infty_s}
\end{align*}
thanks to Lemma \ref{l:g.unfrm.bnd} with 
$q\equiv 1,$ $a\equiv s,$ $b\equiv s'$ and $c\equiv s-s',$ where 
$0\le s< d,$ $0\le s'$ and $0\le s-s' < d.$ 
Thus, $\| e^{\Delta} f\|_{L^{\infty}_{s'}} \lesssim \| f\|_{L^{\infty}_{s}}$ provided that 
$0\le s'\le s <d.$
The case $(v)$ $(p,q) = (1,\infty)$ is trivial. 
We complete the proof of the endpoint estimates. 

\smallbreak
Next, we prove the optimality of \eqref{l:wLpLq:cs} for 
$1< p \le q < \infty$ by contradiction. 
Suppose that the inequality holds when $s' \le -\frac{d}{q}.$ 
We notice that every function $g$ in $L^q_{s'}(\R^d)$ must satisfy 
$\displaystyle \liminf_{|x|\to0} |x|^{\frac{d}{q}+s'} |g(x)| =0$ thanks to 
Corollary \ref{c:wLp.sg.dcy} in Appendix. 
In particular, we have 
$
	\Liminf_{|x|\to0} |g(x)| =0
$
as $0 \le -s'-\frac{d}{q}.$ Since $0<\frac{d}{p}+s < d,$ a function $f$ defined by 
\[
f(x) := \left\{\begin{aligned}
	&C, &&|x|\le 1\\
	&0, &&\text{else},
\end{aligned}\right. 
\]
where $C$ is a positive constant, belongs to $L^p_s(\R^d).$ However, clearly 
\[
\displaystyle \liminf_{|x|\to0} |e^{t\Delta} f(x)| \neq 0,
\]
which implies that $e^{t\Delta} f \notin L^{q}_{s'}(\R^d)$ and leads to a contradiction. 

The optimality of the upper bound of \eqref{l:wLpLq:cs} is based on the 
fact that the space $L^p_s(\R^d)$ contains functions that are not in $L^1_{loc}(\R^d),$ 
if $d\left( 1-\frac{1}{p} \right)< s.$ Let 
\[
f(x) := \left\{\begin{aligned}
	&|x|^{-d}, &&|x|\le 1\\
	&0, &&\text{else},
\end{aligned}\right. 
\]
so that it belongs to $L^p_s(\R^d)$ as $p(d-s)<d$ 
(Note that the space $L^p_s(\R^d)$ is defined for all measurable functions). 
A standard argument then shows that $e^{t\Delta} f$ does not make sense for the 
function $f.$ Indeed, for every $t>0$ and every $x$ such that $|x|\le 1,$ the estimates hold:
\begin{equation*}
	e^{t\Delta} f(x) 
	=  (4\pi t)^{-\frac{d}2} \int_{|y|\le 1} e^{\frac{-|x-y|^2}{4t}} |y|^{-d} \, dy 
	\ge  (4\pi t)^{-\frac{d}2} \int_{|y|\le 1} e^{-\frac{1}{t}} |y|^{-d} \, dy = \infty, 
\end{equation*}
where we have used $|x-y|\le 2.$ 
Thus $e^{t\Delta}$ is not well-defined in $L^p_s(\R^d)$ if $d\left( 1-\frac{1}{p} \right)< s.$ 
When $s= d(1-\frac1{p}),$ it suffices to take 
\[
f(x) := \left\{\begin{aligned}
	&|x|^{-d} 
	\left( \log\left(e+ \frac1{|x|}\right) \right)^{-\frac{a}{p}}, &&|x|\le 1,\\
	&0, &&\text{else},
\end{aligned}\right. 
\]
where $p\ge a>1,$ and show that $e^{t\Delta} f$ is not well-defined for the function 
by carrying out the same argument as above. Thus, we conclude the lemma. 
\end{proof}

\subsection{Nonlinear estimates}
Given $u_0\in L^q_{s_c}(\R^d)$ in the critical regime (resp. $L^q_{\tilde{s}}(\R^d)$ in the subcritical regime) and $T>0,$ let us define a map 
$\Phi : u \mapsto \Phi(u)$ on $\mathcal{K}^s(T)$ (resp. $\tilde{\mathcal{K}}^s(T)$) by 
\begin{equation}\label{map}
	\Phi(u) (t) := e^{t\Delta} u_0 + N(u)(t)
\end{equation}
with 
\begin{equation}\label{mapN}
	N(u)(t) :=  \int_0^t e^{(t-\tau)\Delta} 
	\left\{ |\cdot|^{\gamma} F(u(\tau,\cdot)) \right\} d\tau
	\quad\text{and}\quad
	F(u) := |u|^{\alpha-1}u.
\end{equation}

\subsubsection{Critical case}
The following are the stability and contraction estimates in the critical regime. 
The assertion $(2)$ below for $\theta<1$ is not required in the proof of existence 
but is used in the proof of uniqueness.  
\begin{lem}
\label{l:Kato.est}
Let $T \in (0,\infty]$ and $d\in\mathbb{N}.$ 
Let $\gamma\in\R$ and $\alpha\in\R$ satisfy \eqref{t:HH.LWP.c0}. 
\begin{enumerate}[$(1)$]
\item 
Let $q\in [1,\infty]$ be such that 
\begin{equation}\label{l:Kato.est.c1}
	\alpha\le q \le \infty \quad\text{and}\quad
	\frac1{q} < \min \left\{ \frac{2}{d(\alpha-1)}, \, 
	\frac{2}{d(\alpha-1)} + \frac{(d-2)\alpha - d -\gamma}{d\alpha (\alpha-1)} \right\}. 
\end{equation}
Let $s \in \R$ be such that 
\begin{equation}\label{l:Kato.est.c2}
	\frac{\gamma}{\alpha-1}\le s
		\quad\text{and}\quad
	\max\left\{- \frac{d}{q}, \, s_c - \frac2{\alpha} \right\} < s 
	< \min \left\{ s_c, \, s_c + \frac{(d-2)\alpha - d -\gamma}{\alpha(\alpha-1)} \right\},
\end{equation}
where $s_c$ is as in \eqref{d:sc}. 
Then there exists a positive constant $C_0$ 
depending only on $d,$ $\alpha,$ $\gamma,$ $q$ and $s$ such that 
the map $N$ defined by \eqref{mapN} satisfies 
\begin{equation}\label{l:Kato.est1}
\|N(u)\|_{\mathcal{K}^s(T)} 
	\le C_0 \|u\|_{\mathcal{K}^s(T)}^{\alpha}
\end{equation}
for all $u \in \mathcal{K}^s(T).$ 
\item
Let $q\in [1,\infty]$ be such that 
\begin{equation}\label{l:Kato.est.c1'}
\begin{aligned}
	&\alpha \le q \le \infty, \\
		\text{and}\quad
	&\frac1{q} < \min \left\{ \frac2{d(\alpha-1)},\, 
	\frac2{d(\alpha-1)} + \frac{\theta(d-2)(\alpha-1) - 2 - \gamma}
			{d(\alpha-1)(1+\theta(\alpha-1))} \right\},
\end{aligned}
\end{equation}
where $\theta \in (0,1]$ ($\frac1{2+\gamma} < \theta$ if d=1). 
Let $s \in \R$ be such that 
\begin{equation}\label{l:Kato.est.c2'}
\begin{aligned}
&	s_c - \frac{d}{\theta} \left( \frac{2}{d(\alpha-1)} -\frac{1}{q} \right)\le s \\
		\text{and}\quad
&	\max\left\{ -\frac{d}{q}, \, s_c - \frac2{1+\theta(\alpha-1)}  \right\}  
	< s < \min\left\{ s_c, \, s_c + \frac{(d-2)\alpha-d-2}{(1+\theta(\alpha-1))(\alpha-1)}
					 \right\}. 
\end{aligned}
\end{equation}
Then there exists a positive constant $C_1$ 
depending only on $d,$ $\alpha,$ $\gamma,$ $q,$ $s$ and $\theta$ such that 
the map $N$ defined by \eqref{mapN} satisfies  
\begin{equation}\label{l:Kato.est2}
\begin{aligned}
\|N(u) - N(v)\|_{\mathcal{K}^s(T)} 
	\le C_1 &\left(\|u\|_{\mathcal{K}^s(T)}
				+\|v\|_{\mathcal{K}^s(T)} \right)^{\theta(\alpha-1)} \\
		&\times\left(\|u\|_{L^\infty(0,T; L^q_{s_c}) }
				+\|v\|_{L^\infty(0,T; L^q_{s_c}) } \right)^{(1-\theta)(\alpha-1)} 
	\|u-v\|_{\mathcal{K}^s(T)}
\end{aligned}
\end{equation}
for all $u,v \in \mathcal{K}^s(T) \cap L^\infty(0,T ; L^q_{s_c}(\R^d))$ 
($u,v \in \mathcal{K}^s(T)$ if $\theta = 1$). 
\end{enumerate}
\end{lem}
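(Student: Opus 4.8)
The plan is to prove both estimates by one common scheme: bound the nonlinearity pointwise, estimate its weighted $L^{q/\alpha}$-norm by (products of) $L^q_s$- and $L^q_{s_c}$-norms via H\"older's inequality, feed the result into the smoothing estimate of Lemma~\ref{l:wLpLq}, and finally carry out the time integration as a Beta integral whose exponent is pinned down by the critical scaling \eqref{d:sc}.

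For assertion $(1)$ I would start from $\|N(u)(t)\|_{L^q_s}\le\int_0^t\|e^{(t-\tau)\Delta}(|\cdot|^\gamma F(u(\tau)))\|_{L^q_s}\,d\tau$ and apply Lemma~\ref{l:wLpLq} with $p=q/\alpha$, source weight $\sigma:=\alpha s-\gamma$ and target weight $s$. This produces the gain $(t-\tau)^{-a}$ with $a=\frac{d(\alpha-1)}{2q}+\frac{(\alpha-1)s-\gamma}{2}$, while the algebraic identity $\||\cdot|^\gamma F(u(\tau))\|_{L^{q/\alpha}_{\sigma}}=\|u(\tau)\|_{L^q_s}^{\alpha}$ (coming from $|\cdot|^{\sigma+\gamma}=|\cdot|^{\alpha s}$) turns the source norm into the Kato quantity. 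Using $\|u(\tau)\|_{L^q_s}\le\tau^{-(s_c-s)/2}\|u\|_{\mathcal K^s(T)}$ reduces matters to $\int_0^t(t-\tau)^{-a}\tau^{-\alpha(s_c-s)/2}\,d\tau$. The decisive point is the scaling identity $a+\tfrac{\alpha(s_c-s)}{2}=1+\tfrac{s_c-s}{2}$, a direct consequence of \eqref{d:sc}: it forces the integral to equal a finite Beta constant times $t^{-(s_c-s)/2}$, so that multiplying by $t^{(s_c-s)/2}$ and taking the supremum yields \eqref{l:Kato.est1}.

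For assertion $(2)$ I would first invoke the elementary bound $|F(u)-F(v)|\le C(|u|^{\alpha-1}+|v|^{\alpha-1})|u-v|$, valid since $\alpha>1$ (the $|v|$ term being symmetric). The parameter $\theta$ enters only through the H\"older split: among the $\alpha-1$ amplitude factors I would measure $\theta(\alpha-1)$ of them, together with the difference $|u-v|$, in the Kato weight $L^q_s$, and the remaining $(1-\theta)(\alpha-1)$ factors in the critical weight $L^q_{s_c}$. Matching H\"older exponents again forces $p=q/\alpha$, and matching weights fixes the source weight $\sigma_\theta=[1+\theta(\alpha-1)]s+(1-\theta)(\alpha-1)s_c-\gamma$. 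The sole structural difference from part $(1)$ is that now only $1+\theta(\alpha-1)$ of the factors carry time decay, so the integrand is $(t-\tau)^{-A}\tau^{-B}$ with $B=[1+\theta(\alpha-1)]\tfrac{s_c-s}{2}$; one checks that the identity $A+B=1+\tfrac{s_c-s}{2}$ survives unchanged, so the power of $t$ is again $-(s_c-s)/2$ and \eqref{l:Kato.est2} follows. The gain from $\theta<1$ is that the $L^\infty(0,T;L^q_{s_c})$ factors are time-uniform, which is why this form is the one usable in the uniqueness proof where the Kato norm need not be small.

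The main obstacle is the parameter bookkeeping rather than any single inequality. One must verify that \eqref{l:Kato.est.c1}--\eqref{l:Kato.est.c2} (resp.\ \eqref{l:Kato.est.c1'}--\eqref{l:Kato.est.c2'}) are exactly the conditions guaranteeing $(i)$ $1\le p=q/\alpha\le q\le\infty$; $(ii)$ the weight chain $-\tfrac dq<s\le\sigma_{(\theta)}<d(1-\tfrac1p)$ demanded by \eqref{l:wLpLq:cs}, including the endpoint provisos of Lemma~\ref{l:wLpLq} when $q=\infty$ or $p=1$; and $(iii)$ $B<1$, so the Beta integral converges at $\tau=0$ (the companion requirement $A<1$ at $\tau=t$ being automatic from $\alpha>1$ and $s<s_c$). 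Converting each analytic requirement into the stated bounds on $1/q$ and $s$, checking that the lower weight bound $s\ge\frac{\gamma}{\alpha-1}$ (equivalently $s\le\sigma$) is compatible with both upper bounds, and confirming that the admissible window for $s$ is nonempty precisely when $\alpha>\alpha_F(d,\gamma)$ — together with the extra restriction on $\theta$ when $d=1$ that keeps this window nonempty and preserves $B<1$ — is where the calculation must be done with care; the endpoint cases of the linear estimate are the most delicate ingredient.
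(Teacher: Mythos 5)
Your proposal follows essentially the same route as the paper's proof: apply the weighted smoothing estimate of Lemma \ref{l:wLpLq} with $p=q/\alpha$ and source weight $\alpha s-\gamma$ (resp.\ $\sigma_\theta=(\alpha-1)(\theta s+(1-\theta)s_c)+s-\gamma$ after the pointwise bound \eqref{diff.pt.est} and the three-factor H\"older split), use the exact identity $\||\cdot|^{\gamma}F(u)\|_{L^{q/\alpha}_{\sigma}}=\|u\|_{L^q_s}^{\alpha}$, and reduce the time integral to a Beta function whose exponents are fixed by the scaling identity $A+B=1+\tfrac{s_c-s}{2}$, with the stated conditions on $q$ and $s$ arising exactly from the weight chain of \eqref{l:wLpLq:cs} and the convergence of the Beta integral. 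The scheme, the choice of exponents and weights, and the source of each hypothesis all match the paper, so the proposal is correct.
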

\begin{rem}
Note that \eqref{l:Kato.est.c1'} and \eqref{l:Kato.est.c2'} for $\theta=1$ 
are equivalent to \eqref{l:Kato.est.c1} and \eqref{l:Kato.est.c2}, respectively. 
The estimate \eqref{l:Kato.est2} fails for $\theta=0$ as $C_1$ is divergent as 
$\theta\to0.$ 
\end{rem}
\begin{proof}
We first prove \eqref{l:Kato.est1}. We have 
\begin{align*}
\|N(u)(t)\|_{L^q_s} 
&\le C \int_0^t (t-\tau)^{-\frac{d(\alpha-1)}{2q} - \frac12 \{(\alpha-1)s - \gamma\}} 
	\| |\cdot|^{\gamma} F(u(\tau)) \|_{L^{\frac{q}{\alpha}}_{\sigma}} d\tau \\
\end{align*}
by Lemma \ref{l:wLpLq} with $q\equiv q,$ $p\equiv \frac{q}{\alpha},$ 
$s\equiv s$ and $s'\equiv \sigma := \alpha s-\gamma,$ 
provided that $1\le \frac{q}{\alpha} \le q \le \infty$ and 
$-\frac{d}{q} < s \le \alpha s -\gamma < d(1-\frac{\alpha}{q}),$ i.e., 
\begin{equation}\label{l:Kato.est:pr1}
	\alpha \le q \le \infty, \quad 
	\frac{\gamma}{\alpha-1} \le s
		\quad\text{and}\quad
	-\frac{d}{q} < s < \frac{\gamma+d}{\alpha}-\frac{d}{q}. 
\end{equation}
As $\| |\cdot|^{\gamma} F(u) \|_{L^{\frac{q}{\alpha}}_{\sigma}}= \| u \|_{L^q_s}^{\alpha}, $
we have 
\begin{align*}
\|N(u)(t)\|_{L^q_s} 
&\le C \int_0^t (t-\tau)^{-\frac{d(\alpha-1)}{2q} - \frac12 \{(\alpha-1)s - \gamma\}} 
	\tau^{-\frac{(s_c-s)\alpha}{2}} d\tau \times  \|u \|_{\mathcal{K}^s(T)}^{\alpha}, 
\end{align*}
where the last integral is bounded by 
\begin{equation*}
	t^{-\frac{s_c-s}2} B\left(\frac{\alpha-1}2 (s_c -s), 1- \frac{(s_c-s)\alpha}2\right),
\end{equation*}
where $B:(0,\infty)^2\rightarrow \R_{>0}$ is the beta function given by $B(x,y):=\int_0^1t^{x-1}(1-t)^{y-1}dt$, which is convergent if and only if 
\begin{equation}\label{l:Kato.est:pr2}
	s_c - \frac2{\alpha} < s < s_c.
\end{equation}
Gathering \eqref{l:Kato.est:pr1} and \eqref{l:Kato.est:pr2}, 
we have condition \eqref{l:Kato.est.c2}. For such an $s$ to exist, 
it suffices to take $\gamma,$ $\alpha$ and $q$ so that 
conditions \eqref{t:HH.LWP.c0}  and \eqref{l:Kato.est.c1} are met. 

	\smallbreak
We next show \eqref{l:Kato.est2}. 
Since there exists a constant $C=C(\alpha)$ such that 
\begin{equation}\label{diff.pt.est}
	|F(u)-F(v)| \le C (|u|^{\alpha-1} + |v|^{\alpha-1})|u-v| 
	\quad\text{for all} \quad u,v \in \mathbb{C},
\end{equation}
we have 
\begin{align*}
\|N(u)(t) - N(v)(t)\|_{L^q_s} 
&\le C \int_0^t (t-\tau)^{-\frac{d(\alpha-1)}{2q} - \frac12 \{(\alpha-1) (\theta s + (1-\theta) s_c) -\gamma\}} \\
	& \quad \times \left\| |\cdot|^{\gamma} 
	(|u|^{\alpha-1} + |v|^{\alpha-1})|u-v| \right\|_{L^{\frac{q}{\alpha}}_{\sigma}} 
	d\tau,
\end{align*}
thanks to Lemma \ref{l:wLpLq} with $q\equiv q,$ $p\equiv \frac{q}{\alpha},$ 
$s\equiv s$ and $s'\equiv \sigma :=  (\alpha-1) (\theta s + (1-\theta) s_c) +s-\gamma,$ 
provided that $1\le \frac{q}{\alpha} \le q \le \infty$ and 
$-\frac{d}{q} < s \le (\alpha-1) (\theta s + (1-\theta) s_c) +s-\gamma < d(1-\frac{\alpha}{q}),$ $\theta \in (0,1],$ i.e., 
\begin{equation}\label{l:Kato.est:pr1'}
\begin{aligned}
&	\alpha \le q \le \infty, \quad 
	s_c - \frac{d}{\theta} \left( \frac{2}{d(\alpha-1)} -\frac{1}{q} \right) \le s\\
		\text{and}\quad
&	-\frac{d}{q} < s < s_c + \frac1{1+\theta(\alpha-1)}
	\left( d-2 -\frac{2+\gamma}{\alpha-1} \right). 
\end{aligned}
\end{equation}
By H\"older's inequality with 
$\frac{\alpha}{q} = \frac{\theta(\alpha-1)}{q} + \frac{(1-\theta)(\alpha-1)}{q} + \frac1{q},$  
we have 
\begin{align*}
& \left\| |\cdot|^{\gamma} 
	(|u|^{\alpha-1} + |v|^{\alpha-1})|u-v| \right\|_{L^{\frac{q}{\alpha}}_{\sigma}}  \\
&\le \left( \|u\|_{L^q_s} + \|v\|_{L^q_s} \right)^{\theta(\alpha-1)}
	\left( \|u\|_{L^q_{s_c}} + \|v\|_{L^q_{s_c}} \right)^{(1-\theta)(\alpha-1)} 
		\, \|u-v\|_{L^q_s}.
\end{align*}
Thus, 
\begin{align*}
&\|N(u)(t) - N(v)(t)\|_{L^q_s} \\
&\le C t^{-\frac{s_c-s}2}
	B\left(\theta\frac{\alpha-1}2 (s_c -s), 1- \frac{\theta (\alpha -1)+ 1}2 (s_c-s)\right) \\
&\times \left(\|u\|_{\mathcal{K}^s(T)}
				+\|v\|_{\mathcal{K}^s(T)} \right)^{\theta(\alpha-1)} 
		\left(\|u\|_{L^\infty(0,T; L^q_{s_c}) }
				+\|v\|_{L^\infty(0,T; L^q_{s_c}) } \right)^{(1-\theta)(\alpha-1)} 
 	\|u-v\|_{\mathcal{K}^s(T)} \\
\end{align*}
in which the last beta function is convergent if $\theta>0$ and 
\begin{equation}\label{l:Kato.est:pr2'}
	s_c - \frac2{\theta(\alpha-1)+1} < s < s_c.
\end{equation}
Gathering \eqref{l:Kato.est:pr1'} and \eqref{l:Kato.est:pr2'}, we deduce that 
the restrictions for $s$ are \eqref{l:Kato.est.c2'}. 
Consequently, for such an $s$ to exist, it suffices to take $q$ such that \eqref{l:Kato.est.c1'}. 
Finally, for such a $q$ to exist, one must have 
$0<\frac1{d(1+\theta(\alpha-1))} \{ \frac2{\alpha-1} 
	+ \theta ( d- \frac{2+\gamma}{\alpha-1} ) \},$ i.e., 
$\alpha>1+\frac{2+\gamma}{d} - \frac2{\theta d}$ and $0 < \frac2{d(\alpha-1)},$ 
both of which hold thanks to \eqref{t:HH.LWP.c0}. This concludes the proof of the lemma. 
\end{proof}

The following is the stability estimate for the critical norm. 
\begin{lem}	\label{l:crt.est}
Let $T \in (0,\infty]$ and $d\in\mathbb{N}.$ 
Let $\gamma\in\R$ and $\alpha\in\R$ satisfy \eqref{t:HH.LWP.c0} . 
Let $q\in [1,\infty]$ be such that 
\begin{equation}\label{l:crt.est.c1}
	\alpha\le q \le \infty 
		\quad\text{and}\quad
	\frac1{q} < \min \left\{ \frac{2}{d(\alpha-1)}, \,
		\frac{2}{d(\alpha-1)} + \frac{(d-2)\alpha - d -\gamma}{d(\alpha-1)^2} \right\}
\end{equation}
and let $s \in \R$ be such that 
\begin{equation}\label{l:crt.est.c2}
	 s_c - \frac{d(\alpha-1)}{\alpha} \left(\frac{2}{d(\alpha-1)} - \frac1{q} \right) \le s 
	< \min \left\{ s_c, \, s_c + \frac{(d-2)\alpha - d -\gamma}{\alpha(\alpha-1)} \right\}, 
\end{equation}
where $s_c$ is as in \eqref{d:sc}. 
Then there exists a positive constant $C_2$ 
depending only on $d,$ $\alpha,$ $\gamma,$ $q$ and $s$ such that 
the map $N$ defined by \eqref{mapN} satisfies 
\begin{equation}\nonumber
	\|N(u)\|_{L^\infty(0,T ; L^q_{s_c})} 
		\le C_2 \|u\|_{\mathcal{K}^s(T)}^{\alpha}
\end{equation}
for all $u,v \in \mathcal{K}^s(T).$ 
\end{lem}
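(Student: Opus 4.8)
The plan is to follow verbatim the scheme of the stability estimate \eqref{l:Kato.est1} in Lemma \ref{l:Kato.est}, the sole change being that the target is now the critical space $L^q_{s_c}$ measured in $L^\infty_t$ rather than the time-weighted Kato space. First I would apply the linear estimate (Lemma \ref{l:wLpLq}) to the Duhamel operator \eqref{mapN}, choosing the source exponents $p \equiv \frac{q}{\alpha}$, $s \equiv \sigma := \alpha s - \gamma$ and the target exponents $q \equiv q$, $s' \equiv s_c$. Since $F(u) = |u|^{\alpha-1}u$, the pointwise identity
\begin{equation*}
\big\| |\cdot|^{\gamma} F(u) \big\|_{L^{\frac{q}{\alpha}}_{\sigma}} = \|u\|_{L^q_s}^{\alpha}
\end{equation*}
holds precisely because of the choice $\sigma = \alpha s - \gamma$: raising $|u|^{\alpha}$ to the power $\frac{q}{\alpha}$ and carrying the weight $|x|^{(\sigma+\gamma)\frac{q}{\alpha}} = |x|^{sq}$ reconstitutes the $L^q_s$-norm. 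This yields
\begin{equation*}
\|N(u)(t)\|_{L^q_{s_c}}
\le C \int_0^t (t-\tau)^{-\frac{d(\alpha-1)}{2q} - \frac{\alpha s - \gamma - s_c}{2}} \|u(\tau)\|_{L^q_s}^{\alpha}\, d\tau.
\end{equation*}

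Next I would insert the defining bound $\|u(\tau)\|_{L^q_s} \le \tau^{-\frac{s_c - s}{2}} \|u\|_{\mathcal{K}^s(T)}$ of the Kato norm and reduce the matter to the scalar integral $\int_0^t (t-\tau)^{a-1}\tau^{b-1}\,d\tau = t^{a+b-1} B(a,b)$ with $a = 1 - \frac{d(\alpha-1)}{2q} - \frac{\alpha s - \gamma - s_c}{2}$ and $b = 1 - \frac{(s_c - s)\alpha}{2}$. The crucial point is the vanishing of the exponent $a + b - 1$: a direct computation gives $a + b - 1 = 1 - \frac{d(\alpha-1)}{2q} - \frac{(\alpha-1)s_c - \gamma}{2}$, and since $(\alpha-1)s_c - \gamma = 2 - \frac{d(\alpha-1)}{q}$ by the definition \eqref{d:sc} of $s_c$, this is exactly $0$. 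Thus the $t$-power disappears and the bound is uniform in $t \in (0,T)$, which is the whole reason for taking $s' = s_c$; the resulting constant $C_2 = C\,B(a,b)$ is finite as soon as $a,b > 0$, i.e. $s_c - \frac{2}{\alpha} < s < s_c$.

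Finally I would collect the constraints. The admissibility hypotheses of Lemma \ref{l:wLpLq} demand $\alpha \le q \le \infty$ together with $-\frac{d}{q} < s_c \le \sigma < d\big(1 - \frac{\alpha}{q}\big)$; the leftmost inequality $-\frac{d}{q} < s_c$ is automatic because $s_c + \frac{d}{q} = \frac{2+\gamma}{\alpha-1} > 0$ under \eqref{t:HH.LWP.c0}. The remaining two translate into $s \ge \frac{s_c+\gamma}{\alpha}$ and $s < \frac{d+\gamma}{\alpha} - \frac{d}{q}$, and short manipulations using \eqref{d:sc} identify these respectively with $s \ge s_c - \frac{d(\alpha-1)}{\alpha}\big(\frac{2}{d(\alpha-1)} - \frac1q\big)$ and $s < s_c + \frac{(d-2)\alpha - d - \gamma}{\alpha(\alpha-1)}$. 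Because $\frac{s_c+\gamma}{\alpha} > s_c - \frac{2}{\alpha}$, this lower bound already absorbs the Beta condition $s > s_c - \frac{2}{\alpha}$, so the combined admissible range for $s$ is exactly \eqref{l:crt.est.c2}; requiring this interval to be nonempty then reduces, again by \eqref{d:sc}, to the two inequalities for $\frac1q$ in \eqref{l:crt.est.c1}.

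I expect no genuine analytic obstacle, since all the hard linear analysis is already encapsulated in Lemma \ref{l:wLpLq}; the work is essentially bookkeeping of exponents. The only points requiring care are the endpoints of Lemma \ref{l:wLpLq}: when $q = \alpha$ (so $p = 1$) one additionally needs the source weight $\sigma \le 0$, and when $q = \infty$ one needs the target weight $s_c \ge 0$ (which indeed holds, as $s_c = \frac{2+\gamma}{\alpha-1} > 0$ there), and I would verify that both are compatible with \eqref{l:crt.est.c2}. Confirming the cancellation $a + b - 1 = 0$ and matching the reformulated bounds on $s$ to \eqref{l:crt.est.c1}--\eqref{l:crt.est.c2} is thus the main, and purely computational, step.
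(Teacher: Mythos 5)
Your argument is correct and follows essentially the same route as the paper's proof: the identical application of Lemma \ref{l:wLpLq} with source pair $\bigl(\tfrac{q}{\alpha},\,\alpha s-\gamma\bigr)$ and target pair $(q,\,s_c)$, the same reduction to a Beta function whose accompanying power of $t$ vanishes by the definition \eqref{d:sc} of $s_c$, and the same translation of the admissibility and convergence constraints into \eqref{l:crt.est.c1}--\eqref{l:crt.est.c2}. Your closing remark about checking the endpoint hypotheses of Lemma \ref{l:wLpLq} when $p=1$ or $q=\infty$ is a detail the paper leaves implicit, but it does not change the argument.
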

\begin{proof}
Let $T>0$ and $u,v \in \mathcal{K}^s(T).$ We have 
\begin{align*}
\|N(u)(t)\|_{L^q_{s_c}} 
&\le C \int_0^t (t-\tau)^{-\frac{d(\alpha-1)}{2q} - \frac12 (\alpha s -\gamma- s_c)} 
	 \| u(\tau) \|_{L^q_s}^{\alpha} d\tau \\
&\le C B\left(\frac{\alpha}2 (s_c-s), 1 - \frac{(s_c-s)\alpha}2 \right)  
	\times  \|u \|_{\mathcal{K}^s(T)}^{\alpha}, 
\end{align*}
thanks to Lemma \ref{l:wLpLq} with $q\equiv q,$ $p\equiv \frac{q}{\alpha},$ 
$s\equiv s_c$ and $s'\equiv \alpha s-\gamma,$ 
provided that $1\le \frac{q}{\alpha} \le q \le \infty$ and 
$-\frac{d}{q} < s_c \le \alpha s -\gamma < d(1-\frac{\alpha}{q}),$ i.e., 
\begin{equation}\nonumber
	-2<\gamma, \quad 
	\alpha \le q \le \infty	\quad\text{and}\quad 
	\frac{s_c+\gamma}{\alpha} \le s < \frac{d+\gamma}{\alpha}-\frac{d}{q}. 
\end{equation}
The final beta function is convergent if \eqref{l:Kato.est:pr2} holds. 
Since $s_c - \frac2{\alpha} \le \frac{s_c+\gamma}{\alpha},$
the restrictions on $s$ are \eqref{l:crt.est.c2}. 
For such an $s$ to exist, $q$ must satisfy \eqref{l:crt.est.c1} in addition to 
$\alpha\le q \le\infty.$ 
Indeed, $\frac{s_c + \gamma}{\alpha}<s_c$ 
is equivalent to $\frac1{q} < \frac2{d(\alpha-1)}$ and 
$\frac{s_c+\gamma}{\alpha} < s_c$ is equivalent to 
$\frac1{q} < \frac1{\alpha-1} \left(1-\frac{2+\gamma}{d(\alpha-1)} \right).$ 
This completes the proof of the lemma. 
\end{proof}
\begin{rem}	\label{r:crt.est2}
Note that the above lemma along with Lemma \ref{l:wLpLq} imply that 
a solution $u \in \mathcal{K}^s(T)$ yields the regularity 
$u\in C([0,T] ; L^q_{s_c}(\R^d)),$ if $u_0 \in L^{q}_{s_c}(\R^d).$ Thus, 
if we allow the abuse of notation, the equivalence 
$\mathcal{K}^s(T) = C([0,T] ; L^q_{s_c}(\R^d)) \cap \mathcal{K}^s(T)$ 
holds as solution spaces of \eqref{HH}. 
\end{rem}

\subsubsection{Subcritical case}
The following are the stability and contraction estimates in the subcritical regime. 
\begin{lem}
\label{l:Kato.est.sub}
Let $T \in (0,\infty]$ and $d\in\mathbb{N}.$ Let $\gamma\in\R$ and $\alpha\in\R$ satisfy 
\eqref{t:HH.LWP.c0}. 
Fix $\tilde s\in\R$ so that 
\begin{equation}\label{l:Kato.est.sub.c0}
	\tilde s < \frac{2+\gamma}{\alpha-1}.
\end{equation}
Let $q\in [1,\infty]$ be such that 
\begin{equation}\label{l:Kato.est.sub.c1}
	\alpha\le q \le \infty
		\quad\text{and}\quad
	\frac1{q} < \min \left\{ \frac{2}{d(\alpha-1)}, \,
		\frac1{\alpha} \left(1 - \frac{\gamma}{d(\alpha-1)}\right), \, 
		\frac1{d} \left(\frac{2+\gamma}{\alpha-1} -\tilde s \right)\right\}
\end{equation}
and let $s \in \R$ be such that 
\begin{equation}\label{l:Kato.est.sub.c2}
	\frac{\gamma}{\alpha-1}\le s
		\quad\text{and}\quad
	\max\left\{\tilde s - \frac2{\alpha}, \, - \frac{d}{q} \right\} < s 
	< \min \left\{ \frac{d+\gamma}{\alpha} - \frac{d}{q}, \, s_c \right\}, 
\end{equation}
where $s_c$ is as in \eqref{d:sc}, 
Then there exist positive constants $\tilde C_0$ and $\tilde C_1$ 
depending only on $d,$ $\alpha,$ $\gamma,$ $q,$ $\tilde s$ and $s$ such that 
the map $N$ defined by \eqref{mapN} satisfies 
\begin{equation}\label{l:Kato.est.sub1}
\|N(u)\|_{\tilde{\mathcal{K}}^s(T)} 
	\le \tilde C_0 T^{\frac{\alpha-1}2(s_c-\tilde s)} \|u\|_{\tilde{\mathcal{K}}^s(T)}^{\alpha}
\end{equation}
and 
\begin{equation}\label{l:Kato.est.sub2}
\|N(u) - N(v)\|_{\tilde{\mathcal{K}}^s(T)} 
	\le \tilde C_1 T^{\frac{\alpha-1}2(s_c-\tilde s)} \left( \|u\|_{\tilde{\mathcal{K}}^s(T)}^{\alpha-1} + \|v\|_{\tilde{\mathcal{K}}^s(T)}^{\alpha-1} \right)
	\|u-v\|_{\tilde{\mathcal{K}}^s(T)}
\end{equation}
for all $u,v \in \tilde{\mathcal{K}}^s(T).$ 
\end{lem}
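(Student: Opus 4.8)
The plan is to follow the same scheme as in the critical estimates (Lemmas \ref{l:Kato.est} and \ref{l:crt.est}), the only essential new feature being that one must track the explicit dependence on $T$, which will produce the gain $T^{\frac{\alpha-1}2(s_c-\tilde s)}$. First I would apply the linear estimate Lemma \ref{l:wLpLq} inside Duhamel's formula \eqref{mapN} with the choice $p\equiv q/\alpha$, $s\equiv s$ and $s'\equiv\sigma:=\alpha s-\gamma$. This is admissible precisely when $1\le q/\alpha\le q\le\infty$ and $-d/q<s\le\alpha s-\gamma<d(1-\alpha/q)$, which translate into $\alpha\le q$, $\frac{\gamma}{\alpha-1}\le s$ and $-\frac dq<s<\frac{d+\gamma}\alpha-\frac dq$, i.e.\ exactly the left-hand constraint and first upper constraint in \eqref{l:Kato.est.sub.c2} (endpoint cases $q=\alpha$ or $q=\infty$ being covered by the corresponding clauses of Lemma \ref{l:wLpLq}). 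The resulting time weight is $(t-\tau)^{-a}$ with
\[
	a=\frac{d(\alpha-1)}{2q}+\frac{(\alpha-1)s-\gamma}{2}=1-\frac{(\alpha-1)(s_c-s)}{2},
\]
where the second identity uses the definition \eqref{d:sc} of $s_c$.

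For the stability estimate \eqref{l:Kato.est.sub1}, the key algebraic identity is the same as in the critical case, namely $\||\cdot|^\gamma F(u(\tau))\|_{L^{q/\alpha}_\sigma}=\|u(\tau)\|_{L^q_s}^{\alpha}$, valid because $\sigma+\gamma=\alpha s$. Inserting the definition of the $\tilde{\mathcal K}^s(T)$-norm gives $\|u(\tau)\|_{L^q_s}^\alpha\le \tau^{-\frac{\alpha(\tilde s-s)}{2}}\|u\|_{\tilde{\mathcal K}^s(T)}^{\alpha}$, so that
\[
	\|N(u)(t)\|_{L^q_s}\le C\|u\|_{\tilde{\mathcal K}^s(T)}^{\alpha}\int_0^t(t-\tau)^{-a}\tau^{-\frac{\alpha(\tilde s-s)}{2}}\,d\tau .
\]
Scaling $\tau=t\sigma$ identifies the integral as $t^{\,1-a-\frac{\alpha(\tilde s-s)}{2}}B\bigl(1-\tfrac{\alpha(\tilde s-s)}2,\,1-a\bigr)$, and the Beta function converges iff $a<1$ and $\tfrac{\alpha(\tilde s-s)}2<1$, that is iff $s<s_c$ and $s>\tilde s-\tfrac2\alpha$, the remaining two constraints in \eqref{l:Kato.est.sub.c2}. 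Multiplying by the weight $t^{\frac{\tilde s-s}2}$ from the $\tilde{\mathcal K}^s$-norm and using the formula for $a$, a short computation shows that $\frac{\tilde s-s}2+1-a-\frac{\alpha(\tilde s-s)}2=\frac{\alpha-1}2(s_c-\tilde s)$, so all $s$-dependence in the time exponent cancels; since $\tilde s<s_c$ this exponent is positive, and taking the supremum over $t\in[0,T]$ produces exactly $T^{\frac{\alpha-1}2(s_c-\tilde s)}$.

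The contraction estimate \eqref{l:Kato.est.sub2} is obtained in the same way after replacing $|F(u)|=|u|^\alpha$ by the Lipschitz-type inequality \eqref{diff.pt.est} and distributing the weight by H\"older's inequality with $\frac\alpha q=\frac{\alpha-1}q+\frac1q$, which yields
\[
	\bigl\| |\cdot|^\gamma(|u|^{\alpha-1}+|v|^{\alpha-1})|u-v|\bigr\|_{L^{q/\alpha}_\sigma}
		\le\bigl(\|u\|_{L^q_s}^{\alpha-1}+\|v\|_{L^q_s}^{\alpha-1}\bigr)\|u-v\|_{L^q_s}.
\]
The time integrand again carries the total power $\tau^{-\frac{\alpha(\tilde s-s)}2}$, so the identical Beta-function analysis delivers the factor $T^{\frac{\alpha-1}2(s_c-\tilde s)}$ together with the claimed dependence on $\|u\|_{\tilde{\mathcal K}^s(T)}$, $\|v\|_{\tilde{\mathcal K}^s(T)}$ and $\|u-v\|_{\tilde{\mathcal K}^s(T)}$. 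Finally I would record that a valid $s$ exists precisely when the three upper bounds for $1/q$ in \eqref{l:Kato.est.sub.c1} hold: these are equivalent, respectively, to $\frac{\gamma}{\alpha-1}<s_c$, to $\frac{\gamma}{\alpha-1}<\frac{d+\gamma}\alpha-\frac dq$, and (by direct rearrangement) to the subcriticality $\tilde s<s_c$, the remaining pairwise compatibility of the lower and upper bounds following from \eqref{t:HH.LWP.c0}.

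I expect the main obstacle to be purely the exponent bookkeeping: verifying that after multiplication by the norm weight $t^{(\tilde s-s)/2}$ the power collapses to the single positive exponent $\frac{\alpha-1}2(s_c-\tilde s)$ independent of $s$, and that the admissible range \eqref{l:Kato.est.sub.c1}--\eqref{l:Kato.est.sub.c2} is exactly the intersection of the linear-estimate constraints with the two Beta-convergence constraints $a<1$ and $\frac{\alpha(\tilde s-s)}2<1$. The positivity of this exponent, equivalent to $\tilde s<s_c$, is what makes the gain $T^{\frac{\alpha-1}2(s_c-\tilde s)}$ small for small $T$ and thereby upgrades small-data well-posedness to arbitrary-data local well-posedness in the subcritical regime.
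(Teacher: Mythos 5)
Your proposal is correct and follows essentially the same route as the paper: apply Lemma \ref{l:wLpLq} with $p\equiv q/\alpha$ and source weight $\alpha s-\gamma$, identify the resulting time integral as a Beta function whose convergence gives the constraints $\tilde s-\frac2\alpha<s<s_c$, and observe that after multiplying by the $\tilde{\mathcal K}^s$-weight the exponent collapses to $\frac{\alpha-1}{2}(s_c-\tilde s)>0$, yielding the factor $T^{\frac{\alpha-1}{2}(s_c-\tilde s)}$. Your exponent bookkeeping and the H\"older step for the difference estimate (which the paper omits) check out, the only cosmetic issue being that the roles of $s$ and $s'$ in the citation of Lemma \ref{l:wLpLq} are swapped relative to that lemma's statement — a labeling the paper itself uses — while the admissibility conditions you actually derive are the correct ones.
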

\begin{rem}
Note that $\frac1{q}<\frac1{d} \left(\frac{2+\gamma}{\alpha-1} -\tilde s \right)$ 
in \eqref{l:Kato.est.sub.c1} amounts to $\tilde s<s_c,$ 
so the power of $T$ in \eqref{l:Kato.est.sub1} and \eqref{l:Kato.est.sub2} is positive. 
\end{rem}
\begin{proof}[Proof of Lemma \ref{l:Kato.est.sub}]
We have 
\begin{align*}
\|N(u)(t)&\|_{L^q_{s}} 
\le C \int_0^t (t-\tau)^{-\frac{d(\alpha-1)}{2q} - \frac12 ((\alpha-1) s -\gamma)} 
	 \| u(\tau) \|_{L^q_s}^{\alpha} d\tau \\
&\le C t^{-\frac12(s-\tilde s)} t^{\frac{\alpha-1}2(s_c-\tilde s)} B\left(\frac{(\alpha-1)}2 (s_c-s), 1 - \frac{(\tilde s-s)\alpha}2 \right)  
	\times  \|u \|_{\tilde{\mathcal{K}}^s(T)}^{\alpha}, 
\end{align*}
thanks to Lemma \ref{l:wLpLq} with
$q\equiv q,$ $p\equiv \frac{q}{\alpha},$ 
$s\equiv \tilde s$ and $s'\equiv \alpha s-\gamma,$ 
provided that $1\le \frac{q}{\alpha} \le q \le \infty$ and 
$-\frac{d}{q} < s \le \alpha s -\gamma < d(1-\frac{\alpha}{q}),$ i.e., 
\begin{equation}\nonumber
	\alpha \le q \le \infty, \quad
	\frac{\gamma}{\alpha-1} \le s	\quad\text{and}\quad 
	-\frac{d}{q} < s < \frac{d+\gamma}{\alpha}-\frac{d}{q}. 
\end{equation}
The final beta function is convergent if $\tilde s-\frac2{\alpha}<s< s_c.$
Thus, the restrictions on $s$ are \eqref{l:Kato.est.sub.c2}. 
For such an $s$ to exist, $q$ must satisfy \eqref{l:Kato.est.sub.c1}. 
Finally, for such a $q$ to exist, 
we immediately see that $\tilde s$ must satisfy \eqref{l:Kato.est.sub.c0}. 

The proof for the difference is similar to the above so we omit the details. 
This completes the proof of the lemma. 
\end{proof}

\begin{lem}
\label{l:subcrt.est}
Let $T \in (0,\infty]$ and $d\in\mathbb{N}.$ 
Let $\gamma\in\R$ and $\alpha\in\R$ satisfy \eqref{t:HH.LWP.c0}. 
Fix $\tilde s$ so that \eqref{t:HH.LWP.sub.cs} is satisfied. 
Let $q\in [1,\infty]$ be such that 
\begin{equation}\label{l:subcrt.est.c1}
	\alpha\le q \le \infty
		\quad\text{and}\quad
	-\frac{\tilde s}{d} <\frac1{q} 
	<\min\left\{ \frac1{\alpha} \left(1 -\frac{\tilde s}{d}\right), \, 
			\frac1{d} \left(\frac{2+\gamma}{\alpha-1} -\tilde s \right) \right\}
\end{equation}
and let $s \in \R$ be such that 
\begin{equation}\label{l:subcrt.est.c2}
	\frac{\tilde s+\gamma}{\alpha} \le s 
	< \min \left\{ \frac{d+\gamma}{\alpha} - \frac{d}{q}, \tilde s \right\}, 
\end{equation}
where $s_c$ is as in \eqref{d:sc}. 
Then there exists a positive constant $\tilde C_2$ 
depending only on $d,$ $\gamma,$ $\alpha,$ $\tilde s,$ $q$ and $s$ such that 
the map $N$ defined by \eqref{mapN} satisfies 
\begin{equation}\nonumber 
	\|N(u)\|_{L^\infty(0,T ; L^q_{\tilde s})} 
		\le \tilde C_2 T^{\frac{\alpha-1}2(s_c-\tilde s)} \|u\|_{\tilde{\mathcal{K}}^s(T)}^{\alpha}
\end{equation}
for all $u\in \tilde{\mathcal{K}}^s(T).$ 
\end{lem}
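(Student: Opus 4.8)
The plan is to follow the argument in the proofs of Lemmas~\ref{l:crt.est} and~\ref{l:Kato.est.sub}, the only change being that the target weight is now $\tilde s$ rather than $s$ or $s_c$. First I would expand the Duhamel term $N(u)(t)=\int_0^t e^{(t-\tau)\Delta}\{|\cdot|^\gamma F(u(\tau))\}\,d\tau$ and apply the linear estimate Lemma~\ref{l:wLpLq} to each time slice, estimating into $L^q_{\tilde s}$ from $L^{q/\alpha}_{\sigma}$ with input weight $\sigma:=\alpha s-\gamma$. This produces the kernel factor $(t-\tau)^{-\frac{d(\alpha-1)}{2q}-\frac12(\alpha s-\gamma-\tilde s)}$, legitimate exactly when $\alpha\le q\le\infty$ and $-\frac{d}{q}<\tilde s\le \alpha s-\gamma<d(1-\frac{\alpha}{q})$; the left inequality is $\tilde s>-\frac{d}{q}$ from \eqref{l:subcrt.est.c1}, while the remaining chain is $\frac{\tilde s+\gamma}{\alpha}\le s<\frac{d+\gamma}{\alpha}-\frac{d}{q}$ from \eqref{l:subcrt.est.c2}. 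I would then invoke the pointwise identity $\||\cdot|^\gamma F(u(\tau))\|_{L^{q/\alpha}_{\sigma}}=\|u(\tau)\|_{L^q_s}^{\alpha}$ together with the definition of the $\tilde{\mathcal{K}}^s$-norm to obtain $\|u(\tau)\|_{L^q_s}^{\alpha}\le \tau^{-\frac{\alpha(\tilde s-s)}{2}}\|u\|_{\tilde{\mathcal{K}}^s(T)}^{\alpha}$.

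This reduces everything to the scalar integral $\int_0^t(t-\tau)^{-\frac{d(\alpha-1)}{2q}-\frac12(\alpha s-\gamma-\tilde s)}\tau^{-\frac{\alpha(\tilde s-s)}{2}}\,d\tau = t^{\frac{\alpha-1}{2}(s_c-\tilde s)}B(\cdots)$; the exponent of $t$ is the sum of the two kernel exponents plus one, and substituting $s_c=\frac{2+\gamma}{\alpha-1}-\frac{d}{q}$ confirms that it collapses to $\frac{\alpha-1}{2}(s_c-\tilde s)$. The beta integral converges precisely when both of its arguments are positive, i.e.\ $s>\tilde s-\frac{2}{\alpha}$ and $s<\frac{(\alpha-1)s_c+\tilde s}{\alpha}$; the former follows from $s\ge\frac{\tilde s+\gamma}{\alpha}$ together with the upper bound $\tilde s<\frac{2+\gamma}{\alpha-1}$ in \eqref{t:HH.LWP.sub.cs}, and the latter from $s<\tilde s$ combined with $\tilde s<s_c$, the inequality $\tilde s<s_c$ being equivalent to $\frac1{q}<\frac1{d}(\frac{2+\gamma}{\alpha-1}-\tilde s)$ in \eqref{l:subcrt.est.c1}. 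Since $\tilde s<s_c$ also makes the power $\frac{\alpha-1}{2}(s_c-\tilde s)$ positive, I would bound $t^{\frac{\alpha-1}{2}(s_c-\tilde s)}\le T^{\frac{\alpha-1}{2}(s_c-\tilde s)}$ and take the supremum over $t\in(0,T)$, obtaining the asserted estimate with $\tilde C_2$ the product of the beta value and the constant from Lemma~\ref{l:wLpLq}.

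I expect the only delicate bookkeeping to be the endpoint cases of Lemma~\ref{l:wLpLq}, but these turn out to be automatically consistent with the hypotheses: when $q=\infty$ the requirement $-\frac{d}{q}<\tilde s$ becomes $\tilde s>0$, supplying the lemma's condition that the output weight be nonnegative, while when $q=\alpha$ (so that the input exponent $q/\alpha$ equals $1$) the strict upper bound $\alpha s-\gamma<d(1-\frac{\alpha}{q})=0$ forces the input weight $\sigma<0$, supplying the lemma's condition that the input weight be nonpositive. Hence each admissible configuration falls into one of the cases already covered by Lemma~\ref{l:wLpLq}, and the remainder of the proof is a direct transcription of the subcritical computation carried out for Lemma~\ref{l:Kato.est.sub}.
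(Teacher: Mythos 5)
Your proposal is correct and follows essentially the same route as the paper: apply Lemma \ref{l:wLpLq} with output weight $\tilde s$ and input space $L^{q/\alpha}_{\alpha s-\gamma}$, reduce to a beta integral whose total $t$-power collapses to $\frac{\alpha-1}{2}(s_c-\tilde s)$, and check convergence from \eqref{t:HH.LWP.sub.cs}, \eqref{l:subcrt.est.c1} and \eqref{l:subcrt.est.c2} exactly as you do. Your explicit verification of the endpoint cases $q=\alpha$ and $q=\infty$ of the linear estimate is a detail the paper leaves implicit, but it is consistent with the hypotheses and does not change the argument.
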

\begin{proof}
We have 
\begin{align*}
\|N(u)(t)&\|_{L^q_{\tilde s}} 
\le C \int_0^t (t-\tau)^{-\frac{d(\alpha-1)}{2q} - \frac12 (\alpha s -\gamma- \tilde s)} 
	 \| u(\tau) \|_{L^q_s}^{\alpha} d\tau \\
&\le C t^{\frac{\alpha-1}2(s_c-\tilde s)} B\left(\frac12 \{(\alpha-1)(s_c-s) + \tilde s-s\}, 1 - \frac{(\tilde s-s)\alpha}2 \right)  
	\times  \|u \|_{\tilde{\mathcal{K}}^s(T)}^{\alpha}, 
\end{align*}
thanks to Lemma \ref{l:wLpLq} with
$q\equiv q,$ $p\equiv \frac{q}{\alpha},$ 
$s\equiv \tilde s$ and $s'\equiv \alpha s-\gamma,$ 
provided that $1\le \frac{q}{\alpha} \le q \le \infty$ and 
$-\frac{d}{q} < \tilde s \le \alpha s -\gamma < d(1-\frac{\alpha}{q}),$ i.e., 
\begin{equation}\nonumber
	\alpha \le q \le \infty, \quad
	-\frac{\tilde s}{d} <\frac1{q}		\quad\text{and}\quad 
	\frac{\tilde s+\gamma}{\alpha} \le s < \frac{d+\gamma}{\alpha}-\frac{d}{q}. 
\end{equation}
The final beta function is convergent if $\tilde s-\frac2{\alpha}<s<\tilde s< s_c.$
Since $\tilde s-\frac2{\alpha}<\frac{\tilde s+\gamma}{\alpha}$ 
(by $\tilde s < s_c < \frac{2+\gamma}{\alpha-1}$), the restrictions on $s$ are 
\eqref{l:subcrt.est.c2}. 
For such an $s$ to exist, $q$ must satisfy \eqref{l:subcrt.est.c1} 
and $\frac{\gamma}{\alpha-1} < \tilde s.$ 
Finally, for such a $q$ to exist, $\tilde s$ must satisfy \eqref{t:HH.LWP.sub.cs}
since 
\[
	-\frac{d}{\alpha-1}
	<\max\left\{ -\frac{d}{\alpha}, \, \frac{\gamma}{\alpha-1} \right\} \quad\text{and}\quad
	\frac{2+\gamma}{\alpha-1} < d.
\]
This completes the proof of the lemma. 
\end{proof}

\subsubsection{Upgrade of regularity}
The following lemma is used to show the regularity of the $L^q_{\tilde{s}}(\R^d)$-mild solution. 
\begin{lem}	\label{l:b.strap}
Let $p,q \in [1,\infty]$ and $s,s' \in \R.$ Under condition \eqref{t:HH.LWP.c0}, 
let pairs $(q,s)$ and $(p,s')$ be such that either
\begin{equation}	\label{l:b.strap:c}
\begin{aligned}
	&\alpha \le q < \infty, \quad
	\max\left\{- \frac{d}{q}, \, \frac{1}{\alpha}\left( \gamma- \frac{d}{q} \right)\right\} < s 
	<\frac{d+\gamma}{\alpha} - \frac{d}{q}, \\
	&\max\left\{0, \, -\frac{s}{d}, \, \frac{\gamma-\alpha s}{d} \right\}
		<\frac1{p} \le \frac{1}{q}, \quad
	 -\frac{d}{p} < s' \le \min\{s, \, \alpha s -\gamma\}. 
\end{aligned}
\end{equation}
or 
\begin{equation}	\label{l:b.strap:p=inf}
\begin{aligned}
	&\alpha < q \le \infty, \quad
	\max\left\{0,\, \frac{\gamma}{\alpha}\right\} \le s 
	<\frac{d+\gamma}{\alpha} - \frac{d}{q},\\
	&p= \infty, \quad 0\le s' \le \min\{s, \, \alpha s -\gamma\}.
\end{aligned}
\end{equation}
Let $u$ be the $L^q_{s_c}(\R^d)$-mild solution of \eqref{HH} with initial data $u_0 \in \mathcal{S}'(\R^d)$ on $[0,T_m)$
such that 
\begin{equation}\nonumber
	\sup_{t\in [0,T_m)} t^{\frac{s_c(q)-s}{2} } \|u(t)\|_{L^q_{s}} < \infty.
\end{equation}
Then it follows that 
\begin{equation}\nonumber
	\sup_{t\in [0,T_m)} t^{\frac{s_c(p)-s'}{2} } \|u(t)\|_{L^p_{s'}} < \infty.
\end{equation}
\end{lem}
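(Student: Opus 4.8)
The plan is to run the Duhamel formula restarted at the midpoint $t_0=t/2$. Since $u$ satisfies \eqref{integral-eq}, the semigroup property gives, for every $t\in(0,T_m)$,
\[
	u(t)=e^{(t/2)\Delta}u(t/2)+\int_{t/2}^{t}e^{(t-\tau)\Delta}\bigl\{|\cdot|^{\gamma}F(u(\tau))\bigr\}\,d\tau,
	\qquad F(u)=|u|^{\alpha-1}u.
\]
Writing $M:=\sup_{\tau\in[0,T_m)}\tau^{(s_c(q)-s)/2}\|u(\tau)\|_{L^q_s}<\infty$ for the quantity controlled by hypothesis, I would estimate the two terms separately in $L^p_{s'}$ and check that each is bounded by $Ct^{-(s_c(p)-s')/2}$, which is exactly the asserted conclusion. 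Restarting at $t/2$ rather than $0$ is the decisive device: it confines the time integral to $\tau\in[t/2,t]$, where $\|u(\tau)\|_{L^q_s}\le M\tau^{-(s_c(q)-s)/2}\le CMt^{-(s_c(q)-s)/2}$ carries no singularity as $\tau\to0$, so that only the behaviour of the heat kernel near $\tau=t$ will enter.

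For the linear term I would apply Lemma \ref{l:wLpLq} to the pair $(L^q_s,L^p_{s'})$: the inclusions $\tfrac1p\le\tfrac1q$, $-\tfrac dp<s'\le s$ and $s<d(1-\tfrac1q)$, which are built into \eqref{l:b.strap:c} (resp.\ \eqref{l:b.strap:p=inf} in the endpoint $p=\infty$, where the restriction $s'\ge0$ is forced), make it admissible, and inserting the bound on $\|u(t/2)\|_{L^q_s}$ produces the power $-\tfrac d2(\tfrac1q-\tfrac1p)-\tfrac{s-s'}2-\tfrac{s_c(q)-s}2=-\tfrac{s_c(p)-s'}2$ once $s_c(p)-s_c(q)=d(\tfrac1q-\tfrac1p)$ is used. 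For the nonlinear term I would combine the identity $\||\cdot|^{\gamma}F(u(\tau))\|_{L^{q/\alpha}_{\alpha s-\gamma}}=\|u(\tau)\|_{L^q_s}^{\alpha}$ with Lemma \ref{l:wLpLq} for the pair $(L^{q/\alpha}_{\alpha s-\gamma},L^p_{s'})$, whose admissibility conditions $\tfrac1p\le\tfrac1q\le\tfrac\alpha q$, $-\tfrac dp<s'\le\alpha s-\gamma$ and $\alpha s-\gamma<d(1-\tfrac\alpha q)$ are again precisely what \eqref{l:b.strap:c} encodes. Pulling the now bounded factor $\|u(\tau)\|_{L^q_s}^{\alpha}\le CM^{\alpha}t^{-\alpha(s_c(q)-s)/2}$ out of the integral leaves $\int_{t/2}^{t}(t-\tau)^{-a}\,d\tau$ with $a=\tfrac d2(\tfrac\alpha q-\tfrac1p)+\tfrac{\alpha s-\gamma-s'}2$, and a short computation shows the total power of $t$ is once more $-\tfrac{s_c(p)-s'}2$, matching the linear term.

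The one genuine obstacle is the convergence of this last integral, namely the requirement $a<1$. From the constraints in \eqref{l:b.strap:c} one only extracts the crude bound $a<d/2$, so $a<1$ is automatic when $d\le2$ but may fail when $d\ge3$ and the pairs $(q,s)$, $(p,s')$ are far apart — the point being that a single application of the nonlinear map lowers the weight from the source level $\alpha s-\gamma$ down to $s'$, a drop that can exceed the admissible size $\approx 2$. I would overcome this by not passing from $(q,s)$ to $(p,s')$ in one step, but interposing finitely many admissible pairs $(q,s)=(q_0,s_0),\dots,(q_n,s_n)=(p,s')$ inside the region cut out by \eqref{l:b.strap:c}, and applying the one-step estimate above along the chain, each time with step-exponent $a_j<1$; each step being a genuine Kato-type bound, its output feeds the next. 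Because each application decreases the source weight, the next admissible decrement grows, so a finite chain always reaches the target and the bound $M$ is propagated to $(p,s')$, yielding $\sup_{t\in[0,T_m)}t^{(s_c(p)-s')/2}\|u(t)\|_{L^p_{s'}}<\infty$. The only nontrivial bookkeeping in a full write-up is verifying that such a chain exists and remains admissible throughout; everything else reduces to the two applications of Lemma \ref{l:wLpLq} described above.
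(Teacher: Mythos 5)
Your core argument coincides with the paper's proof of Lemma \ref{l:b.strap}: the same restart of the Duhamel formula at $t/2$, the same application of Lemma \ref{l:wLpLq} to the pair $(L^q_s,L^p_{s'})$ for the linear piece and to $(L^{q/\alpha}_{\alpha s-\gamma},L^p_{s'})$ for the nonlinear piece, the same exponent arithmetic producing $-\tfrac{s_c(p)-s'}{2}$, and the same integrability condition $a<1$ for the time integral, which is exactly the paper's \eqref{l:b.strap:pr2}. The two arguments part ways only in how that condition is discharged. The paper asserts that $s<\tfrac{d+\gamma}{\alpha}-\tfrac dq$ gives $\alpha(\tfrac dq+s)-2-\gamma-\tfrac dp<-\tfrac dp$, hence that $-\tfrac dp<s'$ suffices; but that hypothesis only yields $\alpha(\tfrac dq+s)-2-\gamma-\tfrac dp<d-2-\tfrac dp$, so the displayed justification is conclusive only for $d\le2$, and for $d\ge3$ the paper falls back on unwritten ``tedious but straightforward computations.'' Your observation that \eqref{l:b.strap:c} by itself gives only $a<d/2$ is therefore accurate and identifies the genuinely delicate step; your proposed remedy --- iterating through finitely many intermediate admissible pairs --- is the standard one in this literature.

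The gap in your write-up is that the existence of the chain is precisely the nontrivial content, not bookkeeping, and you have deferred it. Set $\mu:=\tfrac dr+\sigma$ for a generic pair $(r,\sigma)$; one step of your scheme requires $\mu_{j+1}>\alpha\mu_j-2-\gamma$ together with $\mu_{j+1}>0$, so admissible descent is governed by the affine map $\mu\mapsto\alpha\mu-2-\gamma$, whose fixed point is $\tfrac{2+\gamma}{\alpha-1}$. Below the fixed point each step can lower $\mu$ by a fixed positive amount (take $\mu_{j+1}=\max\{\mu_n,\alpha\mu_j-2-\gamma+\varepsilon\}$, so that $\mu_j-\mu_{j+1}\ge(\alpha-1)\bigl(\tfrac{2+\gamma}{\alpha-1}-\mu_0\bigr)-\varepsilon>0$), and finitely many steps reach any target $\mu_n>0$; above the fixed point the map is expanding and no decreasing admissible step exists at all. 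The hypothesis \eqref{l:b.strap:c} only guarantees $\mu_0=\tfrac dq+s<\tfrac{d+\gamma}{\alpha}$, which implies $\mu_0<\tfrac{2+\gamma}{\alpha-1}$ exactly when $\alpha\le1+\tfrac{2+\gamma}{d-2}$ (for $d\ge3$). So to close your argument you must either add the hypothesis $s<s_c(q)$ --- which is harmless, since it holds via \eqref{t:HH.LWP.c2} in every invocation of this lemma in the paper --- or otherwise justify $\mu_0<\tfrac{2+\gamma}{\alpha-1}$, and then exhibit the chain explicitly, checking that the remaining constraints in \eqref{l:b.strap:c} (monotonicity of $1/q_j$, $s_{j+1}\le\alpha s_j-\gamma$, etc.) can be met along it, e.g.\ by linear interpolation of the remaining parameters. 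As it stands, your proof stops one inequality short of completion, in essentially the same place where the paper's own proof is least explicit.
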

\begin{proof}
We use a similar argument as in \cite{SnoTayWei2001} (See also \cite{BenTayWei2017}). 
Let 
\[
	A := \sup_{t\in [0,T_m)} t^{\frac{s_c(q)-s}2}\|u(t)\|_{L^{q}_{s}}<\infty.
\]
Let $t\in (0,T_m)$. We use the integral representation 
\begin{equation}\nonumber
	u(t) = e^{\frac{t}2 \Delta} u(t/2) 
	+ \int_{\frac{t}{2}}^t e^{(t-\tau)\Delta} \left\{ |\cdot|^{\gamma} 
		F(u(\tau,\cdot)) \right\} d\tau. 
\end{equation}
It follows from Lemma \ref{l:wLpLq} with $q\equiv p,$ $p\equiv q,$ 
$s'\equiv s'$ and $s\equiv s,$ that 
\begin{align*}
	\|e^{\frac{t}2\Delta} u(t/2)\|_{L^{p}_{s'}} 
	\le C t^{-\frac{d}{2}(\frac1{q}-\frac1{p}) - \frac{s-s'}2} \|u(t/2)\|_{L^{q}_{s}} 
	\le C t^{-\frac{1}{2}(\frac{2+\gamma}{\alpha-1}-\frac{d}{p} - s')} A 
\end{align*}
if 
\begin{equation}\label{l:b.strap:pr0}
\begin{aligned}
	&1\le q \le p \le \infty
		\quad\text{and}\quad 
	-\frac{d}{p} < s' \le s < d\left(1-\frac1{q}\right) \quad\left(0\le s' \text{ if } p=\infty\right). \\
\end{aligned}
\end{equation}
On the other hand, 
\begin{equation}\nonumber
\begin{aligned}
\|&N(u)(t)\|_{L^{p}_{s'}} 
\le  C \int_{\frac{t}{2}}^t 
		(t-\tau)^{-\frac{d}2(\frac{\alpha}{q}-\frac1{p}) - \frac{\alpha s -\gamma-s'}2} 
		 \| u(\tau) \|_{L^q_s}^{\alpha} d\tau \\
&\le CA^{\alpha} \int_{\frac{t}{2}}^t (t-\tau)^{-\frac{d}2(\frac{\alpha}{q}-\frac1{p}) - \frac{\alpha s -s'-\gamma}2} 
	\tau^{-\frac{(s_c-s)\alpha}{2}} d\tau \\
&= CA^{\alpha}  t^{-\frac12(\frac{2+\gamma}{\alpha-1} - \frac{d}{p} - s')} 
 \int_{\frac{1}{2}}^1 (1-\tau)^{-\frac{d}2(\frac{\alpha}{q}-\frac1{p}) - \frac{\alpha s -s'-\gamma}2} 
	\tau^{-\frac{(s_c-s)\alpha}{2}} d\tau, 
\end{aligned}
\end{equation}
thanks to Lemma \ref{l:wLpLq} with $q\equiv p,$ $p\equiv \frac{q}{\alpha},$ 
$s'\equiv s'$ and $s\equiv \alpha s-\gamma,$ provided that 
\begin{equation}\label{l:b.strap:pr1}
\begin{aligned}
	&1\le \frac{q}{\alpha} \le p \le \infty
		\quad\text{and}\quad 
	-\frac{d}{p} < s' \le \alpha s-\gamma < d\left(1-\frac{\alpha}{q}\right)
	\quad\left(0\le s' \text{ if } p=\infty\right).\\
\end{aligned}
\end{equation}
i.e., $\alpha \le q \le \infty,$ $\frac1{p} \le \frac{\alpha}{q},$
$-\frac{d}{p} < s' \le \alpha s-\gamma$ ($0\le s'\le \alpha s-\gamma$ if $p=\infty$) and $s < \frac{d+\gamma}{\alpha} - \frac{d}{q}.$ 
The final integral is convergent if 
\begin{equation}\label{l:b.strap:pr2}
	1-\frac{d}2\left(\frac{\alpha}{q}-\frac1{p}\right) - \frac{\alpha s-s' -\gamma}2>0, 
		\quad\text{i.e.,}\quad
	\alpha\left( \frac{d}{q} + s\right) - 2 - \gamma - \frac{d}{p}<s'.
\end{equation}
Thus, we have 
\begin{equation}\nonumber
	\sup_{t\in [0,T_m)} t^{\frac{s_c(p)-s'}{2} } \|u(t)\|_{L^p_{s'}} 
	\le C (A + A^{\alpha})
\end{equation}
under \eqref{l:b.strap:pr0}, \eqref{l:b.strap:pr1} and \eqref{l:b.strap:pr2}. 
Since $s < \frac{d+\gamma}{\alpha} - \frac{d}{q}$ from \eqref{l:b.strap:pr1}, we have 
$\alpha( \frac{d}{q} + s) - 2 - \gamma - \frac{d}{p} < - \frac{d}{p}.$ Thus, 
the conditions for $s'$ are that in \eqref{l:b.strap:c}. 
By tedious but straightforward computations, 
we may easily see that under condition \eqref{t:HH.LWP.c0}, 
the necessary and sufficient conditions of 
\eqref{l:b.strap:pr0}, \eqref{l:b.strap:pr1} and \eqref{l:b.strap:pr2} are 
\eqref{l:b.strap:c} or \eqref{l:b.strap:p=inf}. Hence, the lemma is proved. 
\end{proof}

\section{Local well-posedness and self-similar solutions}

\subsection{Proof of Theorem \ref{t:HH.LWP}}
In order to prove Theorem \ref{t:HH.LWP}, we prepare the following lemma. 
\begin{lem}\label{l:exist.crt}
Let positive numbers $\rho>0$ and $M>0$ satisfy 
\begin{equation}\label{l:exist.crt.c0}
	\rho + C_0 M^\alpha \le M \quad\text{and}\quad
	2 C_1 M^{\alpha-1} <1,
\end{equation}
where $C_0$ and $C_1$ are as in Lemma \ref{l:Kato.est}. 
Under conditions \eqref{t:HH.LWP.c0}, \eqref{t:HH.LWP.c1} and \eqref{t:HH.LWP.c2}, 
let $T\in (0,\infty]$ and $u_0 \in \mathcal{S}'(\R^d)$ be 
such that $e^{t\Delta}u_0 \in \mathcal{K}^s(T).$ 
If $\|e^{t\Delta}u_0\|_{\mathcal{K}^s(T)}\le \rho,$ 
then a solution $u$ to \eqref{HH} exists 
such that $u -e^{t\Delta} u_0 \in 
L^\infty(0,T ; L^q_{s_c}(\R^d)) \cap C((0,T] ; L^q_{s_c}(\R^d))$ and 
$\|u\|_{\mathcal{K}^s(T)} \le M.$ 
Moreover, the solution satisfies the following properties:
\begin{enumerate}[$(i)$]
\item $u -e^{t\Delta} u_0 \in L^\infty(0,T ; L^q_{\sigma}(\R^d))$ for $\sigma$ such that 
\begin{equation}	\label{l:exist.crt.csig}
	s_c \le \sigma \le \alpha s -\gamma.
\end{equation}
\item 
$u -e^{t\Delta} u_0 \in C([0,T) ; L^q_{\sigma}(\R^d))$ 
and $\Lim_{t\to0} \|u(t) -e^{t\Delta} u_0\|_{L^q_{\sigma}} = 0$ for $\sigma$ such that 
\eqref{l:exist.crt.csig} and $\sigma>s_c.$ 
\item $\displaystyle \lim_{t\to0} u(t) = u_0$ in the sense of distributions. 
\item Let $\gamma\ge0.$ Then the solution $u$ satisfies 
\[
\sup_{0<t<T} 
	t^{\frac{s_c(p_{\theta} ) - s_{\theta}}{2}} \|u(t)\|_{L^{p_{\theta}}_{s_{\theta}}} < \infty
\]
where 
\begin{equation}\nonumber	
	p_{\theta} = \frac{q}{\theta}, \quad 
	\theta s \le s_{\theta} \le s \quad\text{and}\quad 0\le \theta \le 1.
\end{equation}
In particular, if $\gamma\ge0,$ $u(t) \in L^\infty(\R^d)$ for $t>0.$
\end{enumerate}
\end{lem}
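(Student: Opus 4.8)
The plan is to realize $u$ as the fixed point of the map $\Phi$ defined in \eqref{map} on the closed ball $X_M:=\{v\in\mathcal{K}^s(T):\|v\|_{\mathcal{K}^s(T)}\le M\}$, a complete metric space for the distance induced by $\|\cdot\|_{\mathcal{K}^s(T)}$. First I would check that $\Phi$ maps $X_M$ into itself: by the triangle inequality, the hypothesis $\|e^{t\Delta}u_0\|_{\mathcal{K}^s(T)}\le\rho$, and the stability estimate \eqref{l:Kato.est1} of Lemma \ref{l:Kato.est}, one has
\[
	\|\Phi(v)\|_{\mathcal{K}^s(T)}\le\rho+C_0\|v\|_{\mathcal{K}^s(T)}^{\alpha}\le\rho+C_0M^{\alpha}\le M
\]
for $v\in X_M$, the last step being the first inequality in \eqref{l:exist.crt.c0}. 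Contraction comes from \eqref{l:Kato.est2} with $\theta=1$, which on $X_M$ gives a Lipschitz constant at most $2C_1M^{\alpha-1}<1$ by the second condition in \eqref{l:exist.crt.c0}. Banach's fixed point theorem then yields a unique $u\in X_M$ with $\Phi(u)=u$, i.e. an $L^q_{s_c}(\R^d)$-mild solution with $\|u\|_{\mathcal{K}^s(T)}\le M$; and since $u-e^{t\Delta}u_0=N(u)$, Lemma \ref{l:crt.est} gives $N(u)\in L^\infty(0,T;L^q_{s_c}(\R^d))$.

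For the refined integrability in $(i)$ I would repeat the estimate behind Lemma \ref{l:crt.est} with the target weight $\sigma$ in place of $s_c$. Using $\||\cdot|^\gamma F(u(\tau))\|_{L^{q/\alpha}_{\alpha s-\gamma}}=\|u(\tau)\|_{L^q_s}^{\alpha}$ and Lemma \ref{l:wLpLq} with source $L^{q/\alpha}_{\alpha s-\gamma}$ and target $L^q_\sigma$ (admissible exactly when $\sigma\le\alpha s-\gamma$), followed by $\|u(\tau)\|_{L^q_s}^{\alpha}\le\tau^{-\frac{(s_c-s)\alpha}{2}}\|u\|_{\mathcal{K}^s(T)}^{\alpha}$, the Duhamel integral reduces to a beta integral equal to a constant times $t^{\frac{\sigma-s_c}{2}}\|u\|_{\mathcal{K}^s(T)}^{\alpha}$; the beta function converges under the same conditions as in Lemma \ref{l:crt.est}. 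Since $\sigma\ge s_c$ the power $\frac{\sigma-s_c}{2}$ is nonnegative, giving $N(u)\in L^\infty(0,T';L^q_\sigma(\R^d))$ for every finite $T'$, and a genuinely uniform bound in the borderline case $\sigma=s_c$.

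Properties $(ii)$ and $(iii)$ use the same bound. For $\sigma>s_c$ one has $\|N(u)(t)\|_{L^q_\sigma}\le Ct^{\frac{\sigma-s_c}{2}}\|u\|_{\mathcal{K}^s(T)}^{\alpha}\to0$ as $t\to0^{+}$, which yields $N(u)(0)=0$ and continuity at $t=0$ into $L^q_\sigma(\R^d)$; continuity on $(0,T]$ is then obtained by the standard splitting $N(u)(t_2)-N(u)(t_1)=\int_{t_1}^{t_2}e^{(t_2-\tau)\Delta}\{\cdots\}\,d\tau+\int_0^{t_1}(e^{(t_2-\tau)\Delta}-e^{(t_1-\tau)\Delta})\{\cdots\}\,d\tau$, controlling the first integral over the short interval as above and invoking the strong continuity of $\{e^{t\Delta}\}$ together with dominated convergence for the second. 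For $(iii)$, as long as $\alpha s-\gamma>s_c$ one may fix $\sigma\in(s_c,\alpha s-\gamma]$ and conclude $N(u)(t)\to0$ in $L^q_\sigma(\R^d)\hookrightarrow\mathcal{S}'(\R^d)$, while $e^{t\Delta}u_0\to u_0$ in $\mathcal{S}'(\R^d)$, so that $u(t)\to u_0$ in $\mathcal{S}'(\R^d)$; the borderline case $\alpha s-\gamma=s_c$ (when $s$ attains its lower bound in \eqref{t:HH.LWP.c2}) is handled by pairing the Duhamel term directly against a Schwartz test function.

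For $(iv)$ with $\gamma\ge0$ I would invoke the bootstrap Lemma \ref{l:b.strap}: the fixed point satisfies $\sup_{0<t\le T}t^{\frac{s_c-s}{2}}\|u(t)\|_{L^q_s}\le M<\infty$, which is the hypothesis of that lemma, so its conclusion propagates the control to every pair $(p_\theta,s_\theta)=(q/\theta,s_\theta)$ with $\theta s\le s_\theta\le s$ and $0\le\theta\le1$, once these pairs are checked to meet \eqref{l:b.strap:c} (or \eqref{l:b.strap:p=inf} at $\theta=0$). The sign condition $\gamma\ge0$, together with $s\le\alpha s-\gamma$, is what makes those exponent constraints satisfiable up to $p_\theta=\infty$ and $s_\theta=0$, and the latter choice gives $\|u(t)\|_{L^\infty}\le Ct^{-\frac{s_c(\infty)}{2}}<\infty$ for $t>0$. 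I expect the real work to lie not in the fixed-point step, which is mechanical, but in the time-continuity claims of the first part and of $(ii)$—where the norm bounds must be upgraded to genuine continuity via the splitting argument, keeping each application of Lemma \ref{l:wLpLq} within its admissible exponent range—and in the routine but lengthy verification that the pairs $(p_\theta,s_\theta)$ stay inside the hypotheses of Lemma \ref{l:b.strap} throughout the asserted range.
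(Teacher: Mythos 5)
Your proposal is correct and follows essentially the same route as the paper: Banach's fixed point theorem on the ball $X_M$ via Lemmas \ref{l:Kato.est} and \ref{l:crt.est}, the $\sigma$-weighted Duhamel estimate (Lemma \ref{l:wLpLq} with source space $L^{q/\alpha}_{\alpha s-\gamma}$ yielding the factor $t^{\frac{\sigma-s_c}{2}}$) for assertions $(i)$--$(iii)$, and the bootstrap Lemma \ref{l:b.strap} for $(iv)$. The only cosmetic difference is in $(iv)$, where the paper applies Lemma \ref{l:b.strap} solely at the endpoint $p=\infty$ under condition \eqref{l:b.strap:p=inf} and then reaches the intermediate pairs $(p_\theta,s_\theta)$ by interpolation (Proposition \ref{p:wL.sp} $(3)$) between $L^q_s$ and $L^\infty_{s'}$, rather than re-running the bootstrap for each pair as you do.
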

\begin{rem}\label{r:exist}
To meet \eqref{l:exist.crt.c0}, it suffices to take $M=2\rho$ and 
\[
M < \min\left\{  (2C_0)^{-\frac{1}{\alpha-1}}, \, 
                        (2C_1)^{- \frac{1}{\alpha-1}} \right\}.
\]
\end{rem}
\begin{rem}
If $u_0\in L^q_{s_c}(\R^d)$, then $u_0$ satisfies the assumptions of 
Lemma \ref{l:exist.crt} with $T=\infty.$ Indeed, letting $q\equiv q,$ $p\equiv q,$ 
$s\equiv s_c$ and $s'\equiv s$ in Lemma \ref{l:wLpLq}, we obtain 
\begin{equation}\nonumber
	\|e^{t\Delta} u_0\|_{L^q_s} \le C t^{-\frac{s_c-s}2} \|u_0\|_{L^q_{s_c}}
\end{equation}
provided that $-\frac{d}{q} < s\le s_c < d(1-\frac1{q}),$ i.e., 
$\gamma>-2$ and $\alpha>\alpha_F(d,\gamma).$ 
Thus, $e^{t\Delta} u_0 \in \mathcal{K}^s.$ 
\end{rem}
\begin{proof}[Proof of Lemma \ref{l:exist.crt}]
Setting the metric $d(u,v) := \|u-v\|_{\mathcal{K}^s(T)}$, we may show that $(\mathcal{K}^s(T),d)$ is a nonempty complete metric space. Let
$X_M := \{ u \in\mathcal{K}^s(T) \,;\,  \|u\|_{\mathcal{K}^s(T)} \le M \}$ be the closed ball in $\mathcal{K}^s(T)$ centered at the origin with radius $M$.
We prove that the map defined in \eqref{map} has a fixed point in $X_M.$ 
Thanks to Lemma \ref{l:Kato.est} and \eqref{l:exist.crt.c0}, we have 
\begin{equation}\nonumber
\begin{aligned}
\|\Phi (u)\|_{\mathcal{K}^s(T)} 
\le \|e^{t\Delta} u_0 \|_{\mathcal{K}^s(T)} 
	+ C_0 \|u\|_{\mathcal{K}^s(T)}^\alpha 
\le \rho + C_0 M^\alpha \le M
\end{aligned}\end{equation} 
and 
\begin{equation}\label{t:HH.LWP.pr.Lip'}
\|\Phi (u)-\Phi (v)\|_{\mathcal{K}^s(T)} 
\le C_1 \left( \|u\|_{\mathcal{K}^s(T)}^{\alpha-1}
	+\|v\|_{\mathcal{K}^s(T)}^{\alpha-1} \right) 
	\|u-v\|_{\mathcal{K}^s(T)}
\le 2 C_1 M^{\alpha-1} \|u-v\|_{\mathcal{K}^s(T)}
\end{equation}
for any $u, v\in X_M,$ where $2 C_1 M^{\alpha-1}<1.$ 
These prove that $\Phi(u) \in X_M$ and that 
$\Phi$ is a contraction mapping in $X_M.$ 
Thus, Banach's fixed point theorem ensures the existence of 
a unique fixed point $u$ for the map $\Phi$ in $X_M,$ 
provided that $q$ and $s$ satisfy \eqref{l:Kato.est.c1} and \eqref{l:Kato.est.c2}. 
The fixed point $u$ also satisfies, by construction, the estimate 
$\|u\|_{\mathcal{K}^s(T)} \le M.$ 

Having obtained a fixed point in $\mathcal{K}^s(T)$ for some $T,$ 
we have $u -e^{t\Delta} u_0 \in L^\infty(0,T;L^q_{s_c}(\R^d))$ by Lemma 
\ref{l:crt.est}, provided further that \eqref{l:crt.est.c1} and \eqref{l:crt.est.c2} are satisfied. 
We see that $\frac1{q} < \frac{2}{d(\alpha-1)}$, 
$q>0,$ $\alpha>1$ and $\gamma>-2$ imply 
\[
	\max\left\{\frac{\gamma}{\alpha-1}, \, - \frac{d}{q} \right\} 
		< \frac{s_c+\gamma}{\alpha} = s_c - \frac{d(\alpha-1)}{\alpha} \left(\frac{2}{d(\alpha-1)} - \frac1{q} \right) 
\]
so $\frac{s_c+\gamma}{\alpha}$ is the stronger lower bound for $s.$ 
Thus, $s$ must satisfy \eqref{t:HH.LWP.c2}. 
Combining \eqref{l:Kato.est.c1} and \eqref{l:crt.est.c1}, we end up with 
\begin{equation}\label{t:HH.LWP:pr1}
	\frac1{q} < \min \left\{ \frac{2}{d(\alpha-1)}, \,
		\frac1{\alpha} \left(1 - \frac{\gamma}{d(\alpha-1)}\right), \, 
		\frac1{\alpha-1} \left(1-\frac{2+\gamma}{d(\alpha-1)} \right)\right\}, 
\end{equation}
which in fact amounts to \eqref{t:HH.LWP.c1}. 
	\smallbreak
We next prove the assertion $(i)$--$(iii).$ Fix a solution $u \in \mathcal{K}^s(T)$ 
with $q$ and $s$ as in \eqref{t:HH.LWP.c1} and \eqref{t:HH.LWP.c2}. We have 
\begin{equation}\label{t:HH.LWP:pr2}
\begin{aligned}
\|N(u)(t)\|_{L^q_{\sigma}} 
&\le C \int_0^t (t-\tau)^{-\frac{d(\alpha-1)}{2q} - \frac12 (\alpha s -\gamma- \sigma)} 
	 \| u(\tau) \|_{L^q_s}^{\alpha} d\tau \\
&\le C \int_0^t (t-\tau)^{-\frac{d(\alpha-1)}{2q} - \frac12 (\alpha s -\gamma- \sigma)} 
	\tau^{-\frac{(s_c-s)\alpha}{2}} d\tau \times  \|u \|_{\mathcal{K}^s(T)}^{\alpha} \\
&= C  t^{\frac{\sigma-s_c}2} B\left(\frac{(\alpha-1)(s_c-s) + \sigma-s}2, \, 1 - \frac{(s_c-s)\alpha}2 \right)  
	\times  \|u \|_{\mathcal{K}^s(T)}^{\alpha}, 
\end{aligned}
\end{equation}
thanks to Lemma \ref{l:wLpLq} with $q\equiv q,$ $p\equiv \frac{q}{\alpha},$ 
$s' \equiv \sigma$ and $s\equiv \alpha s-\gamma,$ 
provided that $1\le \frac{q}{\alpha} \le q \le \infty$ and 
$-\frac{d}{q} < \sigma \le \alpha s -\gamma < d(1-\frac{\alpha}{q}).$ 
The power of $t$ in the final line is non-negative if $\sigma \ge s_c.$ 
The use of Lemma \ref{l:wLpLq} along with the convergence of the beta function require, 
in addition to \eqref{t:HH.LWP.c1} and \eqref{t:HH.LWP.c2}, that $\sigma$ satisfies
\eqref{l:exist.crt.csig}. For such a $\sigma$ to exist, one needs 
$\frac{s_c+\gamma}{\alpha}\le s,$ which is assured by \eqref{t:HH.LWP.c2}. 
If $\sigma>s_c,$ (i.e., if $\frac{s_c+\gamma}{\alpha}< s,$) then the power of $t$ is positive, thus the right-hand side of 
\eqref{t:HH.LWP:pr2} goes to zero as $t\to0.$ Hence, the assertions $(ii)$ 
and $(iii)$ are proved. 
	\smallbreak
Finally, we prove the assertion $(iv).$ Fix a solution $u \in \mathcal{K}^s(T)$ with 
$q$ and $s$ as in \eqref{t:HH.LWP.c1} and \eqref{t:HH.LWP.c2}. 
Here, we notice that under $\gamma>0,$ the lower bound of \eqref{t:HH.LWP.c2} 
always satisfies 
\begin{equation}\nonumber
	\max\left\{0, \, \frac{\gamma}{\alpha}\right\} 
	\le s_c - \frac{d(\alpha-1)}{\alpha} \left(\frac{2}{d(\alpha-1)} - \frac1{q} \right),
\end{equation}
which implies that the condition 
\eqref{l:b.strap:p=inf} of Lemma \ref{l:b.strap} is always satisfied as well. Thus, 
Lemma \ref{l:b.strap} immediately implies 
\begin{equation}\nonumber
	\sup_{t\in [0,T)} t^{\frac{s_c(\infty)-s'}{2} } \|u(t)\|_{L^{\infty}_{s'}} < \infty
\end{equation}
for 
\begin{equation}\nonumber
	0\le s' \le \min\{s, \, \alpha s -\gamma\}.
\end{equation}
We also have $u\in \mathcal{K}^s(T)$ by assumption. Thus, the conclusion follows from 
Proposition \ref{p:wL.sp} $(3).$ 
\end{proof}
We start by proving the uniqueness of our solution. 
	\subsubsection{Proof of $(ii)$}
Let $T>0$ be given and fixed. We prove the uniqueness in $\mathcal{K}^s(T).$ Under conditions \eqref{t:HH.LWP.c0}, 
\eqref{t:HH.LWP.c1} and \eqref{t:HH.LWP.c2}, 
let $u$ and $v$ be two solutions to \eqref{integral-eq} 
belonging to $C([0,T] ; L^q_{s_c}(\R^d)) \cap \mathcal{K}^s(T)$ 
with the same initial data $u_0 \in L^q_{s_c}(\R^d)$ 
($u_0 \in \mathcal{L}^{\infty}_{s_c}(\R^d)$ if $q=\infty$)
\footnote{We assume $u_0 \in \mathcal{L}^{\infty}_{s_c}(\R^d)$ if $q=\infty$ in 
order to utilize the density, which is needed in the proof of \eqref{id.lmt.K} and \eqref{id.lmt.crt}.}
such that 
\begin{equation}\nonumber
	\|u\|_{\mathcal{K}^s(T)}+\|v\|_{\mathcal{K}^s(T)} \le K,
\end{equation}
for some positive constant $K.$ 
Let us recall that we have the following two limits at our disposal:
\begin{equation}\label{id.lmt.K}
	\lim_{T\to0} \|e^{t\Delta} u_0\|_{\mathcal{K}^s(T)} = 0
\end{equation}
and 
\begin{equation}\label{id.lmt.crt}
	\lim_{T\to0} \|u - e^{t\Delta} u_0\|_{L^\infty(0,T;L^q_{s_c})} = 0. 
\end{equation}
The former is the well-known fact stemming from the density of 
$C_0^\infty(\R^d)$ in $L^q_{s_c}(\R^d)$ (See Proposition \ref{p:wL.sp} in Appendix). 
The latter is shown by the triangle inequality and the continuity at $t=0$ of solutions 
for both the linear and nonlinear problems. 
Let $w :=u-v.$ By \eqref{diff.pt.est}, we have 
\begin{equation}\nonumber
	|F(u)-F(v)| 
	\le C |e^{t\Delta} u_0|^{\alpha-1} |u-v| 
	+ C (|u-e^{t\Delta} u_0|^{\alpha-1} + |v-e^{t\Delta} u_0|^{\alpha-1})|u-v|,
\end{equation}
which implies that $|w| \le C( I_1 + I_2 + I_3)$ (thanks to the maximum principle), where  
\begin{equation}\nonumber
\begin{aligned}
&	I_1 := \int_0^{t} e^{(t-\tau)\Delta} 
		\left\{|\cdot|^{\gamma} |e^{t\Delta} u_0|^{\alpha-1} |w| \right\} \, d\tau, \\
&	I_2 :=  \int_0^{t} e^{(t-\tau)\Delta} 
		\left\{|\cdot|^{\gamma} |u-e^{t\Delta} u_0|^{\alpha-1} |w| \right\} \, d\tau \\
		\text{and}\quad
&	I_3 :=  \int_0^{t} e^{(t-\tau)\Delta} 
		\left\{|\cdot|^{\gamma} |v-e^{t\Delta} u_0|^{\alpha-1} |w| \right\} \, d\tau. 
\end{aligned}
\end{equation}
Given $q$ and $s$ satisfying \eqref{t:HH.LWP.c1} and \eqref{t:HH.LWP.c2}, 
we may always choose $\theta$ so that \eqref{l:Kato.est.c1'} and \eqref{l:Kato.est.c2'} 
are satisfied. Indeed, \eqref{l:Kato.est.c1'} and \eqref{l:Kato.est.c2'} become 
\eqref{l:Kato.est.c1} and \eqref{l:Kato.est.c2} as $\theta\to1,$ respectively, 
which are weaker than the assumptions on $q$ and $s$ in Theorem \ref{t:HH.LWP}. 
The only condition that has to be considered independently is 
$s_c - \frac{d}{\theta} \left( \frac{2}{d(\alpha-1)} -\frac{1}{q} \right)  \le s$ in 
\eqref{l:Kato.est.c2'} (as this is not a strict inequality), 
but this causes no problem since 
$s_c - \frac{d}{\theta} \left( \frac{2}{d(\alpha-1)} -\frac{1}{q} \right) 
\le s_c - \frac{d(\alpha-1)}{\alpha} \left(\frac{2}{d(\alpha-1)} - \frac1{q} \right)$
holds for any $\theta \in (0,1].$ 
Thus, we may use estimate \eqref{l:Kato.est2} freely for our $q$ and $s.$ 

By the same calculation leading to \eqref{l:Kato.est1}, we deduce that 
\begin{equation}\label{pr.uni.1}
	\|I_1\|_{\mathcal{K}^s(T)} 
	\le C \|e^{t\Delta} u_0\|_{\mathcal{K}^s(T)}^{\alpha-1} \|w\|_{\mathcal{K}^s(T)}. 
\end{equation}
For $I_2,$ estimate \eqref{l:Kato.est2} implies 
\begin{equation}\label{pr.uni.2}
\begin{aligned}
\|I_2\|_{\mathcal{K}^s(T)} 
&\le C \|u-e^{t\Delta} u_0\|_{\mathcal{K}^s(T)}^{\theta(\alpha-1)} 
		\|u-e^{t\Delta} u_0\|_{L^\infty(0,T; L^q_{s_c}) }^{(1-\theta)(\alpha-1)} 
	 \|w\|_{\mathcal{K}^s(T)}  \\
&\le C K^{\theta(\alpha-1)} 
	\|u-e^{t\Delta} u_0\|_{L^\infty(0,T; L^q_{s_c}) }^{(1-\theta)(\alpha-1)} 
	\|w\|_{\mathcal{K}^s(T)}. 
\end{aligned}
\end{equation}
Similarly, we have 
\begin{equation}\label{pr.uni.3}
\begin{aligned}
\|I_3\|_{\mathcal{K}^s(T)} 
\le C K^{\theta(\alpha-1)} 
	\, \|v-e^{t\Delta} u_0\|_{L^\infty(0,T; L^q_{s_c}) }^{(1-\theta)(\alpha-1)} 
	\, \|w\|_{\mathcal{K}^s(T)}. 
\end{aligned}
\end{equation}
Gathering \eqref{pr.uni.1}, \eqref{pr.uni.2} and \eqref{pr.uni.3}, we deduce that 
there exists some positive constant $C$ 
independent of $T,$ $u_0,$ $u$ and $v$ such that 
\begin{equation}\nonumber
\|w\|_{\mathcal{K}^s(T)} 
\le C \mathcal{N}(T, u_0, u, v)
	\|w\|_{\mathcal{K}^s(T)} 
\end{equation}
where
\begin{equation}\nonumber
\mathcal{N}(T, u_0, u, v) := 
 \|e^{t\Delta} u_0\|_{\mathcal{K}^s(T)}^{\alpha-1} 
	+\|u-e^{t\Delta} u_0\|_{L^\infty(0,T; L^q_{s_c}) }^{(1-\theta)(\alpha-1)} 
	+ \|v-e^{t\Delta} u_0\|_{L^\infty(0,T; L^q_{s_c}) }^{(1-\theta)(\alpha-1)}.  
\end{equation}
Since $0<\theta<1,$ the above quantity goes to zero as 
$T$ tends to zero, thanks to \eqref{id.lmt.crt} and \eqref{id.lmt.K}. 
Thus, there exists some $T'$ such that 
\begin{equation}\nonumber
\|w\|_{\mathcal{K}^s(T')} 
	\le \frac1{2} \|w\|_{\mathcal{K}^s(T')}
\end{equation}
for instance, which implies the uniqueness on the interval $[0,T'].$ Set 
\begin{equation}\nonumber
T^* = \sup \{t\in [0,T] \,; \, u(\tau) = v(\tau) , \ 0\le \tau \le t\}.
\end{equation}
The preceding argument shows that $T^*>0.$
Now assume by contradiction that $T^*<T.$ By continuity of $u$ and $v,$ 
we have $u(T^*) = v(T^*).$ 
Setting $u^*(t) = u(t+T^*)$ and $v^*(t) = v(t+T^*),$ we may express the solutions as 
\begin{align*}\nonumber
&	u^*(t) = e^{t\Delta} u(T^*) + \int_0^t e^{(t-\tau)\Delta} 
	\left\{ |\cdot|^{\gamma} F(u(T^*+\tau,\cdot)) \right\} d\tau \\
		\text{and} \quad
&	v^*(t) = e^{t\Delta} u(T^*) + \int_0^t e^{(t-\tau)\Delta} 
	\left\{ |\cdot|^{\gamma} F(v(T^*+\tau,\cdot)) \right\} d\tau,  
\end{align*}
where $0\le t < T - T^*.$ 
By a similar calculation as above, we may show that 
\begin{equation}\nonumber
\|u^* - v^*\|_{\mathcal{K}^s(T)} 
	\le \mathcal{N}(T, u(T^*), u, v)  
		\|u^* - v^*\|_{\mathcal{K}^s(T)}, 
\end{equation}
which implies again that there exist some $T'$ such that $u^*(t) = v^*(t)$ 
for $t\in [0,T'],$ i.e., $u(t) = v(t)$ for $t \in (T^*, T^* +T'),$ a contradiction. 
Thus, $u(t)=v(t)$ on the whole interval $[0,T].$ 
This completes the proof of Theorem \ref{t:HH.LWP} $(ii).$  

	\subsubsection{Proof of $(i)$} 
Let $u_0 \in L^q_{s_c}(\R^d)$ 
($u_0 \in \mathcal{L}^{\infty}_{s_c}(\R^d)$ if $q=\infty$). 
We recall that $C_0^\infty(\R^d)$ is dense in the space $L^q_{s_c}(\R^d)$ 
by Proposition \ref{p:wL.sp}, which ensures the property \eqref{id.lmt.K}. 
Thus, there exists some real number $T$ that is 
small enough so that $\|e^{t\Delta} u_0\|_{\mathcal{K}^s(T)}\le \rho.$ 
Now Lemma \ref{l:exist.crt} asserts that 
\begin{equation}\nonumber
\|u\|_{L^\infty(0,T \,;\, L^q_{s_c})} 
	\le \|e^{t\Delta} u_0\|_{L^\infty(0,T \,;\, L^q_{s_c})} + C_2 \|u \|_{\mathcal{K}^s(T)}^{\alpha}
	\le \|u_0\|_{L^q_{s_c}} + C_2 M^{\alpha}. 
\end{equation}
The time-continuity at $t=0$ follows from a well-known argument 
(see \cites{OkaTsu2016, Tsu2011} for example). 
Thus, $u$ is an $L^q_{s_c}(\R^d)$-mild solution to \eqref{HH} on $[0,T]$ 
such that $\|u\|_{\mathcal{K}^s(T)}\le M.$ 
To deduce the estimate \eqref{t:HH.LWP.est}, 
it suffices to take $\rho=\|e^{t\Delta} u_0\|_{\mathcal{K}^s(T)}$ 
and $M$ as in Remark \ref{r:exist}. 
Given $u_0 \in L^q_{s_c}(\R^d),$ let the maximal existence time 
$T_m = T_m (u_0)$ be defined by \eqref{d:Tm} with $\tilde s = s_c.$ 
By a standard argument, uniqueness ensures that 
the solution can be extended to the maximal interval $[0,T_m).$ 

	\subsubsection{Proof of $(iii)$}
Given two initial data $u_0, v_0 \in L^q_{s_c}(\R^d),$ 
we next show the Lipschitz continuity of the flow map. 
Let $u$ and $v$ be two solutions associated with the initial data $u_0$ and $v_0,$ 
respectively, constructed in (i) with the estimate 
$\|u\|_{\mathcal{K}^s(T)}\le 2\|e^{t\Delta} u_0\|_{\mathcal{K}^s(T)}.$ 
Let $w := u-v$ and $w_0 := u_0-v_0.$ 
We carry out the same calculations as before to 
see that there exists a positive constant $C_3$ such that 
\begin{align*}
\|w\|_{L^\infty(0,T; L^q_{s_c})\cap \mathcal{K}^s(T)} 
&\le \|e^{t\Delta}w_0\|_{L^\infty(0,T; L^q_{s_c})\cap \mathcal{K}^s(T)} 
	+ C_3 \left( \|u \|_{\mathcal{K}^s(T)}^{\alpha-1}
		+\|v \|_{\mathcal{K}^s(T)}^{\alpha-1}\right) \|w\|_{\mathcal{K}^s(T)} \\
&\le \|w_0\|_{L^q_{s_c}} 
	+ 2 C_3 M^{\alpha-1} \|w\|_{\mathcal{K}^s(T)}, 
\end{align*}
where $M= \max \{ \|e^{t\Delta} u_0 \|_{\mathcal{K}^s(T)}, 
	\, \|e^{t\Delta} v_0 \|_{\mathcal{K}^s(T)}\}.$ 
By taking $T$ smaller if necessary 
($2 C_3 M^{\alpha-1}\le \frac12$ for instance), we deduce 
the Lipschitz stability on the short time-interval $[0,T].$ 

	\subsubsection{Proof of $(iv)$}
We prove the blow-up criterion by a contradiction argument. 
Let $T_m<\infty$ and suppose that $\|u\|_{\mathcal{K}^s(T_m)} <\infty$ holds.  
Let $u$ be a maximal solution and let $t_0 \in (0,T_m),$ to be fixed later.  
We aim to prove 
there exists an $\ep>0$ such that 
\begin{equation}\label{buc-aim}
	\|e^{t\Delta} u(t_0)\|_{\mathcal{K}^s(T_m-t_0+\ep)} \le \rho,
\end{equation}
where $\rho > 0$ is the constant as in Lemma \ref{l:exist.crt}. 
Once \eqref{buc-aim} is proved, 
the solution $u$ can be smoothly extended to $T_m+\ep.$ 
Moreover, $u$ is unique in 
$C([0,T_m+\ep] ; L^q_{s_c}(\R^d)) \cap \mathcal{K}^s(T_m+\ep)$ by $(ii),$ 
which contradicts the definition of $T_m.$ 
Thus, $\|u\|_{\mathcal{K}^s(T_m)}=\infty$ if $T_m<\infty.$ 

Let us concentrate on proving \eqref{buc-aim}. We may express the maximal solution as follows: 
\begin{equation}\nonumber
	u(t+t_0) = e^{t\Delta} u(t_0) + \int_0^t e^{(t-\tau)\Delta} \left\{ |\cdot|^{\gamma} F(u(t_0+\tau)) \right\} d\tau, \quad 0\le t < T_m - t_0.
\end{equation}
Thus, we have 
\begin{align*}\nonumber
	\|e^{t\Delta}u(t_0)&\|_{\mathcal{K}^s(T_m-t_0)}  \\
	&\le \|u(\cdot+t_0)\|_{\mathcal{K}^s(T_m-t_0)} 
	+ \left\| \int_0^t e^{(t-\tau)\Delta} \left\{ |\cdot|^{\gamma} F(u(t_0+\tau)) \right\} d\tau \right\|_{\mathcal{K}^s(T_m-t_0)}. 
\end{align*}
For the first term, we have 
\begin{equation}\label{pr.iv.1}
\begin{aligned}
	\|u(\cdot+t_0)&\|_{\mathcal{K}^s(T_m-t_0)} 
	= \sup_{0\le t \le T_m - t_0} t^{\frac{s_c-s}2} \|u(t + t_0)\|_{L^q}
	= \sup_{t_0\le s\le T_m} (s-t_0)^{\frac{s_c-s}2} \|u(s)\|_{L^q} \\
	&\le \left(\frac{T_m-t_0}{t_0} \right)^{\frac{s_c-s}2} 
		\sup_{t_0 \le s\le T_m} s^{\frac{s_c-s}2} \|u(s)\|_{L^q} 
	\le \left(\frac{T_m-t_0}{t_0} \right)^{\frac{s_c-s}2} \|u\|_{\mathcal{K}^s(T_m)}.
\end{aligned}
\end{equation}
For the second term, Lemma \ref{l:Kato.est} yields 
\begin{equation}\label{pr.iv.2}
	\left\| \int_0^t e^{(t-\tau)\Delta} \left\{ |\cdot|^{\gamma} 
				F(u(t_0+\tau)) \right\} d\tau \right\|_{\mathcal{K}^s(T_m-t_0)}
	\le C_0 \|u(\cdot+t_0)\|_{\mathcal{K}^s(T_m-t_0)}^\alpha.
\end{equation}
Since the right-hand sides in \eqref{pr.iv.1} and \eqref{pr.iv.2} go to $0$ as $t_0 \to T_m$,
we may fix some $t_0$ 
close enough to $T_m$ so that 
\begin{equation}\nonumber
	\|e^{t\Delta} u(t_0)\|_{\mathcal{K}^s(T_m-t_0)} 
	\le 2^{-\frac{s_c-s}2}  \frac{\rho}{2}.
\end{equation}
Let $\varepsilon\in (0, T_m -t_0)$, to be fixed later. 
Then, we have 
\begin{equation}\label{pr.iv.4}
\begin{aligned}
\sup_{2\ep\le t\le T_m - t_0 +\ep} t^{\frac{s_c-s}2} \|e^{t\Delta} u(t_0)\|_{L^q_s} 
	&=  \sup_{\ep \le s\le T_m - t_0} \left( \frac{s+\ep}{s}\right)^{\frac{s_c-s}2} 
		 s^{\frac{s_c-s}2} \|e^{(s+\ep)\Delta} u(t_0)\|_{L^q_s} \\
	&\le \sup_{\ep \le s\le T_m - t_0} \left( \frac{s+\ep}{s}\right)^{\frac{s_c-s}2} 
		\|e^{t\Delta} u(t_0)\|_{\mathcal{K}^s(T_m-t_0)}\\
	&\le 2^{\frac{s_c-s}2} 
		\|e^{t\Delta} u(t_0)\|_{\mathcal{K}^s(T_m-t_0)} \le  \frac{\rho}{2},
 \end{aligned}
 \end{equation}
where we have used 
$$\displaystyle{\sup_{\ep \le s\le T_m - t_0} \frac{s+\ep}{s} \le 2.}$$
On the other hand, since $u(t_0) \in L^q_{s_c}(\R^d),$ we may fix some $\ep>0$ 
such that 
\begin{equation}\label{pr.iv.0}
	\|e^{t\Delta}u(t_0)\|_{\mathcal{K}^s(2\ep)} \le \frac{\rho}2, 
\end{equation}
By \eqref{pr.iv.0} and \eqref{pr.iv.4}, we deduce that 
\[
\begin{split}
	\|e^{t\Delta} u(t_0)\|_{\mathcal{K}^s(T_m-t_0+\ep)} 
	& \le \|e^{t\Delta}u(t_0)\|_{\mathcal{K}^s(2\ep)} + \sup_{2\ep\le t\le T_m - t_0 +\ep} t^{\frac{s_c-s}2} \|e^{t\Delta} u(t_0)\|_{L^q_s} \\
	& \le \frac{\rho}2 + \frac{\rho}2 = \rho,
\end{split}
\]
which proves \eqref{buc-aim}. 

	\subsubsection{Proof of $(v)$}
Taking $T=\infty$ in Lemma \ref{l:exist.crt}, we deduce the global existence. 
Lastly, we show that if $T_m=\infty,$ then the solution is dissipative. 
We sketch the proof, as most of the computations are similar to the previous ones. 
We take $\{u_{0n}\}_{n\ge0} \subset C_0^\infty(\R^d)$ such that 
$u_{0n} \to u_0$ in $L^q_{s_c}(\R^d)$ and decompose the integral equation into 
\begin{equation}\nonumber\begin{aligned}
u(t) =  e^{t\Delta} u_{0n} + e^{t\Delta} (u_0-u_{0n}) 
&+ e^{(t-t')\Delta} \int_{0}^{t'} e^{(t'-\tau)\Delta} \left( |\cdot|^{\gamma} F(u(\tau)) \right)d\tau \\
&+ \int_{t'}^{t} e^{(t-\tau)\Delta} \left( |\cdot|^{\gamma} F(u(\tau)) \right)d\tau,
\end{aligned}\end{equation}
where $0 < t' < t.$ 
The first and second linear terms obviously tend to 0 as $n\to \infty$ and $t\to\infty.$
On the other hand, we may let $t'$ so close to $t$ so that the fourth term is small. 
Now that $t'$ is fixed, the third term can be written as $e^{(t-t')\Delta} f(t')$ with 
$f(t') \in L^q_{s_c}(\R^d)$, 
so we may use the semigroup property of $e^{t\Delta}$ and an 
approximation argument again. 
This completes the proof of the theorem.

\subsection{Proof of Theorem \ref{t:HH.LWP.sub}}
\begin{lem}\label{l:exist.sub}
Let real numbers $T\in (0,\infty),$ $\rho>0$ and $M>0$ satisfy 
\begin{equation}\label{l:exist.sub.c0}
	\rho + \tilde C_0 T^{\frac{\alpha-1}2(s_c-\tilde s)} M^\alpha \le M 
		\quad\text{and}\quad
	2 \tilde C_1 T^{\frac{\alpha-1}2(s_c-\tilde s)} M^{\alpha-1} <1,
\end{equation}
where $\tilde C_0$ and $\tilde C_1$ are as in Lemma \ref{l:Kato.est.sub}. 
Under conditions \eqref{t:HH.LWP.c0}, \eqref{t:HH.LWP.c1} and \eqref{t:HH.LWP.c2}, 
let $u_0 \in \mathcal{S}'(\R^d)$ be such that 
$e^{t\Delta}u_0 \in \tilde{\mathcal{K}}^s(T)$ for $T$ fixed as above.  
If $\|e^{t\Delta}u_0\|_{\tilde{\mathcal{K}}^s(T)}\le \rho,$ 
then a solution $u$ to \eqref{HH} exists 
such that $u -e^{t\Delta} u_0 \in C([0,T] ; L^q_{\tilde s}(\R^d))$ and 
$\|u\|_{\tilde{\mathcal{K}}^s(T)} \le M.$ 
\end{lem}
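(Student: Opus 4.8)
The plan is to run Banach's fixed point theorem exactly as in the proof of Lemma \ref{l:exist.crt}, but now in the subcritical Kato space $\tilde{\mathcal{K}}^s(T)$ in place of $\mathcal{K}^s(T)$, exploiting the decisive feature that the nonlinear estimates of Lemma \ref{l:Kato.est.sub} carry the factor $T^{\frac{\alpha-1}2(s_c-\tilde s)}$, which is a genuine smallness parameter as $T\to0$ since $\tilde s<s_c$. First I would set $d(u,v):=\|u-v\|_{\tilde{\mathcal{K}}^s(T)}$, check that $(\tilde{\mathcal{K}}^s(T),d)$ is a nonempty complete metric space, and introduce the closed ball $X_M:=\{u\in\tilde{\mathcal{K}}^s(T)\,;\,\|u\|_{\tilde{\mathcal{K}}^s(T)}\le M\}$. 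The map is $\Phi(u)=e^{t\Delta}u_0+N(u)$ as in \eqref{map}--\eqref{mapN}.

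Second, I would verify that $\Phi$ maps $X_M$ into itself and is a contraction there. Using \eqref{l:Kato.est.sub1} and the hypothesis $\|e^{t\Delta}u_0\|_{\tilde{\mathcal{K}}^s(T)}\le\rho$, for $u\in X_M$ one obtains
\[
\|\Phi(u)\|_{\tilde{\mathcal{K}}^s(T)}
\le \|e^{t\Delta}u_0\|_{\tilde{\mathcal{K}}^s(T)}
	+ \tilde C_0 T^{\frac{\alpha-1}2(s_c-\tilde s)}\|u\|_{\tilde{\mathcal{K}}^s(T)}^{\alpha}
\le \rho + \tilde C_0 T^{\frac{\alpha-1}2(s_c-\tilde s)} M^{\alpha} \le M,
\]
the last inequality being the first condition in \eqref{l:exist.sub.c0}. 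Likewise \eqref{l:Kato.est.sub2} gives, for $u,v\in X_M$,
\[
\|\Phi(u)-\Phi(v)\|_{\tilde{\mathcal{K}}^s(T)}
\le 2\tilde C_1 T^{\frac{\alpha-1}2(s_c-\tilde s)} M^{\alpha-1}\,\|u-v\|_{\tilde{\mathcal{K}}^s(T)},
\]
and the second condition in \eqref{l:exist.sub.c0} forces the prefactor to be strictly less than $1$. Banach's fixed point theorem then produces a unique $u\in X_M$ with $\Phi(u)=u$, i.e. a solution of \eqref{integral-eq} obeying $\|u\|_{\tilde{\mathcal{K}}^s(T)}\le M$.

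Third, I would upgrade the regularity of the Duhamel part $u-e^{t\Delta}u_0=N(u)$. Lemma \ref{l:subcrt.est} directly yields $N(u)\in L^\infty(0,T;L^q_{\tilde s}(\R^d))$ with $\|N(u)\|_{L^\infty(0,T;L^q_{\tilde s})}\le \tilde C_2 T^{\frac{\alpha-1}2(s_c-\tilde s)} M^{\alpha}$. To promote this to $u-e^{t\Delta}u_0\in C([0,T];L^q_{\tilde s}(\R^d))$ I would show that $t\mapsto N(u)(t)$ is continuous in $L^q_{\tilde s}$ and vanishes as $t\to0$, arguing as in the critical case by splitting the time integral and invoking the strict positivity of the relevant powers of $t$ together with the density of $C_0^\infty(\R^d)$ and dominated convergence; the continuity up to $t=0$ relies precisely on the $L^q_s\to L^q_{\tilde s}$ heat estimate of Lemma \ref{l:wLpLq} producing a positive power of $t$ throughout the subcritical range.

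The heavy lifting having been front-loaded into Lemmas \ref{l:Kato.est.sub} and \ref{l:subcrt.est}, the main obstacle is not the fixed-point step itself but the bookkeeping of parameter ranges: one must confirm that the standing hypotheses on $(q,s,\tilde s)$ make both Lemma \ref{l:Kato.est.sub} (conditions \eqref{l:Kato.est.sub.c1}--\eqref{l:Kato.est.sub.c2}) and Lemma \ref{l:subcrt.est} (conditions \eqref{l:subcrt.est.c1}--\eqref{l:subcrt.est.c2}) applicable for the same triple, exactly as \eqref{l:Kato.est.c1} and \eqref{l:crt.est.c1} were intersected in the critical argument, and that for any prescribed data (hence any $\rho$) the two inequalities in \eqref{l:exist.sub.c0} can actually be met. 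This last point is where the positive power of $T$ is essential: taking $M=2\rho$ and then $T>0$ small enough satisfies both conditions simultaneously, which is what upgrades the statement from small-data to arbitrary-data local existence. I would close by recording the continuity assertion, completing the proof.
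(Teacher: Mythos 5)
Your proposal is correct and follows essentially the same route as the paper: a Banach fixed-point argument in the closed ball of $\tilde{\mathcal{K}}^s(T)$ using the $T^{\frac{\alpha-1}{2}(s_c-\tilde s)}$-weighted estimates of Lemma \ref{l:Kato.est.sub}, followed by the regularity upgrade via Lemma \ref{l:subcrt.est} and the reconciliation of the parameter conditions, with the choice $M=2\rho$ and $T$ small exactly as in Remark \ref{r:exist.sub}.
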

\begin{rem}\label{r:exist.sub}
To meet condition \eqref{l:exist.sub.c0}, it suffices to take $M=2\rho$ and 
$T$ such that 
\[
T < \min\left\{  (2^{\alpha}\tilde C_0)^{-\frac{2}{(\alpha-1)(s_c-\tilde s)}} , 
			\, (2^{\alpha}\tilde C_1)^{-\frac{2}{(\alpha-1)(s_c-\tilde s)}} 
			\right\} \rho^{-\frac2{s_c-\tilde s}}.
\]
\end{rem}
\begin{proof}[Proof of Lemma \ref{l:exist.sub}]
Setting the metric $d(u,v) := \|u-v\|_{\tilde{\mathcal{K}}^s(T)}$, we may show that $(\tilde{\mathcal{K}}^s(T),d)$ is a nonempty complete metric space. Let
$X_M := \{ u \in\mathcal{K}^s(T) \,;\,  \|u\|_{\tilde{\mathcal{K}}^s(T)} \le M \}$ be the closed ball in $\tilde{\mathcal{K}}^s(T)$ centered at the origin with radius $M$.
Similarly to the critical case, we may prove 
that the map defined in \eqref{map} has a fixed point in $\tilde X_M,$ 
thanks to Lemma \ref{l:Kato.est.sub} and \eqref{l:exist.sub.c0}. 
Thus, Banach's fixed point theorem ensures the existence of 
a unique fixed point $u$ for the map $\Phi$ in $\tilde X_M.$ 

Having obtained a fixed point in $\tilde{\mathcal{K}}^s(T),$ 
we deduce $u -e^{t\Delta} u_0 \in L^\infty(0,T;L^q_{\tilde s}(\R^d))$ 
thanks to Lemma \ref{l:subcrt.est}, provided further that 
\eqref{t:HH.LWP.sub.cs}, \eqref{l:subcrt.est.c1} and \eqref{l:subcrt.est.c2} 
are satisfied. 
We see that $s<\tilde s < s_c$ imply 
\[
	\max\left\{\frac{\gamma}{\alpha-1}, \, \tilde s - \frac2{\alpha} \right\} 
	< \frac{\tilde s+\gamma}{\alpha} 
\]
so $\frac{s_c+\gamma}{\alpha}$ is a new lower bound for $s.$
In conjunction with this stronger lower bound $\frac{s_c+\gamma}{\alpha} \le s,$ 
there also appears a new upper bound for $\frac1{q}.$ 
More precisely, for such an $s$ satisfying \eqref{l:Kato.est.sub.c2}
and \eqref{l:subcrt.est.c2} to exist, $q$ must satisfy, 
in addition to \eqref{l:Kato.est.sub.c1} and \eqref{l:subcrt.est.c1}, 
\begin{equation}\label{t:HH.LWP.sub:pr2}
	\frac1{q} < \frac1{d\alpha} \left(\frac{2\alpha + \gamma}{\alpha-1} -\tilde s\right). 
\end{equation}
Indeed, $\frac{\tilde s+\gamma}{\alpha} < s_c$ 
is equivalent to $\frac1{q} <\frac1{d\alpha} (\frac{2\alpha + \gamma}{\alpha-1} -\tilde s).$ We notice that 
$\frac1{\alpha} (1-\frac{\tilde s}{d}) < \frac1{\alpha} (1-\frac{\gamma}{d(\alpha-1)} )$ 
and 
$\frac1{d\alpha} (\frac{2\alpha + \gamma}{\alpha-1} -\tilde s) > 
\frac1{d} (\frac{2 + \gamma}{\alpha-1} -\tilde s)$ as $\frac{\gamma}{\alpha-1} < \tilde s.$ 
Thus, combining \eqref{l:Kato.est.sub.c1} and \eqref{t:HH.LWP.sub:pr2}, we deduce 
that the conditions for $q$ are \eqref{t:HH.LWP.sub.c1}
\end{proof}

We omit the proofs of $(i),$ $(ii)$ and $(iii)$ of Theorem \ref{t:HH.LWP.sub} 
as they are standard. We only prove $(iv).$
	\subsubsection{Proof of $(iv)$}
Let $u_0 \in L^q_{\tilde s}(\R^d)$ be such that $T_m = T_m(u_0)$ is finite and let 
$u \in C([0,T_m) ; L^q_{\tilde s}(\R^d))$ be the maximal solution of \eqref{HH}. 
Fix $t_0 \in (0,T_m)$ and so that we may express the maximal solution by 
\begin{equation}\nonumber
	u(t+t_0) = e^{t\Delta} u(t_0) + \int_0^t e^{(t-\tau)\Delta} \left\{ |\cdot|^{\gamma} F(u(t_0+\tau,\cdot)) \right\} d\tau, \quad 0\le t < T_m - t_0.
\end{equation}
We observe that 
\begin{equation}\nonumber 
	\|u(t_0)\|_{L^q_{\tilde s}} 
		+ \tilde C_0 (T_m-t_0)^{\frac{\alpha-1}2(s_c-\tilde s)} M^\alpha > M
\end{equation}
holds for all $M>0,$ where $\tilde C_0$ is as in \eqref{l:Kato.est.sub1}.
Otherwise there exists $M>0$ such that 
\begin{equation}\nonumber
	\|u(t_0)\|_{L^q_{\tilde s}} 
		+ \tilde C_0 (T_m-t_0)^{\frac{\alpha-1}2(s_c-\tilde s)} M^\alpha \le M
\end{equation}
so that one may argue as in the proof of existence to obtain a local solution such that 
$\|u(t + t_0)\|_{L^q_{\tilde s}} \le M$ for $t\in [0,T_m -t_0]$ and in particular, 
$u(T_m)$ is well-defined in $L^q_s(\R^d),$ contradicting the definition of $T_m.$ 
Let $M = 2\|u(t_0)\|_{L^q_{\tilde s}}$ so that 
\begin{equation}\nonumber
	\|u(t_0)\|_{L^q_{\tilde s}} + 2^{\alpha} \tilde C_0 \|u(t_0)\|_{L^q_{\tilde s}}^\alpha (T_m-t_0)^{\frac{\alpha-1}2(s_c-\tilde s)}  > 2 \|u(t_0)\|_{L^q_{\tilde s}}, 
\end{equation}
which yields \eqref{t:HH.LWP:Tm}. 
In particular, $\|u(t)\|_{L^q_{\tilde s}} \to \infty$ as $t\to T_m.$ 
Thus, we conclude Theorem \ref{t:HH.LWP.sub}. 

\subsection{Proof of Theorem \ref{t:HH.self.sim}}
Let $\psi (x) := |x|^{-\frac{2+\gamma}{\alpha-1}}$ for $x\ne 0$. 
We first claim that a initial data $u_0$ given by $u_0(x):= c\psi(x)$ with a sufficiently small $c$ satisfies the all assumptions of $(v)$ in Theorem \ref{t:HH.LWP} with $T=\infty,$ thereby generating a global solution to the Cauchy problem (\ref{HH}) with the initial data $u_0$. Since $\psi \in L^1_{loc}(\R^d)$ as $\alpha>\alpha_F(d,\gamma),$ $\psi \in \mathcal{S}'(\R^d)$ and $e^{t\Delta} \psi$ is well-defined. Since $s<s_c,$ there exist $s_1, s_2 \in\R$ such that $s<s_1 < s_c <s_2.$ As in the proof of \cite[Theorem 1.3]{BenTayWei2017}, we can prove that $\psi$ can be decomposed into $\psi = \psi_1 + \psi_2,$ 
$\psi_1 := \chi_{|x|>1} \psi$ and $\psi_2 := \chi_{|x|<1} \psi$ 
so that $\psi_1 \in L^q_{s_1}(\R^d)$ and $\psi_2 \in L^q_{s_2}(\R^d).$ 
This implies that the estimate $\|e^{\Delta} \psi\|_{L^q_s} 
	\le C( \| \psi_1\|_{L^q_{s_1}} + \| \psi_2\|_{L^q_{s_2}})$ holds, 
thanks to Lemma \ref{l:wLpLq}. By the homogeneity of the data, we deduce
$\|e^{t\Delta} \psi\|_{\mathcal{K}^s} < \infty.$ Thus, if the constant $c$ is taken small enough so that $(v)$ in Theorem \ref{t:HH.LWP} is satisfied, the initial data $u_0=c\psi$ generates a unique global solution to (\ref{HH}). 

Let $\varphi := \omega \psi$ be as in the assumption of Theorem \ref{t:HH.self.sim}. 
Then we note that $\varphi$ is homogeneous of degree $-\frac{2+\gamma}{\alpha-1}.$ 
We show that the global solution $u$ to (\ref{HH}) with the initial data 
$\varphi$, which is obtained by $(v)$ in Theorem \ref{t:HH.LWP}, is also self-similar. To this end, for $\lambda>0$, 
let $\varphi_{\lambda}$ be defined by 
$\varphi_{\lambda} (x) := \lambda^{\frac{2-\gamma}{\alpha-1}} \varphi(\lambda x).$  
Since the identity $\|\varphi_{\lambda}\|_{\mathcal{K}^s} = \|\varphi \|_{\mathcal{K}^s}$ holds for all $\lambda>0,$ it follows that $\varphi_{\lambda}$ also 
satisfies the assumptions of $(v)$ in Theorem \ref{t:HH.LWP}. 
As $u_\lambda$ given by \eqref{scale} is a solution of \eqref{HH} with initial data 
$\varphi_{\lambda},$ and $\|u_{\lambda}\|_{\mathcal{K}^s} = \|u \|_{\mathcal{K}^s}$ 
for all $\lambda>0,$ we deduce that $u$ must be self-similar since $\varphi_{\lambda}=\varphi$. We denote the global self-similar solution $u$ by $u_{\mathcal{S}}$. The fact $u_{\mathcal{S}}(t)\rightarrow\varphi$ in $\mathcal{S}'(\R^d)$ as $t\rightarrow +0$ follows from $(iii)$ in Lemma \ref{l:exist.crt}. This completes the proof of Theorem \ref{t:HH.self.sim}. 

\section{Nonexistence of local positive weak solution}
In this section we give a proof of Theorem \ref{t:nonex}. As the argument is standard, we only give a sketch of the proof. 
For the details, we refer to \cite[Proposition 2.4, Theorem 2.5]{II-15}. 
\subsection{Proof of Theorem \ref{t:nonex}}
Let $T\in (0,1)$. Suppose that the conclusion of Theorem \ref{t:nonex} does not hold. 
Then there exists a positive weak solution $u$ on $[0,T)$ 
(See Definition \ref{d:w.sol}). Let 
\[
\psi_T(t,x) := \eta\left(\frac{t}{T}\right) \phi\left(\frac{x}{\sqrt{T}}\right),
\]
where $\eta \in C^\infty_0([0,\infty))$ and $\phi\in C^\infty_0(\mathbb R^d)$ are such that
\[
\eta (t)
:= 
\begin{cases}
1,\quad 0\le t \le \frac12,\\
0,\quad t\ge1,
\end{cases}
\quad \text{and}\quad 
\phi(x)
:= 
\begin{cases}
1,\quad |x| \le \frac12,\\
0,\quad |x|\ge1.
\end{cases}
\]
Let $l\in\mathbb N$ with $l\ge3$, which will be chosen later. 
We note that $\psi_T^l\in C^{1,2}([0,T)\times \R^d)$ and the estimates $|\partial_t \{\psi_T (t,x)\}^l|\le \frac{C}{T} \psi_T(t,x)^{l-1}$ 
and $|\partial_{x_j}^2\{\psi_T(t,x)^l\}|\le \frac{C}{T} \psi_T(t,x)^{l-1}$ hold for $j=1,\ldots, d.$ We define a function $I:[0,T)\rightarrow \R_{\ge 0}$ given by
\[
  I(T):=\int_{[0,T)\times \{|x|<\sqrt{T}\}}|x|^{\gamma} u(t,x)^{\alpha} \, \psi_T^l \, dtdx.
\]
We note that $I(T)<\infty$, since $u\in L_t^{\alpha}(0,T;L^{\alpha}_{\frac{\gamma}{\alpha},loc}(\R^d))$.
By using the weak form (\ref{weak}) and the above estimates, the estimates hold:
\[
\begin{aligned}
I(T) + \int_{|x|<\sqrt{T}} u_0(x) \phi^l\left(\frac{x}{\sqrt{T}}\right)\, dx
& = 
\left|\int_{[0,T)\times \{|x|<\sqrt{T}\}}u(\partial_t \psi_T^l + \Delta \psi_T^l )\,dt\,dx \right|\\
& \le \frac{C}{T}
\int_{[0,T)\times \{|x|<\sqrt{T}\}}|u| \psi_T^{\frac{l}{\alpha}} \,dt\,dx.
\end{aligned}
\]
Here we choose $l$ as 
\begin{equation}\nonumber
	-\frac{l}{\alpha}+l-2>0, \quad\text{i.e.,}\quad l > \frac{2\alpha}{\alpha-1}.
\end{equation}
By H\"older's inequality and Young's inequality, we may estimate the integral 
in the right-hand side above by 
\[
\begin{aligned}
T^{-1}&\int_{[0,T)\times \{|x|<\sqrt{T}\}}|u| \psi_T^{\frac{l}{\alpha}}\, dtdx 
\le I(T)^\frac{1}{\alpha} \cdot T^{-1}K(T)^\frac{1}{\alpha'}
\le \frac12 I(T) + \frac{C}{T^{\alpha'}}K(T).
\end{aligned}
\]
where $1= \frac{1}{\alpha} + \frac{1}{\alpha'}$, i.e., $\alpha'=\frac{\alpha}{\alpha-1}$, and 
\[
K(T) :=  \int_{[0,T)\times \{|x|<\sqrt{T}\}}(|x|^{-\frac{\gamma}{\alpha}})^{\alpha'}\, dtdx
=  T\int_{|x|<\sqrt{T}}|x|^{-\frac{\gamma}{\alpha-1}}\, dx=CT^{1-\frac{\gamma}{2(\alpha-1)}+\frac{d}{2}}
\]
due to $\alpha>1+\gamma/d$. Summarizing the estimates obtained now, we have
\begin{equation}\label{ineq1}
\begin{aligned}
\int_{|x|<\sqrt{T}} u_0(x) \phi^l\left(\frac{x}{\sqrt{T}}\right)\, dx
 \le 
I(T) + 2\int_{|x|<\sqrt{T}} u_0(x) \phi^l\left(\frac{x}{\sqrt{T}}\right)\, dx
 \le 
 C T^{- \frac{2+\gamma}{2(\alpha-1)} + \frac{d}{2}}.
\end{aligned}
\end{equation}
We now choose the initial data $u_0$ as
\[
u_0(x) := 
\begin{cases}
	|x|^{-\beta} \quad & |x|\le 1,\\
	0 & \text{otherwise}
\end{cases}
\]
with 
\begin{equation}\label{beta1}
	\beta< \min\left\{s + \frac{d}{q},d\right\}.
\end{equation}
Then $u_0 \in L^q_s (\R^d)$ and by $T<1$ and $\beta<d$, we have
\begin{equation}\label{ineq2}
\begin{aligned}
	\int_{|x|<\sqrt{T}} u_0(x) \phi^l\left(\frac{x}{\sqrt{T}}\right)\, dx
	& = T^{-\frac{\beta-d}{2}}	\int_{|y|<1} |y|^{-\beta} \phi^l(y)\, dx
	 = C T^{-\frac{\beta-d}{2}}.
\end{aligned}
\end{equation}
Combining \eqref{ineq1} and \eqref{ineq2}, we obtain
\begin{equation}\label{contradiction}
0< C \le T^{\frac{\beta}{2} - \frac{2+\gamma}{2(\alpha-1)}} \to 0 \quad \text{as }T\to 0,
\end{equation}
where
\begin{equation}\label{beta2}
\frac{\beta}{2} - \frac{2+\gamma}{2(\alpha-1)} >0\quad \text{i.e.}\quad \beta > \frac{2+\gamma}{\alpha-1},
\end{equation}
which leads to a contradiction. Thus the proposition holds if we take 
$\beta$ satisfying \eqref{beta1} and \eqref{beta2}, which amount to $s>s_c$ and $\alpha>\alpha_F(d,\gamma)$. The proof is complete.

\section{Appendix}
\par
We list basic properties of the weighted Lebesgue spaces $L^q_s(\R^d)$. 
\begin{prop}
\label{p:wL.sp}
Let $s\in\R$ and $q\in [1,\infty].$ Then the following holds:
\begin{enumerate}[$(1)$]
\item The space $L^q_{s}(\R^d)$ is a Banach space. 
\item $C_0^\infty(\R^d)$ is dense in $L^q_{s}(\R^d)$ if $q$ and $s$ satisfy 
	\begin{equation}\nonumber	
		1\le q < \infty	\quad\text{and}\quad
		-\frac{d}{q} < s < d\left( 1-\frac{1}{q} \right).
	\end{equation}
\item For $s_1, s_2 \in \R,$ $q_1, q_2 \in [1,\infty],$ we have 
	\begin{equation}\nonumber
	\|f\|_{L^q_s} \le \|f\|_{L^{q_1}_{s_1}}^{\theta} \|f\|_{L^{q_2}_{s_2}}^{1-\theta}
	\end{equation}
	for $s = \theta s_1 + (1-\theta) s_2,$ 
	$\frac1{q} = \frac{\theta}{q_1} + \frac{1-\theta}{q_2}$ and $\theta \in (0,1).$ 
\end{enumerate}
\end{prop}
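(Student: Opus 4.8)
The plan is to treat the three assertions separately, exploiting throughout the elementary observation that $L^q_s(\R^d)$ is nothing but the ordinary Lebesgue space $L^q(\R^d, |x|^{sq}\,dx)$; equivalently, that the multiplication map $M_s : f \mapsto |x|^s f$ is, by the very definition of the norm, an isometry from $L^q_s(\R^d)$ onto $L^q(\R^d)$.

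For assertion $(1)$ I would first record that $M_s$ is a linear bijection with inverse $M_{-s} : g \mapsto |x|^{-s} g$, since $|x|^s > 0$ for a.e. $x \in \R^d$; in particular $\|f\|_{L^q_s} = 0$ forces $|x|^s f = 0$ a.e. and hence $f = 0$ a.e., while the remaining norm axioms for $\|\cdot\|_{L^q_s}$ are inherited from those of $\|\cdot\|_{L^q}$ through $M_s$. Completeness then transfers directly: given a Cauchy sequence $(f_n)$ in $L^q_s$, the images $(M_s f_n)$ form a Cauchy sequence in the complete space $L^q(\R^d)$ and converge to some $g$, whereupon $M_{-s} g$ is the $L^q_s$-limit of $(f_n)$. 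Hence $L^q_s(\R^d)$ is a Banach space.

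For assertion $(3)$ I would argue by a single application of H\"older's inequality. Using $s = \theta s_1 + (1-\theta) s_2$ I would factor the integrand as $|x|^{sq} |f|^q = (|x|^{s_1}|f|)^{\theta q} (|x|^{s_2}|f|)^{(1-\theta)q}$ and apply H\"older with the exponents $p_1 = q_1/(\theta q)$ and $p_2 = q_2/((1-\theta)q)$. The relation $\frac1{q} = \frac{\theta}{q_1} + \frac{1-\theta}{q_2}$ is exactly what guarantees both $p_i \ge 1$ and $\frac1{p_1} + \frac1{p_2} = 1$, and the exponents collapse to yield $\|f\|_{L^q_s}^q \le \|f\|_{L^{q_1}_{s_1}}^{\theta q} \|f\|_{L^{q_2}_{s_2}}^{(1-\theta)q}$; taking $q$-th roots gives the claim. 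The endpoint cases $q_1 = \infty$ or $q_2 = \infty$ are handled by replacing the corresponding H\"older factor with the pointwise bound $|x|^{s_i}|f| \le \|f\|_{L^\infty_{s_i}}$ a.e.

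The only assertion demanding genuine care is the density statement $(2)$, and this is where I expect the bulk of the (still routine) work to lie. The hypothesis $-\frac{d}{q} < s$ is precisely what ensures $C_0^\infty(\R^d) \subset L^q_s(\R^d)$ in the first place, since a test function not vanishing at the origin behaves like $|x|^s$ there, which is $|x|^{sq}\,dx$-integrable near $0$ only when $sq > -d$. Granting containment, I would approximate a given $f \in L^q_s$ in two steps. First, multiplying by a smooth cutoff $\chi_\epsilon$ equal to $1$ on $\{2\epsilon < |x| < (2\epsilon)^{-1}\}$ and supported in $\{\epsilon < |x| < \epsilon^{-1}\}$, I obtain $\chi_\epsilon f \to f$ in $L^q_s$ as $\epsilon \to 0$ by dominated convergence applied to $|x|^s f \in L^q$ (here $q < \infty$ is essential). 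Second, on any fixed annulus bounded away from the origin the weight $|x|^s$ is bounded above and below by positive constants, so $\chi_\epsilon f \in L^q(\R^d)$ and may be approximated in the unweighted $L^q$-norm, hence in the locally equivalent $L^q_s$-norm, by mollification, producing functions in $C_0^\infty(\R^d)$ supported away from $0$. The upper restriction $s < d(1-\frac1{q})$ places the weight $|x|^{sq}$ in the Muckenhoupt class $A_q$, which is the natural framework for such density results and matches the optimal range appearing in Lemma~\ref{l:wLpLq}; I would invoke it to keep the argument uniform across the admissible parameters.
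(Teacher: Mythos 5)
Your treatment of assertions $(1)$ and $(3)$ coincides with the paper's: the paper likewise views $L^q_s(\R^d)$ as the Lebesgue space of the measure $|x|^{sq}\,dx$ (equivalent to your isometry $M_s$) and proves $(3)$ by exactly the H\"older factorization you describe. The genuine divergence is in the density assertion $(2)$. The paper disposes of it in one line by observing that the condition $-\frac{d}{q}<s<d(1-\frac1q)$ is precisely the condition for $|x|^{sq}$ to lie in the Muckenhoupt class $A_q$ and then citing Nakai--Tomita--Yabuta for density of $C_0^\infty$ in weighted Lebesgue spaces with $A_q$ weights; you instead give a self-contained two-step approximation (smooth truncation to an annulus $\{\epsilon<|x|<\epsilon^{-1}\}$ by dominated convergence in the weighted norm, then mollification where the weight is comparable to a constant). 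Your argument is correct and arguably more informative: it isolates the lower bound $s>-\frac{d}{q}$ as the only condition actually used (it is what guarantees $C_0^\infty\subset L^q_s$ in the first place), whereas the upper bound $s<d(1-\frac1q)$ plays no role in your proof --- so your closing remark that you would ``invoke'' the $A_q$ property is superfluous and can be dropped. What the paper's route buys is brevity and uniformity with the rest of its $A_q$-based toolkit (Lemma 2.1); what yours buys is independence from the cited reference and a slightly stronger conclusion.
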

\begin{proof}
$(1)$ The space $L^q_{s}(\R^d)$ is a Lebesgue space 
with a measure $d\mu = |x|^{sq} \,dx.$ See any standard textbook for the proof of 
its completeness. \\
$(2)$ Recall that the weight $|x|^{sq}$ belongs to 
the Muckenhoupt class $A_q$ if and only if $- \frac{d}{q} < s < d(1- \frac1{q})$ 
when $q\in (1,\infty),$ and $|x|^{s} \in A_1$ if and only if $-d<s\le 0$ when $q=1.$ 
Now the density follows from \cite{NakTomYab2004}[Theorem 1.1]. 
\\
$(3)$ For $s$ and $q$ as in the assumption, we have 
\begin{align*}\nonumber
	\|f\|_{L^q_s} 
	&\le \||\cdot|^{s_1} f\|_{L^{q_1}}^{\theta} \||\cdot|^{s_2} f\|_{L^{q_2}}^{1-\theta} 
	= \|f\|_{L^{q_1}_{s_1}}^{\theta}  \|f\|_{L^{q_2}_{s_2}}^{1-\theta}. 
\end{align*}
\end{proof}
The following pointwise bound is well-known in the literature. 
\begin{lem}\label{l:g.unfrm.bnd}
Let $d\in\N,$ $q\in[1,\infty)$ and $a,b,c\in\R.$ 
Let $g(x):=(4\pi)^{-\frac{d}2} e^{-\frac{|x|^2}4}.$ 
\begin{enumerate}[$(1)$]
\item 
There exists a constant $C$ depending only on $d,$ $q,$ $a$ and $b$ such that 
\begin{equation}\nonumber
	\sup_{x\in\R^d} \int_{\R^d} ( |y|^{-a} |x-y|^{b} g(x-y))^q \,dy \le C
\end{equation}
provided that $0\le a<\frac{d}{q}$ and $b\ge0.$ 
\item 
There exists a constant $C$ depending only on $d,$ $q$ and $c$ such that 
\begin{equation}\nonumber
	\sup_{x\in\R^d} \int_{\R^d} ( |y|^{-c} g(x-y))^q \, dy \le C
\end{equation}
provided that $0\le c<\frac{d}{q}.$
\end{enumerate}
\end{lem}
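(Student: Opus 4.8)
The plan is to prove the more general bound $(1)$ directly and then obtain $(2)$ as the special case $b=0$. Both integrals are handled by the same device: split the domain into the region $\{|y|\le 1\}$, where the weight $|y|^{-aq}$ (resp. $|y|^{-cq}$) is singular but integrable, and the complementary region $\{|y|>1\}$, where that same weight is bounded by $1$ while the Gaussian controls the rest. Writing $g(x-y)^q = (4\pi)^{-\frac{dq}2} e^{-\frac{q|x-y|^2}4}$, the task reduces to bounding
\[
	\int_{\R^d} |y|^{-aq} |x-y|^{bq} e^{-\frac{q|x-y|^2}4}\,dy
\]
uniformly in $x\in\R^d$.

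For the inner region $|y|\le 1$, I would first note that $t\mapsto t^{bq} e^{-\frac{q t^2}4}$ is bounded on $[0,\infty)$ by a constant $C_b$ depending only on $b$ and $q$: the hypothesis $b\ge0$ rules out a singularity at $t=0$, and the Gaussian dominates the polynomial growth at infinity. Hence $|x-y|^{bq} e^{-\frac{q|x-y|^2}4}\le C_b$ for all $x,y$, and
\[
	\int_{|y|\le 1} |y|^{-aq} |x-y|^{bq} e^{-\frac{q|x-y|^2}4}\,dy
	\le C_b \int_{|y|\le 1} |y|^{-aq}\,dy,
\]
which is finite precisely because $aq<d$, i.e. $a<\frac{d}{q}$. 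This bound is manifestly independent of $x$.

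For the outer region $|y|>1$, I would use $a\ge0$ to get $|y|^{-aq}\le 1$, so that
\[
	\int_{|y|>1} |y|^{-aq} |x-y|^{bq} e^{-\frac{q|x-y|^2}4}\,dy
	\le \int_{\R^d} |x-y|^{bq} e^{-\frac{q|x-y|^2}4}\,dy
	= \int_{\R^d} |z|^{bq} e^{-\frac{q|z|^2}4}\,dz
\]
by the translation $z=x-y$; the last integral is a finite constant independent of $x$, since the Gaussian makes $|z|^{bq} e^{-\frac{q|z|^2}4}$ integrable for any $b\ge0$. Adding the two contributions yields $(1)$, and specializing to $b=0$ (so $C_b=1$ and the outer integral is just a fixed multiple of the $L^q$-mass of the heat kernel) gives $(2)$ with $c$ in place of $a$.

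This argument is entirely elementary, so I do not expect a genuine obstacle; the only points demanding attention are matching the integrability thresholds to the hypotheses and ensuring each estimate is uniform in $x$. The near-origin step is exactly where the strict inequality $a<\frac{d}{q}$ (resp. $c<\frac{d}{q}$) is indispensable and cannot be relaxed, while the far-field step is exactly where $a\ge0$ (resp. $c\ge0$) and $b\ge0$ enter. Uniformity in $x$ is automatic because $C_b$ and the translated Gaussian integral are both independent of $x$.
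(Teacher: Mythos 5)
Your argument is correct, and every hypothesis is used exactly where it should be: the strict inequality $aq<d$ only for the local integrability of $|y|^{-aq}$ near the origin, $a\ge0$ only to bound the weight by $1$ in the far region, and $b\ge0$ only to keep $t\mapsto t^{bq}e^{-qt^2/4}$ bounded. The route differs from the paper's in the choice of decomposition. You split at the fixed ball $\{|y|\le1\}$, absorb the factor $|x-y|^{bq}g(x-y)^q$ into a uniform constant on the inner region, and use translation invariance of $\int_{\R^d}|z|^{bq}e^{-q|z|^2/4}\,dz$ on the outer region. The paper instead splits according to whether $|y|<|x-y|$ or $|y|>|x-y|$: on the first set it uses the rapid decay $g(z)\le C\langle z\rangle^{-N}$ together with $\langle x-y\rangle^{-(d+1)}\le\langle y\rangle^{-(d+1)}$ to transfer all the decay onto the $y$ variable, and on the second it uses $|y|^{-a}\le|x-y|^{-a}$ (valid since $a\ge0$) to reduce to the translate-invariant integral $\int|z|^{-(a-b)q}g(z)^q\,dz$, finite because $(a-b)q\le aq<d$. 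Both reductions rest on the same two facts, namely $\int_{|y|\le R}|y|^{-aq}\,dy<\infty$ iff $aq<d$ and the Gaussian dominating any polynomial; your fixed-radius splitting makes the uniformity in $x$ slightly more transparent, while the paper's splitting is the scale-invariant one that would survive if the Gaussian were replaced by a homogeneous kernel. Deriving $(2)$ from $(1)$ by setting $b=0$ is also legitimate, whereas the paper simply remarks that the second proof is analogous.
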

\begin{proof}
In what follows, we shall use the fact that there exists an absolute constant $C$ such that 
\begin{equation}\label{l:g.unfrm.bnd:pr1}
	g(x) \le C \langle x\rangle^{-N}
\end{equation}
for any $N\in\N,$ where $\langle x\rangle :=(1+|x|^2)^{\frac{1}2}.$ Let 
\begin{equation}\nonumber
	\begin{aligned}
	I(x) &:= \int_{\R^d} ( |y|^{-a} |x-y|^{b} g(x-y))^q \, dy \\
	&= \int_{|y|< |x-y|} ( |y|^{-a} |x-y|^{b} g(x-y))^q \, dy 
			+ \int_{|y|> |x-y|} ( |y|^{-a} |x-y|^{b} g(x-y))^q \, dy \\
	&=: I_1 (x) + I_2 (x).
	\end{aligned}
\end{equation}
Thanks to \eqref{l:g.unfrm.bnd:pr1} and $0\le b,$ we have 
\begin{align*}
	I_1(x) \le C \int_{|y|< |x-y|} |y|^{-aq} \langle x-y\rangle^{-(d+1)}\, dy 
	\le C  \int_{|y|< |x-y|} |y|^{-aq} \langle y\rangle^{-(d+1)} \, dy 
	< \infty,
\end{align*}
if $aq<d.$ Moreover, we have 
\begin{equation*}
	I_2(x) \le C \int_{|y|> |x-y|} |x-y|^{-(a-b)q}  g(x-y)^q \, dy 
	\le C \int_{\R^d} |y|^{-(a-b)q}  g(y)^q \, dy < \infty,
\end{equation*}
if $a\ge 0$ and $(a-b)q <d.$ 
Thus, $I(x) < \infty$ uniformly with respect to $x\in\R^d.$ 
The proof for the second inequality is similar so we omit it. 
\end{proof}

We recall the following elementary characterization of $L^1(\R^d)$-functions. 
\begin{prop}	\label{p:L1sg.dcy}
If $f \in L^1(\R^d),$ then 
\begin{equation}\nonumber
	\liminf_{|x|\to 0} |x|^d |f(x)| = \liminf_{|x|\to \infty} |x|^d |f(x)| =0.
\end{equation}
\end{prop}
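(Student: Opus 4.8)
The plan is to prove each of the two limits by contradiction, in both cases exploiting the borderline non-integrability of the weight $|x|^{-d}$, that is, the logarithmic divergence of $\int r^{-1}\,dr$ at $0$ and at $\infty$.

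First I would treat the behavior at the origin. Suppose, for contradiction, that $\liminf_{|x|\to0}|x|^d|f(x)| = L > 0$ (allowing $L=+\infty$). Fix any constant $c$ with $0<c<L$. By the very definition of the liminf, there is $\delta>0$ such that $|x|^d|f(x)|\ge c$ for all $x$ with $0<|x|<\delta$; equivalently, $|f(x)|\ge c\,|x|^{-d}$ on the punctured ball $\{0<|x|<\delta\}$. Passing to polar coordinates then gives
\[
	\int_{|x|<\delta}|f(x)|\,dx
	\ge c\int_{|x|<\delta}|x|^{-d}\,dx
	= c\,\omega_{d-1}\int_0^\delta r^{-1}\,dr = +\infty,
\]
where $\omega_{d-1}$ denotes the surface measure of the unit sphere in $\R^d$. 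This contradicts $f\in L^1(\R^d)$, so the liminf at the origin must vanish.

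The behavior at infinity is entirely analogous. Assuming $\liminf_{|x|\to\infty}|x|^d|f(x)| = L>0$ and fixing $0<c<L$, one obtains $R>0$ with $|f(x)|\ge c\,|x|^{-d}$ for all $|x|>R$, whence
\[
	\int_{|x|>R}|f(x)|\,dx
	\ge c\,\omega_{d-1}\int_R^\infty r^{-1}\,dr = +\infty,
\]
again contradicting integrability. There is no substantial obstacle in this argument; the only points requiring care are the correct translation of the positivity of the liminf into the pointwise lower bound $|f|\ge c\,|x|^{-d}$ on a neighborhood (of $0$ or of $\infty$), and the observation that $|x|^{-d}$ is precisely the critical weight whose radial integral $\int r^{d-1}\cdot r^{-d}\,dr = \int r^{-1}\,dr$ diverges at both endpoints. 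Everything else is routine.
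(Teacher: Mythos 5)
Your proof is correct and follows essentially the same route as the paper: both argue by contradiction/contraposition, converting the positivity of the liminf into the pointwise lower bound $|f(x)|\ge c\,|x|^{-d}$ near $0$ (resp.\ near $\infty$) and then using the logarithmic divergence of $\int r^{-1}\,dr$ to contradict integrability. The only cosmetic difference is that the paper takes $c/2$ where you take an arbitrary $0<c<L$; nothing of substance changes.
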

\begin{proof}
We show the contrapositive. Suppose that 
$\displaystyle\liminf_{|x|\to0} |x|^d |f(x)| = c>0.$ 
Then there exists some positive $\delta$ such that $\frac{c}2 \le |x|^d |f(x)|$ 
for $|x|\le \delta.$ Thus, 
\begin{equation}\nonumber
	\int_{|x|\le\delta} |f(x)| dx \ge c \int_0^\delta r^{-1} \,dr
	= c \left[ \log r\right]_{0}^r = + \infty,
\end{equation}
which implies $f\notin L^1(\R^d).$ The second equality is similarly proved. 
\end{proof}
As a corollary, we have the following. 
\begin{cor}\label{c:wLp.sg.dcy}
Let $s\in\R$ and $p \in [1,\infty].$ 
If $f \in L^{p}_{s}(\R^d),$ then 
\begin{equation}\nonumber
	\liminf_{|x|\to 0} |x|^{s+\frac{d}{p}} |g(x)| 
	= \liminf_{|x|\to \infty} |x|^{s+\frac{d}{p}} |g(x)| =0.
\end{equation}
\end{cor}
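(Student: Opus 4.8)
The plan is to deduce the corollary from the $L^1$-statement in Proposition \ref{p:L1sg.dcy} by a change of unknown that absorbs both the weight and the exponent $p$ into a single $L^1$-function. The key observation is that, by the very definition of the weighted norm, a function $f$ lies in $L^p_s(\R^d)$ with $p\in[1,\infty)$ precisely when the nonnegative function $h:=\big(|\cdot|^s|f|\big)^p=|\cdot|^{sp}|f|^p$ belongs to $L^1(\R^d)$.

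First I would apply Proposition \ref{p:L1sg.dcy} to this $h\in L^1(\R^d)$, which yields
\[
	\liminf_{|x|\to 0} |x|^d h(x) = \liminf_{|x|\to\infty} |x|^d h(x) = 0.
\]
Next I would rewrite the integrand: since
\[
	|x|^d h(x) = |x|^{d+sp}|f(x)|^p = \big(|x|^{s+\frac{d}{p}}|f(x)|\big)^p,
\]
both liminfs above are liminfs of the $p$-th power of the quantity $|x|^{s+\frac{d}{p}}|f(x)|$ that appears in the statement.

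The only point requiring a word of justification — and the closest thing to an obstacle, though it is elementary — is the interchange of $\liminf$ with the map $t\mapsto t^p$. Because $t\mapsto t^p$ is a continuous, strictly increasing bijection of $[0,\infty)$ onto itself, for any nonnegative function $\phi$ one has $\liminf \phi^p=(\liminf \phi)^p$; in particular $\liminf\phi^p=0$ if and only if $\liminf\phi=0$. Applying this with $\phi(x)=|x|^{s+\frac{d}{p}}|f(x)|$ in both limits $|x|\to0$ and $|x|\to\infty$ gives exactly the claimed conclusion. I would present the argument for $1\le p<\infty$, which is the range actually invoked in the optimality argument of Lemma \ref{l:wLpLq} (there with $p\equiv q<\infty$); note that the reduction to $L^1$ is intrinsically unavailable when $p=\infty$, where $d/p=0$ and the asserted decay can genuinely fail (e.g. for $f(x)=|x|^{-s}$, for which $|x|^{s}|f(x)|\equiv1$), so the finite range is the substantive and relevant one.
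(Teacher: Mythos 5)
Your proof is correct and is exactly the argument the paper intends: the corollary is stated without proof, and the only natural deduction is the one you give, namely that $f\in L^p_s(\R^d)$ with $p<\infty$ means $h:=\bigl(|\cdot|^{s}|f|\bigr)^p\in L^1(\R^d)$, after which Proposition \ref{p:L1sg.dcy} and the identity $|x|^d h(x)=\bigl(|x|^{s+\frac{d}{p}}|f(x)|\bigr)^p$ together with monotonicity of $t\mapsto t^p$ give the claim. Your side remark is also accurate: as printed the corollary allows $p=\infty$, where the assertion genuinely fails (e.g.\ $f(x)=|x|^{-s}$ lies in $L^\infty_s(\R^d)$ but $|x|^{s}|f(x)|\equiv 1$), so the statement should be read with $p<\infty$ --- which is the only range actually invoked in the optimality argument of Lemma \ref{l:wLpLq}.
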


Finally, we give a proof of the fact that the $L^q_{\tilde{s}}(\R^d)$-mild solutions 
also satisfy the equation \eqref{HH} in the distributional sense. 
\begin{lem}	\label{mildweak}
We assume the same assumptions as in Theorem \ref{t:HH.LWP} (resp. Theorem \ref{t:HH.LWP.sub}). Let $u$ be a $L^q_{\tilde{s}}(\R^d)$-mild solution on $[0,T)$ in the sense of Definition \ref{def:sol-A}. 
Then $u$ is a weak solution in the sense of Definition \ref{d:w.sol}.
\end{lem}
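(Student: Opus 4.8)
The plan is to substitute the Duhamel formula \eqref{integral-eq} into the weak formulation \eqref{weak} and reduce the whole identity to the weak form of the \emph{linear} heat equation, which is then integrated against the Duhamel time variable. Throughout, write $f(\tau,\cdot):=|\cdot|^{\gamma}F(u(\tau,\cdot))$ for the nonlinear term and recall the identity $\|f(\tau)\|_{L^{q/\alpha}_{\sigma}}=\|u(\tau)\|_{L^q_s}^{\alpha}$ with $\sigma:=\alpha s-\gamma$ that was already used in Section 2. Fix a test function $\eta\in C^{1,2}([0,T]\times\R^d)$ whose spatial support lies in a fixed ball, and fix $T'\in[0,T]$.

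First I would check the integrability built into Definition \ref{d:w.sol}, namely $u\in L^{\alpha}(0,T;L^{\alpha}_{\gamma/\alpha,loc}(\R^d))$, which amounts to $\int_0^{T'}\!\int_K|x|^{\gamma}|u|^{\alpha}\,dx\,dt<\infty$ on every compact $K$. Writing $|x|^{\gamma}|u|^{\alpha}=|x|^{\gamma-\alpha s}(|x|^s|u|)^{\alpha}$ and applying H\"older's inequality with exponents $q/\alpha$ and its conjugate, one bounds $\int_K|x|^{\gamma}|u(t)|^{\alpha}\,dx\le C_K\|u(t)\|_{L^q_s}^{\alpha}$, where $C_K<\infty$ because $\sigma<d(1-\alpha/q)$ makes the weight locally integrable. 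The Kato bound $\|u(t)\|_{L^q_s}\le t^{-(s_c-s)/2}\|u\|_{\mathcal K^s(T)}$ (and its subcritical analogue for $\tilde{\mathcal K}^s$) then turns the $t$-integral into a convergent power integral under the standing condition $s_c-2/\alpha<s$. The same computation shows $f\in L^1([0,T']\times K)$, so every integral appearing in \eqref{weak} converges absolutely.

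Next I would record the weak form of the linear flow: for $g$ in a weighted Lebesgue space and $t_0\ge0$, the map $t\mapsto e^{(t-t_0)\Delta}g$ is smooth and solves $\partial_t v=\Delta v$ on $(t_0,\infty)$, so differentiating $t\mapsto\int_{\R^d}e^{(t-t_0)\Delta}g\,\eta(t)\,dx$ and integrating by parts in $x$ (legitimate since $\eta(t,\cdot)$ has compact support) gives
\begin{equation}\nonumber
\int_{\R^d}e^{(T'-t_0)\Delta}g\,\eta(T')\,dx-\int_{\R^d}g\,\eta(t_0)\,dx
=\int_{t_0}^{T'}\!\int_{\R^d}e^{(t-t_0)\Delta}g\,(\Delta\eta+\eta_t)\,dx\,dt,
\end{equation}
where the boundary term at $t=t_0$ uses $e^{(t-t_0)\Delta}g\to g$ in $\mathscr{D}'(\R^d)$ as $t\downarrow t_0$ together with the $C^1$-continuity of $\eta$. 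Taking $t_0=0$ and $g=u_0$ disposes of the linear part $e^{t\Delta}u_0$ of the mild solution.

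Finally I would apply this identity with $t_0=\tau$ and $g=f(\tau)\in L^{q/\alpha}_{\sigma}$ for each $\tau\in(0,T')$ and integrate in $\tau$ over $(0,T')$. The two boundary terms integrate to $\int_{\R^d}N(u)(T')\eta(T')\,dx$ and $\int_0^{T'}\!\int_{\R^d}f\,\eta$ respectively, while the right-hand side becomes $\int_0^{T'}\!\int_{\tau}^{T'}\!\int_{\R^d}e^{(t-\tau)\Delta}f(\tau)(\Delta\eta+\eta_t)\,dx\,dt\,d\tau$; exchanging the $t$- and $\tau$-integrations over the triangle $\{0\le\tau\le t\le T'\}$ recognizes $\int_0^t e^{(t-\tau)\Delta}f(\tau)\,d\tau=N(u)(t)$ and yields $\int_0^{T'}\!\int_{\R^d}N(u)(\Delta\eta+\eta_t)$. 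Adding the linear identity and using $N(u)(0)=0$ and $u=e^{t\Delta}u_0+N(u)$ reproduces \eqref{weak} exactly. The main obstacle is the justification of this Fubini interchange: I would establish absolute convergence of the triple integral by bounding, via Lemma \ref{l:wLpLq} (with the bounded set $K$ absorbing the spatial weight), $\|e^{(t-\tau)\Delta}f(\tau)\|_{L^1(K)}\le C(t-\tau)^{-\beta}\|f(\tau)\|_{L^{q/\alpha}_{\sigma}}=C(t-\tau)^{-\beta}\|u(\tau)\|_{L^q_s}^{\alpha}\le C(t-\tau)^{-\beta}\tau^{-(s_c-s)\alpha/2}$ with $\beta=\tfrac{d(\alpha-1)}{2q}+\tfrac{\sigma-s'}2<1$ for a suitable auxiliary weight $s'$, so that the $(\tau,t)$-integral is a convergent Beta integral precisely under \eqref{t:HH.LWP.c2}. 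The subcritical case is identical, with $\mathcal K^s$ replaced by $\tilde{\mathcal K}^s$ and the convergence conditions read off from the hypotheses of Theorem \ref{t:HH.LWP.sub}.
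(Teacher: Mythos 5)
Your argument is correct, and it reaches the conclusion by a genuinely different route from the paper. The paper handles the nonlinear Duhamel term by approximation: it mollifies $G(t,x)=|x|^{\gamma}F(u(t,x))$ to get $G_j\in C_0^{\infty}([0,\infty)\times\R^d)$, observes that the corresponding $N_j$ are smooth so that integration by parts is classical, and then passes to the limit using the continuity of the Duhamel operator from $\sup_t t^{(s_c-s)\alpha/2}\|\cdot\|_{L^{q/\alpha}_{\sigma}}$ into $\mathcal{K}^s(T)$ (and, for the linear part, the density of $C_0^\infty(\R^d)$ in $L^q_{s_c}(\R^d)$ from Proposition \ref{p:wL.sp}). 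You instead test the linear semigroup identity at each fixed Duhamel time $\tau$, integrate in $\tau$, and exchange the order of integration over the triangle $\{0\le\tau\le t\le T'\}$; the whole burden shifts to the absolute convergence of the resulting triple integral, which you correctly control by the $(t-\tau)^{-\beta}\tau^{-(s_c-s)\alpha/2}$ bound with $\beta<1$ from Lemma \ref{l:wLpLq}, convergent precisely under the standing hypotheses. Your route avoids density and mollification entirely (an advantage at endpoints such as $q=\infty$, where the paper must pass to $\mathcal{L}^\infty_{s_c}$), at the price of justifying the Fubini interchange and the initial trace $e^{(t-\tau)\Delta}g\to g$ as $t\downarrow\tau$; on the latter point, note that $\eta(\tau,\cdot)$ is only $C^2$ with compact support rather than a genuine $\mathscr{D}(\R^d)$ test function, so you should phrase the convergence as $e^{\varepsilon\Delta}g\to g$ in $L^1_{loc}$ (which holds for $g\in L^{q/\alpha}_{\sigma}\subset L^1_{loc}$ with polynomial growth, by splitting off the far field and using Gaussian decay) rather than as convergence in $\mathscr{D}'$. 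Both proofs also begin with the same verification that $u\in L^{\alpha}(0,T;L^{\alpha}_{\gamma/\alpha,loc}(\R^d))$ via H\"older with exponent $q/\alpha$ and the Kato bound; your version of that step is if anything stated more carefully than the paper's.
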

\begin{proof}
We prove the critical case only, since the subcritical case can be treated in the similar manner. Let $T>0$ and $u$ be an $L^q_{s_c}(\R^d)$-mild solution on $[0,T]$. First we prove $u\in L^{\alpha}(0,T;L^{\alpha}_{\frac{\gamma}{\alpha},loc}(\R^d))$.
Let $\Omega\subset \R^d$ be a compact subset of $\R^d$. We also assume that $q>\alpha$ since the case $q=\alpha$ can be treated in the similar manner with a slight modification. Since $s_c-2\le s<(d+\gamma)/\alpha-d/q$, by the H\"older inequality, the following estimates hold: 
\begin{align*}
   \|u\|_{L^{\alpha}(0,T;L^{\alpha}_{\frac{\gamma}{\alpha}}(\Omega))}^{\alpha}
   &\le \int_0^T\left(\int_{\Omega}|x|^{\frac{q(\gamma-\alpha s)}{q-\alpha}}dx\right)^{\frac{q}{q-\alpha}}\|u(t)\|_{L_s^q} \, dt \\
   &\le C\int_0^Tt^{\frac{s-s_c}{2}}\,dt \, \|u\|_{\mathcal{K}^s(T)}<\infty,
\end{align*}
which implies that $u$ belongs to $L_t^{\alpha}(0,T;L^{\alpha}_{\frac{\gamma}{\alpha},loc}(\R^d))$. Next we prove that $u$ satisfies the weak form (\ref{weak}). Let $\eta\in C^{1,2}([0,T]\times\R^d)$ be such that for any $t\in [0,T]$, $\operatorname{supp} \eta(t, \cdot)$ is compact. Let $T'\in (0,T)$. Since $C_0^{\infty}(\R^d)$ is dense in $L^q_{s_c}(\R^d)$ thanks to Proposition \ref{p:wL.sp}, there exists a sequence $\{u_{0j}\}\subset C_0^{\infty}(\R^d)$ such that the following identity holds:
\[
    \lim_{j\rightarrow\infty}\|u_0-u_{0j}\|_{L^q_{s_c}}=0.
\]
By this identity and the integration by parts, we can prove the following identity:
\begin{align*}
   \int_{[0,T']\times\R^d}&(e^{t\Delta}u_0)(x)(\Delta\eta+\partial_t\eta)(t,x)\,dxdt \\
   &=\int_{\R^d}(e^{T'\Delta}u_0)(x)\, \eta(T',x)\,dx-\int_{\R^d}u_0(x)\eta(0,x)\,dx.
\end{align*}
Thus it suffices to prove the identity
\begin{equation}	\label{weak1}
   \int_{[0,T']\times\R^d}N(u(t,x))(\Delta\eta+\partial_t\eta)(t,x) \,dxdt
   =-\int_{[0,T']\times\R^d}|x|^{\gamma}F(u(t,x))\eta(t,x) \,dxdt,
\end{equation}
where $N$ is defined by (\ref{mapN}). We write $G(t,x):=|x|^{\gamma}F(u(t,x))$. Then we can express $N(u)$ as
\[
N(u)=\int_0^te^{(t-\tau)\Delta}G(\tau)\,d\tau.
\]
Moreover, the equality
\[
    \sup_{t\in [0,T]}t^{\frac{(s_c-s)\alpha}{2}}\|G(t)\|_{L_{\sigma}^{\frac{q}{\alpha}}}=\|u\|_{\mathcal{K}^s(T)}^{\alpha}<\infty
\]
is valid, where $\sigma:=\alpha s-\gamma$. Since the time interval $[0,T]$ is compact, by using mollifiers with respect to the time variable and the space variables, we can find $\{G_j\}\subset C_0^{\infty}([0,\infty)\times \R^d)$ such that 
\begin{equation}	\label{appro1}
    \lim_{j\rightarrow\infty}\sup_{t\in [0,T]}t^{\frac{(s_c-s)\alpha}{2}}\|G(t)-G_j(t)\|_{L^{\frac{q}{\alpha}}_{\sigma}}=0.
\end{equation}
We define a sequence $\{N_j\}$ as
\[
    N_j(t,x):=\int_0^{t}e^{(t-\tau)\Delta}G_j(\tau,x) \, d\tau.
\]
In a similar manner as the proof of Theorem \ref{t:HH.LWP}, we can prove that
\[
    \|N_j-N(u)\|_{\mathcal{K}^s(T)}\le C\sup_{t\in [0,T]}t^{\frac{(s_c-s)\alpha}{2}}\|G_j(t)-G(t)\|_{L_{\sigma}^{\frac{q}{\alpha}}}\rightarrow 0
\]
as $j\rightarrow \infty$. By this fact, we deduce that 
\[
   \text{R.H.S of (\ref{weak1})}
   =\lim_{j\rightarrow\infty}\int_{[0,T']\times\R^d} N_j(t,x)(\Delta\eta+\partial_t\eta)(t,x) \,dxdt.
\]
Since $G_j$ is smooth, so is $N_j$ and hence, by the integration by parts, the identity
\[
   \int_{[0,T']\times\R^d}N_j(t,x)(\Delta\eta+\partial_t\eta)(t,x)\,dx\,dt
   	=\int_{[0,T']\times\R^d}G_j(t,x)\eta(t,x) \,dx\,dt.
\]
holds for any $j$. By taking the limit $j\rightarrow\infty$ in the right-hand side and (\ref{appro1}), we have
\[
   \lim_{j\rightarrow\infty}\int_{[0,T']\times\R^d}
   N_j(t,x)(\Delta\eta+\partial_t\eta)(t,x) \,dxdt
   =\int_{[0,T']\times\R^d}G(t,x)\eta(t,x)\,dxdt.
\]
Thus we obtain (\ref{weak1}), which completes the proof of the lemma.
\end{proof}

\section*{Acknowledgement}
\par
The first author is supported by Grant-in-Aid for Young Scientists (B) 
(No. 17K14216) and Challenging Research (Pioneering) (No.17H06199), 
Japan Society for the Promotion of Science. 
The second author is supported by JST CREST (No. JPMJCR1913), Japan and 
the Grant-in-Aid for Scientific Research (B) (No.18H01132) and 
Young Scientists Research (No.19K14581), JSPS.
The third author is supported by Grant-in-Aid for JSPS Fellows 
(No.19J00206), JSPS.

\begin{bibdiv}
 \begin{biblist}[\normalsize]
 

\bib{BenTayWei2017}{article}{
   author={Ben Slimene, B.},
   author={Tayachi, S.},
   author={Weissler, F. B.},
   title={Well-posedness, global existence and large time behavior for
   Hardy-H\'enon parabolic equations},
   journal={Nonlinear Anal.},
   volume={152},
   date={2017},
   pages={116--148},
}

\bib{Ben2019}{article}{
   author={Ben Slimene, B.},
   title={Asymptotically self-similar global solutions for Hardy-H\'{e}non
   parabolic systems},
   journal={Differ. Equ. Appl.},
   volume={11},
   date={2019},
   number={4},
   pages={439--462},
}

\bib{Chi2019}{article}{
   author={Chikami, N.},
   title={Composition estimates and well-posedness for Hardy-H\'{e}non parabolic
   equations in Besov spaces},
   journal={J. Elliptic Parabol. Equ.},
   volume={5},
   date={2019},
   number={2},
   pages={215--250},
}

\bib{CIT-arxiv}{article}{
   author={Chikami, N.},
   author={Ikeda, M.},
   author={Taniguchi, K.},
   title={Well-posedness and global dynamics for the critical Hardy-Sobolev parabolic equation},
   journal={arXiv:2009.07108v2},
   date={2019},
}




\bib{GW-2005}{article}{
   author={Gazzola, F.},
   author={Weth, T.},
   title={Finite time blow-up and global solutions for semilinear parabolic
   equations with initial data at high energy level},
   journal={Differential Integral Equations},
   volume={18},
   date={2005},
   number={9},
   pages={961--990},
}

\bib{GhoMor2013}{book}{
   author={Ghoussoub, N.},
   author={Moradifam, A.},
   title={Functional inequalities: new perspectives and new applications},
   series={Mathematical Surveys and Monographs},
   volume={187},
   publisher={American Mathematical Society, Providence, RI},
   date={2013},
   pages={xxiv+299},
}


\bib{Gig86}{article}{
   author={Giga, M.},
   title={Solutions for semilinear parabolic equations in $L^p$ and
   regularity of weak solutions of the Navier-Stokes system},
   journal={J. Differential Equations},
   volume={62},
   date={1986},
   number={2},
   pages={186--212},
}




\bib{H-1973}{article}{
   author={H\'enon, M.},
   title={Numerical experiments on the stability of spherical stellar systems},
   journal={Astron. Astrophys},
   volume={24},
   date={1973},
   pages={229--238},
}

\bib{Hir2008}{article}{
   author={Hirose, M.},
   title={Existence of global solutions for a semilinear parabolic Cauchy
   problem},
   journal={Differential Integral Equations},
   volume={21},
   date={2008},
   number={7-8},
   pages={623--652},
}

\bib{HisIsh2018}{article}{
   author={Hisa, K.},
   author={Ishige, K.},
   title={Existence of solutions for a fractional semilinear parabolic
   equation with singular initial data},
   journal={Nonlinear Anal.},
   volume={175},
   date={2018},
   pages={108--132},
}

\bib{HisTak-arxiv}{article}{
   author={Hisa, K.},
   author={Takahashi, J.},
   title={Optimal singularities of initial data for solvability of the Hardy parabolic equation},
   journal={arXiv:2102.04618},
   date={2021},
}

\bib{II-15}{article}{
   author={Ikeda, M.},
   author={Inui, T.},
   title={Some non-existence results for the semilinear Schr\"odinger equation without gauge invariance},
   journal={J. Math. Anal. Appl.},
   volume={425},
   date={2015},
   pages={758--773},
}

\bib{IT-arxiv}{article}{
   author={Ikeda, M.},
   author={Taniguchi, K.},
   title={Global well-posedness, dissipation and blow up for semilinear heat equations in energy spaces associated with self-adjoint operators},
   journal={arXiv:1902.01016v3},
   date={2019},
}




\bib{Ish2008}{article}{
   author={Ishiwata, M.},
   title={Asymptotic behavior of strong solutions for nonlinear parabolic
   equations with critical Sobolev exponent},
   journal={Adv. Differential Equations},
   volume={13},
   date={2008},
   number={3-4},
   pages={349--366},
}

\bib{Maj-arxiv}{article}{
   author={Majdoub, M.},
   title={Well-posedness and blow-up for an inhomogeneous semilinear parabolic equation},
   journal={arXiv:2008.01290v3},
   date={2021},
}

\bib{NakTomYab2004}{article}{
   author={Nakai, E.},
   author={Tomita, N.},
   author={Yabuta, K.},
   title={Density of the set of all infinitely differentiable functions with
   compact support in weighted Sobolev spaces},
   journal={Sci. Math. Jpn.},
   volume={60},
   date={2004},
   number={1},
}


\bib{OkaTsu2016}{article}{
   author={Okabe, T.},
   author={Tsutsui, Y.},
   title={Navier-Stokes flow in the weighted Hardy space with applications
   to time decay problem},
   journal={J. Differential Equations},
   volume={261},
   date={2016},
   number={3},
   pages={1712--1755},
}

\bib{Ouh2005}{book}{
   author={Ouhabaz, E. M.},
   title={Analysis of heat equations on domains},
   series={London Mathematical Society Monographs Series},
   volume={31},
   publisher={Princeton University Press, Princeton, NJ},
   date={2005},
}

\bib{Pin1997}{article}{
   author={Pinsky, R.~G.~},
   title={Existence and nonexistence of global solutions for $u_t=\Delta
   u+a(x)u^p$ in ${\bf R}^d$},
   journal={J. Differential Equations},
   volume={133},
   date={1997},
   number={1},
   pages={152--177},
   issn={0022-0396},
}



\bib{Qi1998}{article}{
   author={Qi, Y.},
   title={The critical exponents of parabolic equations and blow-up in ${\bf
   R}^n$},
   journal={Proc. Roy. Soc. Edinburgh Sect. A},
   volume={128},
   date={1998},
   number={1},
   pages={123--136},
}




\bib{SnoTayWei2001}{article}{
   author={Snoussi, S.},
   author={Tayachi, S.},
   author={Weissler, F. B.},
   title={Asymptotically self-similar global solutions of a general
   semilinear heat equation},
   journal={Math. Ann.},
   volume={321},
   date={2001},
   number={1},
   pages={131--155},
}




\bib{Tay2020}{article}{
   author={Tayachi, S.},
   title={Uniqueness and non-uniqueness of solutions for critical
   Hardy-H\'{e}non parabolic equations},
   journal={J. Math. Anal. Appl.},
   volume={488},
   date={2020},
   number={1},
}

\bib{Tsu2011}{article}{
   author={Tsutsui, Y.},
   title={The Navier-Stokes equations and weak Herz spaces},
   journal={Adv. Differential Equations},
   volume={16},
   date={2011},
   number={11-12},
   pages={1049--1085},
}

\bib{Tsu2014}{article}{
   author={Tsutsui, Y.},
   title={An application of weighted Hardy spaces to the Navier-Stokes
   equations},
   journal={J. Funct. Anal.},
   volume={266},
   date={2014},
   number={3},
   pages={1395--1420},
}



\bib{Wan1993}{article}{
   author={Wang, X.},
   title={On the Cauchy problem for reaction-diffusion equations},
   journal={Trans. Amer. Math. Soc.},
   volume={337},
   date={1993},
   number={2},
   pages={549--590},
}

\bib{Wei1979}{article}{
   author={Weissler, F. B.},
   title={Semilinear evolution equations in Banach spaces},
   journal={J. Functional Analysis},
   volume={32},
   date={1979},
   number={3},
   pages={277--296},
}

\bib{Wei1980}{article}{
   author={Weissler, F. B.},
   title={Local existence and nonexistence for semilinear parabolic
   equations in $L^{p}$},
   journal={Indiana Univ. Math. J.},
   volume={29},
   date={1980},
   number={1},
   pages={79--102},
}

 \end{biblist}
\end{bibdiv} 

\end{document}